\documentclass[11pt,a4paper]{amsart}
\usepackage{color}
\usepackage{pstricks-add}\usepackage{tikz}\usepackage{pst-func}
\usepackage[all,cmtip]{xy}
\usepackage[USenglish,british,american,australian]{babel}
\usepackage[scale=0.75]{geometry}

\usepackage{amsfonts, amsmath, latexsym, amsthm, amssymb}
\usepackage{latexsym}
\usepackage[square,sort,comma,numbers]{natbib}
\usepackage{pgf,tikz}
\usepackage{mathrsfs}
\usetikzlibrary{arrows}

\newtheorem{MainTeo}{Main Theorem}
\newtheorem{Teo}{Theorem}
\newtheorem{Lema}[Teo]{Lemma}
\newtheorem{Prop}[Teo]{Proposition}
\newtheorem{Cor}[Teo]{Corollary}

\newtheorem{Remark}{Remark}

\newcommand{\calm}{{\mathcal M}}

\newcommand{\calo}{{\mathcal O}}

\newcommand{\calb}{{\mathcal B}}
\newcommand{\calr}{{\mathcal R}}
\newcommand{\cali}{{\mathcal I}}
\newcommand{\h}{\mathrm{H}}
\newcommand{\into}{\hookrightarrow}
\newcommand{\onto}{\twoheadrightarrow}
\DeclareMathOperator{\s}{\mathcal{S}}

\DeclareMathOperator{\Hom}{Hom}
\DeclareMathOperator{\mult}{\mathrm{mult}}

\DeclareMathOperator{\coker}{coker}

\newcommand{\sing}{\operatorname{Sing}}
\newcommand{\PP}{\mathbb{P}^3}

\newcommand{\lhom}{{\mathcal H}{\it om}}
\newcommand{\lext}{{\mathcal E}{\it xt}}
\newcommand{\calt}{{\mathcal T}}

\DeclareMathOperator{\ext}{Ext}
\DeclareMathOperator{\rk}{{rk}}
\DeclareMathOperator{\op3}{\mathcal{O}_{\mathbb{P}^3}}
\DeclareMathOperator{\supp}{\mathrm{Supp}}

\usepackage[
pdfauthor={Charles}
pdftitle={Report I}
pdfdisplaydoctitle=true,
pdfstartview={FitH},
bookmarks=false,
]{hyperref}

\title[Moduli space of rank 2 sheaves of odd degree]{Irreducible components of the moduli space of rank 2 sheaves of odd determinant on the projective space}

\author[C. Almeida]{Charles Almeida}
\address{IME-USP \\
Departamento de Matem\'atica \\ Rua do Mat\~ao, 1010 \\
05508-090 S\~ao Paulo-SP, Brazil}
\email{charlesalmd@ime.usp.br}

\author[M. Jardim]{Marcos Jardim}
\address{IMECC - UNICAMP \\
Departamento de Matem\'atica \\ Rua S\'ergio Buarque de 
Holanda, 651 \\
13083-970 Campinas-SP, Brazil}
\email{jardim@ime.unicamp.br}

\author[A. S. Tikhomirov]{Alexander S. Tikhomirov}
\address{Department of Mathematics\\
National Research University 
Higher School of Economics\\
6 Usacheva Street\\ 
119041 Moscow, Russian Federation}
\email{astikhomirov@mail.ru}
\begin{document}

\begin{abstract}
We describe new irreducible components of the moduli space of 
rank $2$ semistable torsion free sheaves on the 
three-dimensional projective space whose generic point 
corresponds to non-locally free sheaves whose singular locus
is either 0-dimensional or consists of a line plus disjoint 
points. 
In particular, we prove that the moduli spaces of semistable 
sheaves with Chern classes $(c_1,c_2,c_3)=(-1,2n,0)$ and 
$(c_1,c_2,c_3)=(0,n,0)$ always contain at least one rational 
irreducible component. As an application, 
we prove that the number of such components grows as the 
second Chern class grows, and compute the exact number of 
irreducible components of the moduli spaces of rank 2 
semistable torsion free sheaves with Chern classes 
$(c_1,c_2,c_3)=(-1,2,m)$ for 
all possible values for $m$; all components turn out to be 
rational. Furthermore, we also prove that these moduli spaces 
are connected, showing that some of sheaves here considered 
are smoothable.
\end{abstract}

\maketitle
\tableofcontents 

\section{Introduction}

Following the proof of existence of a projective moduli 
scheme parametrizing S-equivalence classes of semistable 
sheaves on a projective variety by Maruyama \cite{Maruyama}, 
the study of the geometry of such moduli spaces has been a 
central topic of research within algebraic geometry. Although 
a lot is known for curves and surfaces, general results for 
three dimensional varieties are still lacking. In fact, 
moduli spaces of sheaves on 3-folds turn out to be quite 
complicated spaces (as it is illustrated by Vakil's Murphy's 
law \cite{V}), particularly with several irreducible 
components of various dimensions.

The goal of this paper is to advance on the study of the 
moduli space of semistable rank 2 sheaves on $\PP$ with fixed 
Chern classes $(c_1,c_2,c_3)$, which we will denote by 
$\calm(c_1,c_2,c_3)$. Additionally, we will also consider the 
open subset consisting of stable reflexive sheaves, denoted 
by $\calr(c_1,c_2,c_3)$; when $c_3=0$, this is actually the 
moduli space of stable locally free sheaves, and this will be 
denoted by $\calb(c_1,c_2)$. Questions on the geometry of 
such spaces, such as connectedness, or the number of 
irreducible components, seem to be less explored if compared 
to the study of the geometry of the Hilbert schemes of curves 
in the projective $3$-space for instance; some results for 
Hilbert schemes of curves can be found in \cite{H1966,KO2015,
K2017,N1997,NS2001}.

A rich literature on these moduli spaces was produced, 
especially in the 1980's and 1990's, studying 
$\calr(c_1,c_2,c_3)$ and $\calb(c_1,c_2)$ for specific values 
of the Chern classes. For instance, the geometry of 
$\calb(0,c_2)$ and $\calb(-1,c_2)$ is completely understood 
for $c_2$ up to $5$, see \cite{Barth1,H1978,C1983,ES1981,
AJTT2017} for $c_1=0$, and \cite{Hart2,Banica}  for $c_1=-1$. 
In addition, Ein characterized a infinite series of 
irreducible components of $\calb(c_1,c_2)$ and proved that 
the number of irreducible components of $\mathcal{B}(c_1,c_2
)$ goes to infinity as the $c_2$ goes to infinity \cite{Ein}. 

Regarding reflexive sheaves, $\calr(c_1,c_2,c_3)$ is known 
for $c_2\le3$ and all possible values for $c_3$, see 
\cite{Chang} and the references therein. Some extremal values 
are also known, namely, $\calr(-1, c_2, c_2^2)$ was studied 
by Hartshorne in \cite{Harshorne-Reflexive}, Chang described  
$\calr(0, c_2, c_2^2 -2c_2 +4)$ in \cite{Chang2}, while 
Mir\'o-Roig studied  $\calr(-1;c_2;c_2^2-2c_2 +4)$ in 
\cite{Maria-Miro2}, and the moduli spaces $\calr(-1, c_2, 
c_2^2 - 2rc_2 +2r(r+1))$  for $1 \leq r \leq (-1 + \sqrt{4c_2 
-7})\slash 2$, and $c_2$ greater than $5$, and $\calr(-1, 
c_2, c_2^2 -2(r-1)c_2 )$ for $c_2$ greater than $8$ in 
\cite{Maria-Miro}.

Even less is known for torsion free sheaves. Okonek and 
Spindler proved in \cite{OS1985} that 
$\calm(0,c_2,c_2^2-c_2+2)$ and $\calm(-1,c_2,c_2^2)$ are 
irreducible for $c_2 \geq 6$. For small values of $c_2$, 
Mir\'o-Roig and Trautmann proved in \cite{Miro-Trautmann} 
that $\calm(0,2,4)$ is irreducible, while Le Potier showed in 
\cite[Chapter 7]{LeP1993} that $\calm(0,2,0)$ has exactly $3$ 
irreducible rational components; more recently, it was shown
in \cite{JMT} that $\calm(0,2,0)$ is connected. Trautmann has 
also argued that $\calm(0,2,2)$ has exactly $2$ irreducible 
components \cite{Tr}. More recently, Schmidt proved in 
\cite{S2018}, that $\calm(0,c_2,c_2^2-c_2+2)$ and 
$\calm(-1,c_2,c_2^2)$ are irreducible for any $c_2 \geq 0$, 
using methods different from the ones employed by Okonek and Spindler and by Mir\'o-Roig and Trautmann. 

$\calm(0,c_2,0)$ for $c_2\ge2$ was studied in \cite{JMT}, 
where new infinite series of irreducible components are 
described. The starting point is the identification of three 
different types of torsion free sheaves; more precisely, let 
$E$ be a torsion free sheaf on $\PP$, and set $Q_E := E^{\vee 
\vee} \slash E$, which we assume to be nontrivial; we have 
the following fundamental sequence
\begin{equation}\label{fundamental}
0 \to E \to E^{\vee \vee} \to Q_E \to 0
\end{equation}
and say that $E$ has
\begin{itemize}
\item \textit{0-dimensional singularities} if $\dim Q_E =0$;
\item \textit{1-dimensional singularities} if $Q_E$ has pure 
dimension 1;
\item \textit{mixed singularities} if $\dim Q_E =1$, but 
$Q_E$ is not pure.
\end{itemize}

With this definition in mind, a systematic way of producing 
examples of irreducible components of $\calm(0,c_2,0)$ whose 
generic point corresponds to a torsion free sheaf with 
$0$-dimensional and $1$-dimensional singularities is given in 
\cite{JMT}. Furthermore, the third author and Ivanov 
\cite{IT} constructed irreducible components of 
$\calm(0,3,0)$ whose generic point corresponds to a torsion 
free sheaf with mixed singularities. Additionally, in a 
recent paper \cite{I2019}, Ivanov proved that $\calm(0,3,0)$ 
has at least 11 irreducible components. 

Our first goal in this paper is to generalize the results 
presented in \cite{IT,JMT}, and show how to produce 
irreducible components of $\calm(c_1,c_2,c_3)$, for values of 
$c_1$, $c_2$ and $c_3$ also including cases with $c_1=-1$ and 
$c_3 \neq 0$, for sheaves with $0$-dimensional, 
$1$-dimensional, and mixed singularities. More precisely, we 
prove the following two statements.

\begin{MainTeo}\label{main1}
For each $e \in \{-1,0\}$, let $n$ and $m$ be positive 
integers such that $en \equiv m (\mathrm{mod~2})$. Let 
$\mathcal{R}^{*}$ be a nonsingular, irreducible component of 
$\mathcal{R}(e,n,m)$ of expected dimension $8n-3+2e$.
\begin{itemize}
\item[(i)] For each $l\ge1$, there exists an irreducible component 
$$ \mathrm{T}(e,n,m,l)\subset \mathcal{M}(e,n,m-2l) $$
of dimension $8n-3+2e+4l$ whose generic sheaf $[E]$ satisfies 
$[E^{\vee\vee}]\in\mathcal{R}^{*}$ and 
$\mathrm{length}(Q_E)=l$.
\item[(ii)] For each $r\ge2$ and $s\ge1$ such that $2r+2s\le 
m+e+2$, or $r=1$ and $s=0$ when $-e=n=m=1$, there exists an 
irreducible component 
$$ \mathrm{X}(e,n,m,r,s)\subset\calm(e,n+1,m+2+c_1-2r-2s)$$
of dimension of dimension $8n+4s+2r+2+e$, whose generic sheaf 
$[E]$ satisfies $[E^{\vee\vee}]\in\mathcal{R}^{*}$ and $Q_E$ 
is supported on a line plus $s$ points.
\end{itemize}
\end{MainTeo}

The case $e=0$ of the first part of the previous theorem is 
just \cite[Theorem 7]{JMT}; we prove here the case $e=-1$. 
The second part is a generalization of \cite[Theorem 3]{IT}, 
which covers the cases $e=0$, $n=2$, $m=2,4$. 

Our second goal in this paper concerns the problem of 
rationality of irreducible components of the moduli spaces 
$\calm(e,n,m)$.
The study of this problem for the moduli components of 
locally free sheaves, which are contained in $\calm(-1,2n,0)$ 
and $\calm(0,n,0),\ n\ge1$, dates back to late 70-ies and 
early 1980-ies. The rationality of these
moduli components was proved for $n\le 3$ in case $e=0$
\cite{Barth1,H1978,ES1981,Hart2}. The first infinite series of
rational moduli components were constructed and studied in 
\cite{BH,ES1981,V1,V2}. Recently, A. Kytmanov, A. Tikhomirov 
and S. Tikhomirov \cite{KTT} showed that there is a large 
infinite series of rational moduli components of locally free 
sheaves from $\calm(-1,2n,0)$ and $\calm(0,n,0)$ which 
includes the above mentioned series. These are the so-called 
Ein components which were first found and studied by 
A. P. Rao \cite{Rao} and L. Ein \cite{Ein}. However, it is 
still an open question whether these components exist for every 
$n$ sufficiently (there are gaps for some small values of $n$,  
see \cite{KTT} for details). One of the central results of our 
paper states that, for any $n\ge1$ there exist
rational irreducible components of $\calm(-1,2n,0)$ and of
$\calm(0,n,0)$.  The precise statement
is given by the following theorem. 

\begin{MainTeo}\label{main2}
~~\newline 
\begin{itemize}
\item[(i)] For any $n\ge1$, the scheme $\calm(-1,2n,0)$ contains 
at least one rational, generically reduced, irreducible 
component with generic sheaf having 0-dimen-sional 
singularities. For any $n\ge3$, $\calm(-1,2n,0)$ 
contains at least one rational, generically reduced, 
irreducible component with generic sheaf having purely 
1-dimensional singularities, respectively, at least 
$2(n^2-n-1)$ rational, generically reduced, irreducible 
components with generic sheaves having singularities of 
mixed dimension.

\item[(ii)] For any $n\ge2$, the scheme $\calm(0,n,0)$ contains at least one rational, generically reduced, irreducible component 
with generic sheaf having 0-dimensional singularities. For 
any $n\ge3$, the scheme $\calm(0,n,0)$ contains at least one
rational, generically reduced, irreducible component with 
generic sheaf having purely 1-dimensional singularities. For 
any $n\ge4$, the scheme $\calm(0,n,0)$ contains at least 
$\frac{n(n-3)}{2}$ rational, generically reduced, irreducible
components with generic sheaves having singularities of 
mixed dimension.
\end{itemize}
\end{MainTeo}

In addition, we also show that $\calm(e,n,m)$ has rational irreducible components for $e=-1,0$ and $n,m$ varying in a wide range (see Theorem \ref{Tmain2} for the precise statement).

The proof of this theorem is based on the above 
mentioned results of Chang \cite{Chang2}, Mir\'o-Roig 
\cite{Maria-Miro}, Okonek--Spindler \cite{OS1985} and
Schmidt \cite{S2018} on reflexive sheaves, and uses 
elementary transformations of reflexive
sheaves along finite sets of points.

We give two applications of our constructions. First, we 
prove that the number of irreducible components of 
$\calm(-1,n,0)$ whose generic point corresponds to a sheaf 
with mixed singularities grows as $n$ grows, see Theorem 
\ref{einmixed} below. Second, we provide a full description 
of the irreducible components of $\calm(-1,2,m)$.

\begin{MainTeo}\label{main3}
The moduli spaces $\calm(-1,2,m)$ are connected and
\begin{itemize}
\item[(i)] $\calm(-1,2,4)$ is irreducible and rational of 
dimension 11;
\item[(ii)] $\calm(-1,2,2)$ is connected and has exactly $2$ 
irreducible rational components of dimensions 11, and 15;
\item[(iii)] $\calm(-1,2,0)$ is connected and has exactly $4$ 
irreducible rational components of dimensions 11, 11, 15, and 
19. 
\end{itemize}
\end{MainTeo}

Note that the rationality of all of these components of 
follows directly from Main Theorem \ref{main2}. 

\begin{figure}[ht] \label{figure} 
\includegraphics[scale=1]{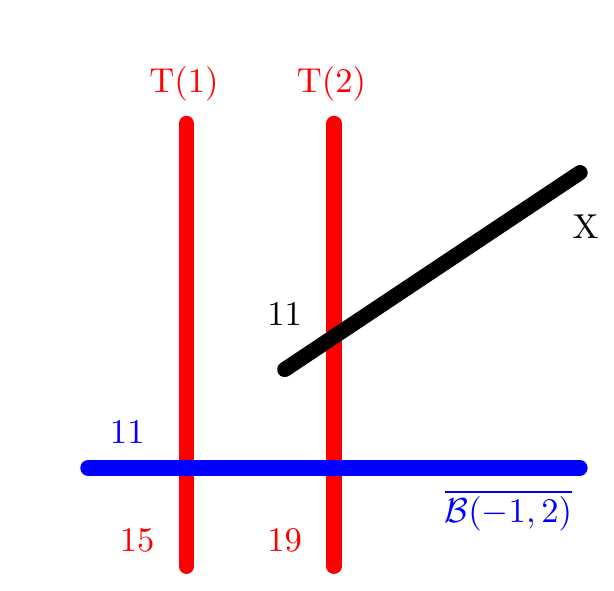}
\caption{This is a representation of the geography of the 
moduli space $\calm(-1,2,0)$. Each segment represents one of 
the irreducible components of $\calm(-1,2,0)$ and it is 
accompanied by its dimension. The blue component is the 
closure of $\calb(-1,2)$. The red components are of the type 
described in the first part of Main Theorem 1, namely 
$\mathrm{T}(l)=\mathrm{T}(-1,2,2l,l)$. The black component is 
of the type described in the second part of Main Theorem 1, 
namely $\mathrm{X}=\mathrm{X}(-1,1,1,1,0)$. The intersection 
of lines indicate when the corresponding components 
intersect.}
\end{figure}

We emphasize that proving that these moduli spaces are 
connected is quite relevant, since it is not known whether 
moduli spaces of rank 2 sheaves are in general connected, as 
it is the case for Hilbert schemes. In addition, we also 
provide very concrete descriptions of the generic points in 
each irreducible component; for a more detailed statement, 
see Theorems \ref{M(-1,2,2)} and \ref{M(-1,2,0)} for the 
cases $c_3=2$ and $c_3=0$, respectively.  A representation of 
the geography of $\calm(-1,2,0)$ is presented in Figure 
\ref{figure}, showing how the various irreducible components 
intersect one another.

Another important aspect of the proof of the connectedness 
part of Main Theorem \ref{main3} is that we are implicitly 
showing that some of the sheaves presented in Main Theorem 
\ref{main1} are smoothable. To be more precise, a semistable 
non locally free sheaf with $c_3=0$ is said to be 
\emph{smoothable} if it can be deformed into a stable locally 
free sheaf, that is, if it lies in the closure of an 
irreducible component of $\calb(c_1,c_2)$ within 
$\calm(c_1,c_2,0)$. In the observations following the proof 
of Theorem \ref{connected2} we provide certain sufficient 
conditions for smoothability of sheaves in $\calm(-1,2,0)$.

\bigskip

The paper is organized as follows. In Section \ref{First 
Computations} we build up some basic techniques, and 
preliminary results. We compute the dimensions of the Ext 
groups of torsion free sheaves in terms of their Chern 
classes, and use it in Section \ref{New Irreducible 
Components} in order to produce the examples of irreducible 
components of the moduli space of torsion free sheaves, and 
to prove Main Theorem \ref{main1}. These results are then 
explored in end of Section \ref{New Irreducible Components} 
to prove that the number of irreducible components of 
$\calm(c_1,c_2,0)$ whose generic point correspond to a sheaf 
with mixed singularities goes to infinity as $c_2$ goes to 
infinity, thus providing our first application. Main Theorem
\ref{main2} is proved in Section \ref{Rational Components}.

The remainder of the paper is occupied with the proof of Main
Theorem \ref{main3}.  
The irreducibility of $\calm(-1,2,4)$ is established in 
Section \ref{irreducible of M(-1,2,4)}. After further 
technical results in Sections \ref{irreducible of M(2)} and 
\ref{descr of X} regarding the families of sheaves introduced 
in Main Theorem \ref{main1}, we dedicate Sections 
\ref{irreducible of M(-1,2,2)} and \ref{irreducible of 
M(-1,2,0)} to describing all irreducible components of 
$\calm(-1,2,m)$ for $c_3=2$ and $c_3=0$, respectively. The 
connectedness of $\calm(-1,2,m)$ is finally established in 
Section \ref{Connectedness of M(2)}.

\bigskip

\noindent{\bf Acknowledgements.}
This work started with discussions among the authors during a visit to SISSA in May 2016; we thank Ugo Bruzzo and SISSA for its support and hospitality. CA was supported by the FAPESP grants number 2014/08306-4 and 2016/14376-0; part of this work was made when he was visiting the University of Barcelona, and he is grateful for its warm hospitality, also thanking Rosa Maria Mir\'o Roig for the several useful discussions on this topic. MJ is partially supported by the CNPq grant number 302889/2018-3 and the FAPESP Thematic Project 2018/21391-1; part of this work was done during a visit to the University of Edinburgh, and later completed during a visit to the Simons Center for Geometry and Physics; MJ is grateful for the hospitality of both institutions. AT was supported by funding within the framework of the State Maintenance Program for the Leading Universities of the Russian Federation "5-100". AT also thanks the Max Planck Institute for Mathematics in Bonn for hospitality, where this work was partially done during the winter of 2017. This work was also partially funded by CAPES - Finance Code 001.

\section{First computations}\label{First Computations}

In order to study the moduli spaces of torsion free sheaves 
on $\PP$ we will need an explicit method to compute $\dim 
\ext^1(E,E)$, which gives us the dimension of the tangent 
space of the isomorphism class of a stable torsion free sheaf 
$E$ as a point the moduli space. Our main goal in this 
section is to prove the following theorem. 

\begin{Teo}\label{dimext1}
Let $E$ be a stable rank 2 torsion free sheaf on $\PP$ with
$e:=c_1(E)\in\{-1,0\}$. Then
$$\dim\ext^1(E,E)-\dim\ext^2(E,E)=8c_2(E)-3-2c_1(E)^2=8c_2(E)-
3-2e.$$
\end{Teo}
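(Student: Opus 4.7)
I would prove the formula by computing the Euler pairing $\chi(E,E):=\sum_{i=0}^{3}(-1)^i\dim\ext^i(E,E)$ via Hirzebruch--Riemann--Roch and then isolating the combination $\dim\ext^1(E,E)-\dim\ext^2(E,E)$ by pinning down the outer terms through stability and Serre duality.

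Because $E$ is stable, $\End(E)$ is a finite-dimensional division algebra over the ground field, so $\dim\Hom(E,E)=1$. Since $\PP$ is smooth, Serre duality holds for arbitrary coherent sheaves and gives
\[
\ext^{3}(E,E)\;\simeq\;\Hom(E,E(-4))^{\vee}.
\]
A nonzero morphism $\varphi\colon E\to E(-4)$ would have torsion-free image $\Ima\varphi$, which is simultaneously a quotient of the stable $E$ and a subsheaf of the stable $E(-4)$. If $\rk\Ima\varphi=1$, stability would force $\mu(E)<\mu(\Ima\varphi)<\mu(E(-4))$, i.e.\ $e/2<(e-8)/2$, an absurdity. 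If $\rk\Ima\varphi=2$, then $\varphi$ must be injective (torsion-freeness of $E$ kills its kernel) and the cokernel is a torsion sheaf with first Chern class $c_1(E(-4))-c_1(E)=-8$, impossible since $c_1$ of a torsion sheaf on $\PP$ is effective. Hence $\dim\ext^{3}(E,E)=0$.

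The remaining ingredient is Hirzebruch--Riemann--Roch: every coherent sheaf on $\PP$ admits a finite locally free resolution, so
\[
\chi(E,E)\;=\;\int_{\PP}\mathrm{ch}(E^{\vee})\cdot\mathrm{ch}(E)\cdot\mathrm{td}(\PP).
\]
Using $\mathrm{ch}(E)=2+eH+\tfrac12(e^2-2c_2)H^2+\tfrac16(e^3-3ec_2+3c_3)H^3$, the expression for $\mathrm{ch}(E^{\vee})$ obtained by negating the odd-degree components, and $\mathrm{td}(\PP)=1+2H+\tfrac{11}{6}H^2+H^3$, the odd-degree parts of the product $\mathrm{ch}(E)\cdot\mathrm{ch}(E^{\vee})$ cancel; the product reduces to $4+(c_1(E)^2-4c_2(E))H^2$ modulo $H^4$, and extracting the $H^3$-coefficient after multiplication by $\mathrm{td}(\PP)$ yields $\chi(E,E)=4+2c_1(E)^2-8c_2(E)$ (in particular the $c_3$ contribution drops out). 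Combining with $\dim\Hom(E,E)=1$ and $\dim\ext^{3}(E,E)=0$ gives
\[
\dim\ext^1(E,E)-\dim\ext^2(E,E)\;=\;1-\chi(E,E)\;=\;8c_2(E)-3-2c_1(E)^2,
\]
which is the asserted identity after noting that $c_1(E)^2\in\{0,1\}$ when $e\in\{-1,0\}$.

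The only genuinely delicate step is the vanishing of $\ext^3(E,E)$, where torsion-freeness of $E$ on a three-fold plays an essential role via the case analysis on $\rk\Ima\varphi$; everything else is routine Chern-character bookkeeping.
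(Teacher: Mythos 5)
Your proof is correct, but it follows a genuinely different route from the paper. You compute the Euler pairing $\chi(E,E)=\sum_i(-1)^i\dim\ext^i(E,E)$ in one stroke via a finite locally free resolution and Hirzebruch--Riemann--Roch, observing that the odd-degree parts of $\mathrm{ch}(E)^{\vee}\cdot\mathrm{ch}(E)$ cancel so that $\chi(E,E)=4+2c_1^2-8c_2$ uniformly for every coherent sheaf of these Chern classes (and in particular independently of $c_3$ and of the nature of $\sing(E)$). The paper instead establishes the same identity $\chi(E,E)=-8c_2+4+2c_1^2$ (Lemma \ref{Lemamixed}) by a case analysis on the singularity type of $E$: the $0$- and $1$-dimensional cases are quoted from \cite{JMT} and from Hartshorne's computation for sheaves of homological dimension $1$, and the mixed case is reduced to the pure $1$-dimensional case via the elementary modification \eqref{puremixed} together with the local Euler-characteristic bookkeeping of Lemma \ref{Technic} and the local-to-global spectral sequence of Lemma \ref{exts general}. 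Your approach is shorter, more elementary, and strictly more general, at the cost of not producing the auxiliary local information (e.g.\ the identification of $\lext^2(E,E)$ with $\lext^3(Q_E,E)$ and the structure of the spectral sequence) that the paper reuses later, for instance in Proposition \ref{important} and Theorem \ref{0dcomp}. The two ingredients you share with the paper, $\dim\Hom(E,E)=1$ and $\ext^3(E,E)\simeq\Hom(E,E(-4))^{\vee}=0$, are handled correctly; note only that Gieseker stability gives non-strict slope inequalities $\mu(E)\le\mu(\Ima\varphi)\le\mu(E(-4))$ rather than the strict ones you wrote, which still yields the contradiction $e/2\le(e-8)/2$, so nothing is lost.
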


Note that this result generalizes \cite[Lemma 5d)]{JMT} and 
\cite[Lemma 10]{JMT}, which establish the formula above for 
stable rank 2 torsion free sheaves with 0- and 1-dimensional 
singularities, respectively, in the case $c_1(E)=0$. The 
proofs for sheaves with 0- and 1-dimensional singularities 
with arbitrary $c_1$ are quite similar to the one in 
\cite{JMT}; therefore, we only include here the proof for 
sheaves with mixed singularities. 
 
\noindent Theorem \ref{dimext1} together with the deformation theory 
yields
\begin{Cor}\label{dim calm}
Any irreducible component of the moduli space 
$\calm(e,c_2,c_3)$ has dimension at least $8c_2-3+2e$.
\end{Cor}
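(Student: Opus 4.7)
The plan is to derive the corollary as a direct application of standard deformation theory for coherent sheaves on $\PP$, combined with Theorem \ref{dimext1}. Let $Z\subseteq\calm(e,c_2,c_3)$ be an irreducible component, and choose a stable representative $[E]\in Z$ (which exists for the components of interest in this paper, since the loci of stable sheaves are open in the moduli and meet each component under consideration).

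The next step is to invoke the general deformation theory of coherent sheaves on a smooth projective variety (see, e.g., Huybrechts--Lehn, \emph{Geometry of moduli spaces of sheaves}, Theorem 4.5.4): the Zariski tangent space to $\calm(e,c_2,c_3)$ at $[E]$ is canonically identified with $\ext^1(E,E)$, while all obstructions to deformations of $E$ lie in $\ext^2(E,E)$. Writing $\calm$ locally near $[E]$ as the zero locus of at most $\dim\ext^2(E,E)$ functions on a smooth germ of dimension $\dim\ext^1(E,E)$, the principal ideal theorem yields
$$\dim_{[E]} Z \ \geq\ \dim\ext^1(E,E)-\dim\ext^2(E,E).$$

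Finally, apply Theorem \ref{dimext1}, which evaluates the right-hand side as $8c_2-3+2e$ (using that $c_1(E)^2=e^2=-e$ for $e\in\{-1,0\}$, so that the formula $8c_2-3-2c_1(E)^2$ of the theorem coincides with $8c_2-3+2e$). Since $\dim Z\geq\dim_{[E]}Z$, the desired lower bound $\dim Z\geq 8c_2-3+2e$ follows at once.

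There is no genuine obstacle: this is a textbook application of deformation theory, with all the substantive input carried by Theorem \ref{dimext1}. The only mild point worth flagging is that one must be able to choose a \emph{stable} $[E]$ in the given component for the tangent/obstruction identification to apply as stated; this is harmless for the components constructed and studied in the sequel of the paper.
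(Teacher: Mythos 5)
Your proposal is correct and is essentially the argument the paper intends: the Corollary is stated as following from Theorem \ref{dimext1} "together with the deformation theory," which is exactly the tangent-obstruction estimate $\dim_{[E]}\calm\ge\dim\ext^1(E,E)-\dim\ext^2(E,E)$ that you invoke. Your remark about choosing a stable representative is a sensible precaution (automatic for $e=-1$ by oddness of the determinant), but otherwise there is nothing to add.
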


\begin{Lema}\label{exts general}
If $E$ is a torsion free sheaf on $\PP$, then:
\begin{itemize}
\item[(i)] $\ext^1(E,E) = \h^1(\lhom(E,E))\oplus\ker 
d^{01}_2$;
\item[(ii)] $\ext^2(E,E) = \ker d^{02}_3 \oplus \ker d^{11}_2 
\oplus \coker d^{01}_2$;
\item[(iii)] $\ext^3(E,E) = \coker d^{02}_3$.
\end{itemize}
Here, $d^{pq}_j$ are the differentials in the j-th page of 
the spectral sequence for local to global ext's 
$E^{pq}_2:=\h^p(\lext^q(E,E))$. In particular, we have
$$ \sum_{j=0}^{3}(-1)^j\dim \ext^j(E,E) = \chi(\lhom(E,E)) - 
\chi(\lext^1(E,E)) + h^0(\lext^2(E,E)). $$
\end{Lema}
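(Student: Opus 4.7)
My plan is to apply the local-to-global Ext spectral sequence $E_2^{p,q}=H^p(\lext^q(E,E))\Rightarrow \ext^{p+q}(E,E)$ and exploit dimension/vanishing bounds to collapse it down to a handful of nontrivial differentials.

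First I would record the vanishings that control the $E_2$ page. Since $\PP$ is smooth of dimension $3$ and $E$ is torsion free, the projective dimension of $E$ is at most $2$, hence $\lext^q(E,E)=0$ for $q\ge3$. The sheaf $\lext^1(E,E)$ is supported on the non-locally-free locus of $E$, which has dimension at most $1$ because a torsion free sheaf on a smooth variety is locally free in codimension $1$; this forces $H^p(\lext^1(E,E))=0$ for $p\ge2$. The sheaf $\lext^2(E,E)$ is supported on the non-reflexive locus, which is $0$-dimensional, so $H^p(\lext^2(E,E))=0$ for $p\ge1$. Thus on the $E_2$ page the only nonzero entries are the full row $q=0$, the two entries $E_2^{01}, E_2^{11}$ in the row $q=1$, and the single entry $E_2^{02}$ in the row $q=2$.

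Next I would track the differentials. The $d_2$-differentials go $(p,q)\to(p+2,q-1)$, and on the surviving entries only two can be nonzero, namely $d_2^{01}\colon E_2^{01}\to E_2^{20}$ and $d_2^{11}\colon E_2^{11}\to E_2^{30}$. On the $E_3$ page the only differential with both source and target nonzero is $d_3^{02}\colon E_3^{02}\to E_3^{30}$, after which the sequence degenerates. Passing to kernels and cokernels yields $E_\infty^{00}=H^0(\lhom(E,E))$, $E_\infty^{10}=H^1(\lhom(E,E))$, $E_\infty^{20}=\coker d_2^{01}$, $E_\infty^{30}=\coker d_3^{02}$, $E_\infty^{01}=\ker d_2^{01}$, $E_\infty^{11}=\ker d_2^{11}$, $E_\infty^{02}=\ker d_3^{02}$, and zero elsewhere. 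Each $\ext^n(E,E)$ carries a finite filtration with associated graded $\bigoplus_{p+q=n}E_\infty^{p,q}$; since these are finite-dimensional $k$-vector spaces the filtration splits, which gives the direct sum decompositions (i)--(iii).

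For the final identity I use that the Euler characteristic of a spectral sequence is preserved from $E_2$ to $E_\infty$, so
$$\sum_{n}(-1)^n\dim\ext^n(E,E)=\sum_{p,q}(-1)^{p+q}\dim E_2^{p,q}=\chi(\lhom(E,E))-\chi(\lext^1(E,E))+\chi(\lext^2(E,E)),$$
and the higher vanishings above reduce $\chi(\lext^2(E,E))$ to $h^0(\lext^2(E,E))$.

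The only non-bookkeeping point is justifying the support/dimension bounds that kill most of the $E_2$ page; once these are in hand the rest is a direct read-off from the abutment, so I do not expect any real obstacle.
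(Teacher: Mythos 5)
Your spectral-sequence bookkeeping and the final Euler-characteristic computation follow the same route as the paper, and the vanishings you invoke for the row $q=0$ and for $\lext^1(E,E)$ (projective dimension $\le 2$ by Auslander--Buchsbaum, non-locally-free locus of dimension $\le 1$) are correct. The gap is in your treatment of $\lext^2(E,E)$: you assert that it is supported on the non-reflexive locus, \emph{which is $0$-dimensional}. But the non-reflexive locus is $\supp(Q_E)$ with $Q_E=E^{\vee\vee}/E$, and this can perfectly well be $1$-dimensional for a torsion free sheaf on $\PP$ --- indeed most of this paper is devoted to sheaves whose $Q_E$ is supported on a line (plus points). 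So the $0$-dimensionality of $\supp(\lext^2(E,E))$ does not follow from the support statement and has to be proved.

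What is true, and what the paper proves, is that $\lext^2(E,E)$ is nevertheless $0$-dimensional: applying $\lhom(-,E)$ to $0\to E\to E^{\vee\vee}\to Q_E\to0$ and using that the reflexive sheaf $E^{\vee\vee}$ has homological dimension $\le1$ gives $\lext^2(E,E)\simeq\lext^3(Q_E,E)$; then a locally free resolution $0\to L_2\to L_1\to L_0\to E\to0$ produces an epimorphism $\lext^3(Q_E,\op3)\otimes L_0\onto\lext^3(Q_E,E)$, whose source is $0$-dimensional because $\lext^3$ of any coherent sheaf with values in $\op3$ has support of codimension $\ge3$. Without this step your argument does not rule out $\h^1(\lext^2(E,E))\ne0$, which would both contribute an extra graded piece $E_\infty^{12}$ to $\ext^3(E,E)$ in item (iii) and force $\chi(\lext^2(E,E))$ rather than $h^0(\lext^2(E,E))$ in the displayed identity. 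Once the $0$-dimensionality of $\lext^2(E,E)$ is supplied, the rest of your proof goes through exactly as in the paper.
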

\begin{proof}
The first part is a standard calculation with the spectral 
sequence $E^{pq}_2:=\h^p(\lext^q(E,E))$, which converges in 
its forth page, because the spectral maps vanish. Note that 
$\h^p(\lext^q(E,E))=0$ for $p\ge2$ 
and $q\ge1$, since $\dim\lext^q(E,E)\le1$ for $q\ge1$. 
Furthermore, applying the functor $\lhom(\cdot,E)$ to the 
fundamental sequence \eqref{fundamental}, we get an 
epimorphism $\lext^3(E^{\vee\vee},E)\onto\lext^3(E,E)$ and 
the isomorphism $\lext^2(E,E)\simeq\lext^3(Q_E,E)$; however, 
the sheaf on the left vanishes because $E^{\vee\vee}$ is 
reflexive, so $\lext^3(E,E)=0$ as well. Finally, we also 
check that $\dim\lext^2(E,E)=0$; indeed, 
$E$ admits a resolution of the form
\begin{equation}\label{res-e}
0 \to L_2 \to L_1 \to L_0 \to E \to 0,
\end{equation}
where $L_k$ are locally free sheaves; we then get an 
epimorphism 
$$ \lext^3(Q_E,\op3)\otimes L_0 \onto \lext^3(Q_E,E), $$
which implies that $\dim\lext^3(Q_E,E)=0$ since 
$\dim\lext^3(Q_E,\op3)=0$.\\
The second claim is an immediate consequence of the first, 
since $\dim\lext^2(E,E)=0$.
\end{proof}

Assuming that $E$ is $\mu$-semistable provides a useful 
simplification of the previous general result.

\begin{Lema}\label{Extisomixed}
If $E$ be a $\mu$-semistable torsion free sheaf on $\PP$, 
then:
\begin{itemize}
\item[(i)] $\ext^1(E,E) = \h^1(\lhom(E,E))\oplus\ker
d^{01}_2$;
\item[(ii)]  $\ext^2(E,E) = \h^0(\lext^2(E,E)) \oplus 
\h^1(\lext^1(E,E)) \oplus \coker d^{01}_2$;
\item[(iii)]  $\ext^3(E,E)=0$.
\end{itemize}
Here, $d^{01}_2$ is the spectral sequence differential
$d^{01}_2: \h^0(\lext^1(E,E)) \to \h^2(\lhom(E,E))$.
\end{Lema}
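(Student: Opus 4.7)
My plan is to promote Lemma \ref{exts general} under the $\mu$-semistability hypothesis by establishing the single new vanishing $H^3(\lhom(E,E)) = 0$, which forces both $d^{11}_2$ and $d^{02}_3$ to be zero in the local-to-global Ext spectral sequence; the three formulas then fall out of the ones already in Lemma \ref{exts general}.

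First I would reduce this vanishing to the reflexive case. Set $F := E^{\vee\vee}$. Applying $\lhom(E,-)$ to the fundamental sequence $0 \to E \to F \to Q_E \to 0$ realizes $\lhom(E,F)/\lhom(E,E)$ as a subsheaf of $\lhom(E,Q_E)$, which is supported on $\supp(Q_E)$, of dimension at most $1$; hence its $H^2$ and $H^3$ both vanish, and the long exact sequence gives $H^3(\lhom(E,E)) \simeq H^3(\lhom(E,F))$. Applying $\lhom(-,F)$ to the same sequence and using that $\lhom(Q_E,F)=0$ (since $Q_E$ is torsion and $F$ torsion-free), the quotient $\lhom(E,F)/\lhom(F,F)$ embeds into $\lext^1(Q_E,F)$, again supported in dimension $\le 1$; so $H^3(\lhom(E,F)) \simeq H^3(\lhom(F,F))$.

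Next I would compute $H^3(\lhom(F,F))$. Since $F$ is reflexive on $\PP$, its projective dimension is at most $1$, so $\lext^i(F,F)=0$ for $i\ge 2$; its singular locus is $0$-dimensional, so $\lext^1(F,F)$ is $0$-dimensional as well. In the spectral sequence for $F$ the only $E_2^{p,q}$ with $p+q=3$ that can be nonzero is $E_2^{3,0}=H^3(\lhom(F,F))$, and no higher differential reaches it; hence $\ext^3(F,F) \simeq H^3(\lhom(F,F))$. Serre duality on $\PP$ then gives $\ext^3(F,F) \simeq \Hom(F,F(-4))^*$, which vanishes by the standard slope argument: any nonzero $\phi\colon F \to F(-4)$ would have image that is simultaneously a quotient of $F$ (of slope $\ge \mu(F)$) and a subsheaf of the $\mu$-semistable $F(-4)$ (of slope $\le \mu(F)-4$), a contradiction. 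Along the way I would observe that $F$ inherits $\mu$-semistability from $E$, since any subsheaf $G\subset F$ satisfies $c_1(G)=c_1(G\cap E)$ because $G/(G\cap E)\hookrightarrow Q_E$ is supported in codimension $\ge 2$.

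Once $H^3(\lhom(E,E))=0$ is in hand, the conclusion is formal: $d^{11}_2\colon H^1(\lext^1(E,E)) \to H^3(\lhom(E,E))=0$ must vanish, so $\ker d^{11}_2 = H^1(\lext^1(E,E))$; and $E_3^{3,0}$, being a quotient of $H^3(\lhom(E,E))=0$, vanishes, so $d^{02}_3$ vanishes as well, giving $\ker d^{02}_3 = H^0(\lext^2(E,E))$ and $\coker d^{02}_3 = 0$. Feeding these into Lemma \ref{exts general} yields (i), (ii), and (iii). The one delicate point is the two-step reduction of $H^3(\lhom(E,E))$ to $H^3(\lhom(F,F))$; the slope/Serre duality input on the reflexive sheaf and the final spectral sequence bookkeeping are routine.
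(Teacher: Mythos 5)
Your proof is correct and follows essentially the same route as the paper: both reduce $\h^3(\lhom(E,E))$ to $\h^3(\lhom(E^{\vee\vee},E^{\vee\vee}))$ in two steps through the fundamental sequence (each step killing a cokernel supported in dimension $\le 1$), then annihilate the latter by Serre duality and the $\mu$-semistability of $E^{\vee\vee}$, and read off (i)--(iii) from the resulting degeneration of the local-to-global spectral sequence at the third page. The only cosmetic differences are that you pass through the intermediate sheaf $\lhom(E,E^{\vee\vee})$ rather than $\lhom(E^{\vee\vee},E)$, and you derive (iii) from $\coker d^{02}_3=0$ instead of applying Serre duality directly to $E$.
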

\begin{proof}
The last item follows from  Serre duality, we have
$$ \ext^3(E,E) \simeq \Hom(E,E(-4))^* = 0, $$
with the vanishing given by $\mu$-semistability. 
In addition, we argue that $\mu$-semistability also implies 
that $\h^3(\lhom(E,E))=0$. Indeed, applying the functors 
$\lhom(\cdot,E)$ and $\lhom(E^{\vee\vee},\cdot)$ to the 
fundamental sequences \eqref{fundamental} we obtain, 
respectively,
$$ 0 \to \lhom(E^{\vee\vee},E) \to \lhom(E,E) \to 
\lext^1(Q_E,E) \to \cdots 
$$
and 
$$ 
0 \to \lhom(E^{\vee\vee},E) \to \lhom(E^{\vee\vee},
E^{\vee\vee})\to\lhom(E^{\vee\vee},Q_E)\to\cdots 
$$
In both sequences, the rightmost sheaf has dimension at most 
1, hence so does the cokernel of the leftmost monomorphism, 
and it follows that 
$$ \h^3(\lhom(E,E)) \simeq \h^3(\lhom(E^{\vee\vee},E)) \simeq 
\h^3(\lhom(E^{\vee\vee},E^{\vee\vee})). $$
However
$$ 
\h^3(\lhom(E^{\vee\vee},E^{\vee\vee}))=\ext^3(E^{\vee\vee},
E^{\vee\vee}) \simeq \Hom(E^{\vee\vee},E^{\vee\vee}(-4))^*= 
0; 
$$
the first equality follows from the spectral sequence for 
local to global ext's for $E^{\vee\vee}$, the isomorphism in 
the middle is given by Serre duality, and the vanishing is a 
consequence of the $\mu$-semistability of $E^{\vee\vee}$.

It follows that $d^{pq}_2=0$ except for $d^{01}_2$, while 
$d^{pq}_3=0$ for every $p$ and $q$. This means that 
$E^{pq}_2$ converges in its third page, providing the desired 
result.
\end{proof}

The following technical lemma will be helpful in our next 
argument.

\begin{Lema}\label{Technic}
Let $F$ be a torsion free sheaf. If $E$ is a subsheaf of $F$ 
for which the quotient sheaf $Z:=F/E$ is 0-dimensional, then 
\begin{equation}\label{sum}
\sum_{j=0}^3 (-1)^j\chi(\lext^j(Z,E)) + \sum_{j=0}^3 
(-1)^j\chi(\lext^j(F,Z)) = 0.
\end{equation}
\end{Lema}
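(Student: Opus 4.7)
The plan is to convert the two alternating sums of local Euler characteristics into global $\mathrm{Ext}$-Euler characteristics via the local-to-global spectral sequence, and then verify directly that the two resulting global Euler characteristics cancel.

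First I would apply the local-to-global spectral sequence $E_2^{p,q}=\h^p(\lext^q(A,B))\Rightarrow\ext^{p+q}(A,B)$. Since the alternating sum of dimensions is preserved across the pages of any bounded spectral sequence, one obtains the standard identity
\[ \chi(A,B) \;=\; \sum_{j\ge 0}(-1)^j\chi\bigl(\lext^j(A,B)\bigr) \]
for every pair of coherent sheaves $A,B$ on $\PP$ (the sums are finite, since $\lext^j=0$ for $j\ge 4$ on a smooth threefold). Applying this identity with $(A,B)=(Z,E)$ and with $(A,B)=(F,Z)$ reduces the claim \eqref{sum} to the purely numerical statement
\[ \chi(Z,E)+\chi(F,Z)=0. \]

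Second, I would evaluate these two Euler characteristics via Hirzebruch--Riemann--Roch on $\PP$. Because $Z$ is $0$-dimensional, its Chern character $\mathrm{ch}(Z)$ is concentrated in top degree, and its dual $\mathrm{ch}(Z)^{\vee}$ picks up a sign in that degree. A short and routine computation of $\int_{\PP}\mathrm{ch}(G)^{\vee}\mathrm{ch}(Z)\,\mathrm{td}(\PP)$ and of $\int_{\PP}\mathrm{ch}(Z)^{\vee}\mathrm{ch}(G)\,\mathrm{td}(\PP)$ then yields
\[ \chi(G,Z)\;=\;r\cdot\mathrm{length}(Z),\qquad \chi(Z,G)\;=\;-r\cdot\mathrm{length}(Z), \]
for any coherent sheaf $G$ of rank $r$ on $\PP$.

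Finally, since $Z=F/E$ is torsion, $E$ and $F$ share the same rank $r$, whence $\chi(F,Z)+\chi(Z,E)=r\cdot\mathrm{length}(Z)-r\cdot\mathrm{length}(Z)=0$, and the lemma follows. The only step with any substance is the numerical Hirzebruch--Riemann--Roch calculation in the second paragraph, and even there the only point requiring care is tracking the sign twist of $\mathrm{ch}^{\vee}$ in the degree-$3$ component; the remainder of the argument is formal.
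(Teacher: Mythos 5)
Your argument is correct, but it follows a genuinely different route from the paper's. The paper stays at the level of local $\lext$ sheaves: it splits a length-two locally free resolution of $E$ into short exact sequences, computes $\sum_j(-1)^j\chi(\lext^j(Z,E))=-\rk(E)\chi(Z)$ by chasing the resulting long exact sequences of local Ext sheaves, and then converts this into the statement $\sum_j(-1)^j\chi(\lext^j(F,Z))=\rk(F)\chi(Z)$ by Serre duality, using that all the sheaves $\lext^j(Z,E)$ and $\lext^j(F,Z)$ are $0$-dimensional so that $\chi(\lext^j(\cdot,\cdot))=h^0=\dim\ext^j(\cdot,\cdot)$. You instead globalize both sums at once via the standard degeneration identity $\chi(A,B)=\sum_j(-1)^j\chi(\lext^j(A,B))$ coming from the local-to-global spectral sequence, and then evaluate $\chi(F,Z)$ and $\chi(Z,E)$ by the Riemann--Roch pairing $\chi(A,B)=\int_{\PP}\mathrm{ch}(A)^{\vee}\mathrm{ch}(B)\,\mathrm{td}(\PP)$; since $\mathrm{ch}(Z)$ lives purely in degree $3$ this gives $r\cdot\mathrm{length}(Z)$ and $-r\cdot\mathrm{length}(Z)$ respectively, with $r=\rk(E)=\rk(F)$. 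Both proofs in fact establish the same two intermediate identities; yours trades the explicit resolution-and-duality bookkeeping for the (standard, but slightly heavier) input that the Ext Euler pairing is computed by $\mathrm{ch}^{\vee}\cdot\mathrm{ch}\cdot\mathrm{td}$ for arbitrary coherent sheaves on a smooth projective threefold, which is legitimate since every coherent sheaf on $\PP$ admits a finite locally free resolution. The sign bookkeeping in your degree-$3$ component is right (e.g.\ $\chi(\calo_p,\op3)=-1$ checks it), so the proof is complete.
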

\begin{proof}
Break a locally free resolution of $E$ as in \eqref{res-e} 
into two short exact sequences
$$ 0 \to L_2 \to L_1 \to K \to 0 ~~{\rm and}~~ 0 \to K \to 
L_0 \to E \to 0 . $$
Applying functor $\lhom(Z,-)$ and passing to Euler 
characteristic on the first sequence, we have:
\begin{align}\label{chi1}
\chi(\lext^2(Z,K)) - \chi(\lext^3(Z,K)) = 
\chi(\lext^3(Z,L_2)) - \chi(\lext^3(Z,L_1)) = 
\end{align}
\begin{align*}
=(\rk(L_2)-\rk(L_1))\chi(Z),
\end{align*}
since 
$\chi(\lext^3(Z,L_k))=\chi(\lext^3(Z,\mathcal{O}_{\PP})\otimes
L_k) = \rk(L_k)\cdot\chi(Z)$. Now, applying the functor 
$\lhom(Z,-)$ to the second exact sequence we obtain the 
isomorphism $\lext^1(Z,E)\simeq \lext^2(Z,K)$ and passing to 
the Euler characteristic we have
\begin{equation*}
\chi(\lext^2(Z,E)) - \chi(\lext^3(Z,E)) = \chi(\lext^3(Z,K)) 
- \chi(\lext^3(Z,L_0)).
\end{equation*}
Subtracting $\chi(\lext^1(Z,E))$ from the left hand side and 
$\chi(\lext^2(Z,K))$ from the right hand side, and then 
substituting for (\ref{chi1}) we have:
\begin{equation}\label{chiEE}
\sum_{j=0}^3 (-1)^j\chi(\lext^j(Z,E)) = 
(\rk(L_1)-\rk(L_2)-\rk(L_0))\cdot\chi(Z) = -\rk(E)\chi(Z) .
\end{equation}
Since $\dim\lext^j(Z,E)=0$, we have
$$ \chi(\lext^j(Z,E)) = h^0(\lext^j(Z,E)) = $$ 
$$ = \dim\ext^j(Z,E) 
\stackrel{\rm SD}{=} \dim\ext^{3-j}(E,Z) = 
\chi(\lext^{3-j}(E,Z)) , $$
where the supercript SD indicates the use of Serre duality. 
The formula \eqref{chiEE} applied to the sheaf $F$ then yields
$$ \sum_{j=0}^3 (-1)^j\chi(\lext^j(F,Z)) = \rk(F)\chi(Z) .$$
The fact that $\rk(F)=\rk(E)$ provides the desired identity.
\end{proof}

\begin{Lema}\label{Lemamixed}
Let $E$ be a rank 2 torsion free sheaf with mixed 
singularities. Then:
$$ \displaystyle \sum_{j=0}^{3}(-1)^j\dim \ext^j(E,E) = - 
8c_2(E) + 4 + 2c_1(E)^2. $$
\end{Lema}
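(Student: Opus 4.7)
The plan is to reduce the mixed singularities case to that of purely $1$-dimensional singularities, where the identity
$$\chi(F,F):=\sum_{j=0}^{3}(-1)^j\dim\ext^j(F,F)=-8c_2(F)+4+2c_1(F)^2$$
is available as the arbitrary-$c_1$ analogue of \cite[Lemma~10]{JMT} (whose proof carries over verbatim, as noted in the excerpt just after Theorem \ref{dimext1}). Throughout I write $\chi(A,B):=\sum_j(-1)^j\dim\ext^j(A,B)$.

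First I would split off the $0$-dimensional torsion of $Q_E$. Since $E$ has mixed singularities, $Q_E$ is $1$-dimensional but not pure, so it admits a nonzero maximal $0$-dimensional subsheaf $T$, with quotient $Q':=Q_E/T$ nonzero and pure of dimension $1$. Setting $F:=\ker\bigl(E^{\vee\vee}\onto Q_E\onto Q'\bigr)$, one obtains two short exact sequences
$$0\to E\to F\to T\to 0,\qquad 0\to F\to E^{\vee\vee}\to Q'\to 0.$$
Dualizing the second sequence and using the vanishings $\lhom(Q',\op3)=\lext^1(Q',\op3)=0$ (which hold because $Q'$ has pure codimension $2$ on the smooth $3$-fold $\PP$), we obtain $(E^{\vee\vee})^{\vee}\simto F^{\vee}$, hence $F^{\vee\vee}\simeq E^{\vee\vee}$. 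Consequently $Q_F\simeq Q'$ is purely $1$-dimensional, so $F$ is a torsion free sheaf with purely $1$-dimensional singularities. Since $T$ is $0$-dimensional, $c_1(F)=c_1(E)$ and $c_2(F)=c_2(E)$, and the $1$-dimensional case gives $\chi(F,F)=-8c_2(E)+4+2c_1(E)^2$.

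It then remains to establish that $\chi(E,E)=\chi(F,F)$. Applying $\Hom(-,E)$ and $\Hom(F,-)$ to $0\to E\to F\to T\to 0$ and taking alternating sums of dimensions yields
$$\chi(E,E)=\chi(F,E)-\chi(T,E),\qquad \chi(F,E)=\chi(F,F)-\chi(F,T),$$
whence $\chi(E,E)=\chi(F,F)-\chi(F,T)-\chi(T,E)$. Lemma \ref{Technic}, applied to $E\subset F$ with $F/E=T$ of dimension $0$, reads
$$\sum_{j=0}^{3}(-1)^j\chi(\lext^j(T,E))+\sum_{j=0}^{3}(-1)^j\chi(\lext^j(F,T))=0.$$
Because $T$ is $0$-dimensional, each local Ext sheaf appearing above is $0$-dimensional, so the corresponding spectral sequences for local-to-global Ext degenerate and the two alternating sums coincide respectively with $\chi(T,E)$ and $\chi(F,T)$. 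Hence $\chi(T,E)+\chi(F,T)=0$, and we conclude $\chi(E,E)=\chi(F,F)=-8c_2(E)+4+2c_1(E)^2$.

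The main technical point I anticipate is verifying that the intermediate sheaf $F$ has \emph{purely} $1$-dimensional singularities, which is precisely what licenses the appeal to the $1$-dimensional case; this in turn rests on the identification $F^{\vee\vee}\simeq E^{\vee\vee}$ obtained by dualizing $0\to F\to E^{\vee\vee}\to Q'\to 0$ and exploiting the codimension $2$ of $Q'$ in $\PP$. With that in hand, the remainder of the argument is a clean bookkeeping via Lemma \ref{Technic} and the additivity of Euler characteristics on short exact sequences.
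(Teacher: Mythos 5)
Your proof is correct, and it follows the same basic strategy as the paper: both split off the maximal artinian subsheaf of $Q_E$ to produce the intermediate sheaf (your $F$, the paper's $E'$) sitting in $0\to E\to F\to T\to 0$ with $F$ having purely $1$-dimensional singularities, and both invoke Lemma \ref{Technic} for the pair $E\subset F$. Where you diverge is in the bookkeeping and in the source of the $1$-dimensional input. For the latter, the paper does not quote the purely $1$-dimensional case of the formula directly; it observes that $E'$ has homological dimension $1$ and reruns the proof of \cite[Prop.\ 3.4]{Harshorne-Reflexive} — substantively the same thing you do by citing the arbitrary-$c_1$ analogue of \cite[Lemma 10]{JMT}, which the paper declares valid with proof omitted, so your appeal is consistent with the paper's own framework (and your verification that $F^{\vee\vee}\simeq E^{\vee\vee}$ via the codimension-$2$ vanishing of $\lext^{\le1}(Q',\op3)$ is more explicit than the paper's bare assertion). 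For the reduction $\chi(E,E)=\chi(F,F)$, the paper works at the level of local Ext sheaves: it uses the identity of Lemma \ref{exts general}, applies $\lhom(E',-)$ and $\lhom(-,E)$ to the short exact sequence, and must at the end identify the correction term $h^0(\lext^2(E,E))$ with $\chi(\lext^3(Z_E,E))$ via the chain $\lext^2(E,E)\simeq\lext^3(Q_E,E)\simeq\lext^3(Z_E,E)$. You instead work with global Euler characteristics throughout: additivity of $\chi(-,-)$ on the short exact sequence gives $\chi(E,E)=\chi(F,F)-\chi(F,T)-\chi(T,E)$, and the degeneration of the local-to-global spectral sequences for the $0$-dimensional target/source lets you read Lemma \ref{Technic} as $\chi(F,T)+\chi(T,E)=0$. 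This is a genuinely cleaner route — it bypasses the $\lext^2$ identification entirely — at the modest cost of having to note that $\ext^{>3}$ vanishes so that the alternating sums over the long exact sequences are legitimate. Both arguments are sound; yours is the more economical of the two.
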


\begin{proof}
Let $Z_E \into Q_E$ the maximal 0-dimensional subsheaf of 
$Q_E$, and set
$T_E := Q_E/Z_E$ to be the pure 1-dimensional quotient; we 
assume that both $Z_E$ and $T_E$ are nontrivial. Let $E'$ be 
the kernel of the composed epimorphism $E^{\vee\vee}\onto Q_E 
\onto T_E$; note that it also fits into the following short 
exact sequence
\begin{equation}\label{puremixed}
0 \to E \to E^{\prime} \to Z_E\to 0.
\end{equation}
Note that $c_1(E')=c_1(E)$ and $c_2(E')=c_2(E)$. In addition, 
$(E')^{\vee\vee}\simeq E^{\vee\vee}$, and $Q_{E'}\simeq T_E$, 
thus $E'$ is a torsion free sheaf with 1-dimensional 
singularities. It follows that $E'$ has homological dimension 
1 (that is $\lext^p(E',G)=0$ for $p\ge2$ and every coherent 
sheaf $G$), so the proof of \cite[Proposition 
3.4]{Harshorne-Reflexive} also applies for $E'$, and we 
conclude that
$$ \displaystyle \sum_{j=0}^{3}(-1)^j\dim \ext^j(E',E') = - 
8c_2(E) + 4 + 2c_1(E)^2. $$
Therefore, it is enough to prove that 
$$ \displaystyle \sum_{j=0}^{3}(-1)^j\dim \ext^j(E,E) = 
\sum_{j=0}^{3}(-1)^j\dim \ext^j(E',E'), $$
which, by Lemma \ref{exts general} is equivalent to show that
$$   \displaystyle \chi(\lhom(E,E)) - \chi(\lext^1(E,E)) + 
h^0(\lext^2(E,E)) = \chi(\lhom(E^{\prime},E^{\prime})) - 
\chi(\lext^1(E^{\prime},E^{\prime})). $$

\noindent To see this, note that applying the functor $\lhom(E^{\prime},-)$ to 
the sequence (\ref{puremixed}) we obtain:
\begin{align}
& \chi(\lhom(E^{\prime},E)) - 
\chi(\lhom(E^{\prime},E^{\prime})) +  
\chi(\lhom(E^{\prime},Z_E)) -  \nonumber \\
& \chi(\lext^1(E^{\prime},E)) + 
\chi(\lext^1(E^{\prime},E^{\prime}))-\chi(\lext^1(E^{\prime},Z
_E))=0. \nonumber
\end{align}
Next, applying the functor $\lhom(-,E)$ to the sequence 
(\ref{puremixed}) we have
\begin{align}
 &   \chi(\lhom(E^{\prime},E)) - \chi(\lhom(E,E)) + 
\chi(\lext^1(Z_E,E)) -   \nonumber \\
 & \chi(\lext^1(E^{\prime},E)) + 
 \chi(\lext^1(E,E))-\chi(\lext^2(Z_E,E))= 0. \nonumber
\end{align}
Taking the difference between these last two equations we 
obtain
$$ \chi(\lhom(E^{\prime},E^{\prime})) -  
\chi(\lext^1(E^{\prime},E^{\prime}))  =  \chi(\lhom(E,E)) -  
\chi(\lext^1(E,E)) +  $$
$$ - \chi(\lext^1(Z_E,E)) + \chi(\lext^2(Z_E,E)) + 
\chi(\lhom(E^{\prime},Z_E)) - \chi(\lext^1(E^{\prime},Z_E)) = 
$$
$$ 
\chi(\lhom(E,E))-\chi(\lext^1(E,E))+\chi(\lext^3(Z_E,E)), 
$$
with the second equality following from applying the formula 
established in Lemma \ref{Technic} to the sheaves $E$ and 
$E'$. Applying the functor $\lhom(-,E)$ to the sequences
$$ 
0 \to E' \to E^{\vee\vee} \to T_E \to 0 ~~{\rm and}~~
0 \to Z_E \to Q_E \to T_E \to 0 
$$
we conclude that $\lext^3(T_E,E)=0$ and 
$\lext^3(Q_E,E)\simeq\lext^3(Z_E,E)$. We already noticed in 
the proof of Lemma \ref{exts general} that 
$\lext^3(Q_E,E)\simeq\lext^2(E,E)$, thus 
$$ \chi(\lext^3(Z_E,E))=\chi(\lext^2(E,E))=h^0(\lext^2(E,E)), $$ 
as desired.
\end{proof}

Gathering the above results we are in position to prove the 
Theorem \ref{dimext1}.
\begin{proof}[Proof of Theorem \ref{dimext1}]
By Lemma \ref{Lemamixed}, it is enough to show that $\dim \Hom(E,E) = 1$ and $\ext^3(E,E) = 0$, but these follow 
easily from the stability of $E$.
\end{proof}

The following proposition will be a technical tool that will 
help us to compute explicitly the dimension of $\ext^1(E,E)$ 
for certain torsion free sheaves.

\begin{Prop}\label{important}
Let $F$ be a stable rank 2 reflexive sheaf on $\PP$, with 
$\dim \ext^2(F,F) = 0$. Let $Z$ be an artinian sheaf, and $T$ 
be a sheaf of pure dimension 1 such that $H^1(\lhom(F,T))=0$; 
set $Q :=Z \oplus T$ and assume also that 
$\sing(F)\cap\supp(Q)=\emptyset$. If $\varphi: F \to Q$ is an 
epimorphism, then, for $E := \ker\varphi$,
\begin{itemize}
\item[(i)] $E$ is a stable rank 2 torsion free sheaf;
\item[(ii)] $c_1(E) = c_1(F)$ and  $c_2(E) = c_2(F) + 
\mult(T)$, where $\mult(T)$ denotes the multiplicity of the 
sheaf $T$;
\item[(iii)] $\ext^2(E,E) = \h^0(\lext^3(Z,E)) \oplus 
\ext^3(T,E)$.
\end{itemize}
\end{Prop}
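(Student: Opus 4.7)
The plan is to handle the three items in order, with (iii) carrying essentially all the content.

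For (i), I would observe that $E$ is torsion free of rank $2$ as a subsheaf of $F$ with quotient $Q$ of rank $0$, and the hypothesis $\sing(F)\cap\supp(Q)=\emptyset$ ensures $E$ agrees with $F$ off a codimension-$\geq 2$ subset, hence $E^{\vee\vee}\cong F^{\vee\vee}=F$ (reflexive sheaves being determined off codim-$\geq 2$ subsets). Stability is inherited from $F$: any rank-$1$ subsheaf $G\subset E$ also embeds in $F$, so $\mu(G)<\mu(F)=\mu(E)$.

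For (ii), the multiplicativity $c(E)\cdot c(Q)=c(F)$ together with $c_1(Q)=0$ (since $Q$ is supported in codim $\geq 2$) gives $c_1(E)=c_1(F)$. Since $c_2(Z)=0$ for the artinian summand and $c_2(T)=-\mult(T)$ for the pure 1-dimensional summand, we conclude $c_2(E)=c_2(F)+\mult(T)$.

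For (iii), the crucial step is to apply $\Hom(-,E)$ to $0\to E\to F\to Q\to 0$ to obtain
\begin{align*}
\ext^2(F,E) \to \ext^2(E,E) \to \ext^3(Q,E) \to \ext^3(F,E) \to \ext^3(E,E) \to 0.
\end{align*}
Stability of $E$ from part (i), together with Serre duality, kills $\ext^3(E,E)$. If I can also show $\ext^i(F,E)=0$ for $i=2,3$, then $\ext^2(E,E)\simeq\ext^3(Q,E)=\ext^3(Z,E)\oplus\ext^3(T,E)$. Since $Z$ is artinian, all $\lext^q(Z,E)$ are $0$-dimensional, so the local-to-global spectral sequence for $\Ext^*(Z,E)$ degenerates and produces $\ext^3(Z,E)=\h^0(\lext^3(Z,E))$, matching the desired formula.

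To prove $\ext^i(F,E)=0$ for $i=2,3$, I would apply $\Hom(F,-)$ to the same short exact sequence. The hypothesis $\ext^2(F,F)=0$ and the vanishing $\ext^3(F,F)=0$ coming from stability of $F$ and Serre duality together reduce the problem to $\ext^i(F,Q)=0$ for $i=1,2$. The disjointness $\sing(F)\cap\supp(Q)=\emptyset$ means $F$ is locally free on a neighborhood of $\supp(Q)$, so $\lext^{\geq 1}(F,Q)=0$ and thus $\ext^i(F,Q)=\h^i(F^{\vee}\otimes Q)$. The artinian summand $F^{\vee}\otimes Z$ has no positive cohomology; the pure 1-dimensional summand $F^{\vee}\otimes T=\lhom(F,T)$ automatically has $\h^2=0$, and $\h^1$ vanishes exactly by the hypothesis $\h^1(\lhom(F,T))=0$. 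The main obstacle is the careful bookkeeping: each hypothesis (reflexivity of $F$, $\ext^2(F,F)=0$, stability of $F$, the disjointness of $\sing(F)$ and $\supp(Q)$, and $\h^1(\lhom(F,T))=0$) enters at a unique spot in the above chain, so removing any one would break the collapse to $\ext^2(E,E)\simeq\ext^3(Q,E)$.
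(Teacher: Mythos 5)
Your argument is correct, and for the main item (iii) it takes a genuinely different route from the paper's. The paper works inside the local-to-global spectral sequence for $\ext^\bullet(E,E)$: via Lemma \ref{Extisomixed} it writes $\ext^2(E,E)$ as $\h^0(\lext^2(E,E))\oplus\h^1(\lext^1(E,E))\oplus\coker d_2^{01}$, kills $\coker d_2^{01}$ by comparing that differential with the one for the pair $(F,E)$ (whose surjectivity rests on $\ext^2(F,E)=0$, established exactly as you do from $\ext^2(F,F)=0$ and $\ext^1(F,Q)=0$), computes $\lext^2(E,E)\simeq\lext^3(Z,E)\oplus\lext^3(T,E)$ and $\h^1(\lext^1(E,E))\simeq\h^1(\lext^2(T,E))$, and finally reassembles $\h^0(\lext^3(T,E))\oplus\h^1(\lext^2(T,E))$ into $\ext^3(T,E)$ using the spectral sequence of the pair $(T,E)$. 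You instead stay at the level of global Ext groups throughout: the two vanishings $\ext^2(F,E)=\ext^3(F,E)=0$ --- obtained from the same hypotheses, together with $\ext^3(F,F)=0$ (stability plus Serre duality) and $\ext^2(F,Q)=0$ (since $\dim\lhom(F,Q)\le1$ and $\lext^{\ge1}(F,Q)=0$) --- collapse the long exact sequence of $\Hom(-,E)$ applied to $0\to E\to F\to Q\to0$ to an isomorphism $\ext^2(E,E)\simeq\ext^3(Q,E)=\ext^3(Z,E)\oplus\ext^3(T,E)$, and the degenerate spectral sequence for the artinian sheaf $Z$ identifies $\ext^3(Z,E)$ with $\h^0(\lext^3(Z,E))$. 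Every hypothesis is consumed at the same spot in both arguments, but your version avoids the split-and-reassemble step for $\ext^3(T,E)$ and is the shorter and cleaner of the two. One minor caveat on item (i): the inequality $\mu(G)<\mu(F)$ for rank-one subsheaves presumes that the Gieseker-stable reflexive sheaf $F$ is in fact $\mu$-stable (otherwise one only gets $\mu(G)\le\mu(F)$, which does not transfer to $E$ since the reduced Hilbert polynomial drops from $F$ to $E$); this holds for the rank-2 reflexive sheaves considered here and the paper likewise treats (i)--(ii) as immediate.
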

\begin{proof}
The items (i) and (ii) are straightforward calculations; we 
will prove (iii). 
First we will show that the spectral sequence map  
$$d_2^{01}: \h^0(\lext^1(E,E)) \to \h^2(\lhom(E,E))$$
\noindent is an epimorphism.
Consider the exact sequence:
\begin{equation}\label{mainmixed}
 0\to E \to F \to Q \to 0.
\end{equation}
Applying the functor $\lhom(F,-)$ to \eqref{mainmixed}, once 
$\coker\{\lhom(F,E)\to \lhom(E,E)\}$ is supported in 
dimension 1, we have 
\begin{equation}\label{toprow}
\h^2(\lhom(F,E)) \to \h^2(\lhom(F,F)) \to 0.
\end{equation}
Next apply $\Hom(F,-)$ in the sequence (\ref{mainmixed}), by 
hypothesis, $\ext^2(E,E) = 0$, then we have
$$\ext^1(F,Q) \to \ext^2(F,E) \to 0. $$
To see that $\ext^1(F,Q)$ vanishes, note that $\ext^p(F,Q) = 
0$ for $p=2,3$ because $F$ is reflexive. $\lext^1(F,Q) = 0$ 
because $\sing(F) \cap \supp(Q) = \emptyset$. 
In addition, 
$\h^p(\lhom(F,Q)) = 0$ for $p=2,3$ because $\dim Q = 1$. From 
the spectral sequence, $\ext^1(F,Q) = \h^1(\lhom(F,Q))$ which 
vanishes by hypothesis. Therefore $d_2^{01}: 
\h^0(\lext^1(F,E)) \to \h^2(\lhom(F,E))$ is surjective. Then 
we have
\begin{equation} \label{d012-diagram}
\xymatrix{ \h^0(\lext^1(F,E)) \ar[d]\ar[r]^{d^{01}_2} & 
\h^2(\lhom(F,E)) \ar[d] \\
\h^0(\lext^1(E,E)) \ar[r]^{d^{01}_2} & \h^2(\lhom(E,E)),
}\end{equation}
\noindent where the vertical arrow in the left is the natural 
map coming from the exact sequence (\ref{mainmixed}), and 
horizontal maps came from the spectral sequence. Since the 
top row map, and the right vertical map are surjective, we
have that the bottom map is surjective as we wanted.
Now, applying $\lhom(-,E)$ to the sequence (\ref{mainmixed}) 
we have
$$\lext^2(E,E) \simeq \lext^3(Q,E) \simeq \lext^3(Z,E) \oplus 
\lext^3(T,E).$$
Furthermore, there is an exact sequence
\begin{equation*}
\xymatrix{ \lext^1(F,E) \ar[r] & \lext^1(E,E) \ar[r]^{f}& 
\lext^2(Q,E) \ar[r]& 0,  }
\end{equation*}
where $\dim \ker f = 0$, since $\dim\lext^1(F,E)=0$. Thus
\begin{equation}\label{almost} 
\ext^2(E,E) = \h^0(\lext^3(Z,E)) \oplus \h^0(\lext^3(T,E)) 
\oplus \h^1(\lext^2(T,E)).
\end{equation}
\noindent Since $F$ is reflexive, from \cite[Proposition 
1.1.6]{HL}, we have  $\lext^p(T,F) = 0$ for $p=0,1$ and 
$\mathrm{codim}~\lext^p(T,F)\geq p$ for $p = 2,3$. 
Clearly, $\dim\lext^p(T,E) \le 1$ for $p>0$, while 
$\lhom(T,E) = 0$; using these facts, we obtain from the 
spectral sequence for $\ext^{\cdot}(T,E)$ that
\begin{equation}\label{ext3}
 \ext^3(T,E) = \h^0(\lext^3(T,E)) \oplus \h^1(\lext^2(T,E)).
\end{equation}
Putting together the equations (\ref{almost}) and 
(\ref{ext3})  we obtain item (iii).
\end{proof}

\begin{Remark}
Item (iii) of Proposition \ref{important} also holds when 
$T=0$ without assuming that $\ext^2(F,F)=0$, see the proof of 
Main Theorem \ref{main2}, starting in page \pageref{pf mthm 
2} below.
\end{Remark}

An important ingredient of the Proposition \ref{important} is 
a family of stable reflexive sheaves, that fills out an 
irreducible component of the moduli space, with the expected 
dimension. A priori, it is not clear why such family should 
exist. In \cite{JMT} the authors proved that, indeed, such 
families exists for infinitely many values of the second 
Chern class, provided that the first Chern class is even. 
Below we state a theorem that  shows that this happens also 
for sheaves with odd first Chern class. For simplicity of notation, we define
$$ G_{(a,b,c)}:=a\cdot\mathcal{O}_{\PP}(-3) \oplus b\cdot\mathcal{O}_{\PP}(-2) \oplus c\cdot\mathcal{O}_{\PP}(-1). $$

\begin{Teo}\label{thm newclass} 
For each triple $(a,b,c)$ of positive integers such that 
$3a+2b+c$ is odd, the family of  rank 2 reflexive 
sheaves $F$ obtained as the cokernel of the maps $\alpha$ 
below 
\begin{equation}\label{newclass}
0 \to a\cdot\mathcal{O}_{\PP}(-3) \oplus 
b\cdot\mathcal{O}_{\PP}(-2) \oplus 
c\cdot\mathcal{O}_{\PP}(-1) \stackrel{\alpha}{\longrightarrow}
(a+b+c+2)\cdot\mathcal{O}_{\PP} \to F(k) \to 0,
\end{equation}
where $k:=(3a+2b+c+1)/2$, fills out a nonsingular irreducible component 
$\mathcal{S}(a,b,c)$ of $\mathcal{R}(-1;n;m)$ of expected dimension $8n-5$, with $n$ and $m$ are given by the expressions:
\begin{equation}\label{m(a,b,c)}
\begin{split}
& n =  \frac{1}{4}(3a+2b+c+1)^2 + 3a + b, \\
& m(a,b,c) = 27{{a+2}\choose{3}} + 8{{b+2}\choose{3}} + 
{{c+2}\choose{3}} + 3(3a+2b+5)ab +\\
& + \frac{3}{2}(3a+c+4)ac + (2b+3c+3)bc + 6abc.
\end{split}
\end{equation}
More precisely, let $\widetilde{\s}(a,b,c)\subset 
\Hom\left(G_{(a,b,c)},(a+b+c+2)\cdot\mathcal{O}_{\PP}\right)$ 
be the open subset consisting of monomorphisms with 
0-dimensional degeneracy loci; then
$$ \s(a,b,c) = \widetilde{\s}(a,b,c) / 
((\mathrm{Aut}(G_{(a,b,c)})\times GL(a+b+c+2))/\mathbb{C}^*) 
. $$
\end{Teo}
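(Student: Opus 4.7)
The plan is to verify successively: (i) that for generic $\alpha$ the cokernel $F(k)$ in (\ref{newclass}) is rank $2$ reflexive; (ii) that its Chern classes match the stated $n$ and $m$; (iii) that the parametrization by $\widetilde{\s}(a,b,c)$ modulo $(\Aut(G_{(a,b,c)})\times GL(a+b+c+2))/\mathbb{C}^*$ yields an irreducible, rational family of the claimed dimension $8n-5$; and (iv) that the generic $F$ is a nonsingular point of $\mathcal{R}(-1,n,m)$, so that the closure of the image is a full irreducible component.

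For (i), a rank count gives $\rk F=2$. A standard codimension count on the space of morphisms $G_{(a,b,c)} \to (a+b+c+2)\mathcal{O}_{\PP}$ shows that an open dense subset $\widetilde{\s}(a,b,c)$ consists of injections whose degeneracy locus is 0-dimensional. For $\alpha\in\widetilde{\s}(a,b,c)$, the cokernel $F(k)$ then has homological dimension $\le 1$ with singular locus of dimension $0$, so that $\dim \lext^{1}(F,\op3)\le 0$ and $\lext^{p}(F,\op3)=0$ for $p\ge 2$, hence $F$ is reflexive by the standard criterion. For (ii), multiplicativity of total Chern classes applied to (\ref{newclass}) gives $c(F(k))=c(G_{(a,b,c)})^{-1}$, whence $c_1(F(k))=3a+2b+c$, and by the choice of $k$ we obtain $c_1(F)=-1$; a direct expansion of the same product gives the asserted $n=c_2(F)$ and $m=c_3(F)=m(a,b,c)$.

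For (iii), irreducibility and rationality of the parameter space are automatic because $\widetilde{\s}(a,b,c)$ is an open subset of a vector space. Two morphisms $\alpha,\alpha'$ produce isomorphic cokernels if and only if they differ by the natural action of $\Aut(G_{(a,b,c)})\times GL(a+b+c+2)$, whose diagonal $\mathbb{C}^*$ acts trivially, so the modular map factors as stated. The dimension of $\s(a,b,c)$ is therefore
\begin{equation*}
\dim\Hom\!\left(G_{(a,b,c)},(a+b+c+2)\op3\right)-\dim\Aut(G_{(a,b,c)})-(a+b+c+2)^{2}+1,
\end{equation*}
and a direct computation (using $\chi(\op3(j))=\binom{j+3}{3}$ for $j=1,2,3$) shows this equals $8n-5=8n-3+2e$ with $e=-1$, the expected dimension from Corollary \ref{dim calm}.

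The main obstacle is (iv): showing $\ext^{2}(F,F)=0$ for generic $F$, which is equivalent to smoothness of $\mathcal{R}(-1,n,m)$ at $[F]$ and pins the component dimension to $8n-5$. My approach is to apply $\Hom(F,-)$ (and $\Hom(-,F)$) to (\ref{newclass}), twisted appropriately by $\op3(-k)$, and to reduce $\ext^{2}(F,F)$ to a combination of groups of the form $\ext^{i}(F,\op3(j))$. Via Serre duality the latter become $H^{3-i}(F(j-4))^{*}$, and these cohomologies can be read off by applying $\lhom(-,\op3)$ to (\ref{newclass}) (so that $F^{\vee\vee}=F$ fits in a dual Beilinson-type resolution) and chasing the resulting long exact sequences together with Bott vanishing on $\PP$. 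Once these vanishings are assembled, stability of $F$ (which follows from the shape of its resolution together with $\mu(F)=-1/2$) yields $\Hom(F,F)=\mathbb{C}$ and $\ext^{3}(F,F)=0$, so
\begin{equation*}
\dim\ext^{1}(F,F)=1+\dim\ext^{2}(F,F)-\chi(F,F)=8n-5.
\end{equation*}
Hence the generic point of $\s(a,b,c)$ is a smooth point of $\mathcal{R}(-1,n,m)$ at which the component has the expected dimension equal to $\dim\s(a,b,c)$, so $\s(a,b,c)$ is open in, and therefore equals, a nonsingular irreducible component of $\mathcal{R}(-1,n,m)$.
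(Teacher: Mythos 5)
Your proposal is correct and follows essentially the same route as the paper: the same orbit-space dimension count, the same Chern-class computation from the resolution \eqref{newclass}, stability from $h^0$-vanishing read off the resolution, and $\ext^2(F,F)=0$ obtained by applying Hom functors to the defining resolution and reducing to cohomology of twists of $F$ (which vanishes by the resolution itself). The only cosmetic difference is that you pass through Serre duality and $\Hom(F,-)$ where the paper applies $\Hom(-,F(k))$ directly and observes $\h^1(F(t))=0$ for all $t$ and $\h^2(F(k))=0$.
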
 
\begin{proof}
Let $a,b,c \in \mathbb{Z}$, such that $3a+2b+c$ is odd and non zero, and consider morphisms of the form
$$ \alpha: G_(a,b,c) \to (a+b+c+2)\cdot\mathcal{O}_{\PP}. $$
If the degeneracy locus
$$ \Delta(\alpha) := \{ x\in\PP ~|~ \alpha(x) ~~ \text{is not injective} \} $$
is 0-dimensional, then the cokernel of $\alpha$ is a rank 2 reflexive sheaf $F$ on $\PP$, which we twist by 
$k:=(3a+2b+c+1)/2$, so that $c_1(F)=-1$, and the exact sequence in display (\ref{newclass}) is satisfied.
The dimension of this family of rank 2 reflexive sheaves is given by
$$ \dim\Hom\left(G_{(a,b,c)}, (a+b+c+2)\cdot\mathcal{O}_{\PP}\right) - \dim\mathrm{Aut}\left(G_{(a,b,c)}\right) - (a+b+c+2)^2 + 1 = $$
$$8k^2 + 24a + 8b - 5 = 8c_2(F) -5. $$

Note that for such sheaf $F$ satisfies $h^0(F(-1)) = 0$, thus $F$ is always stable. It only remains for us to check that $\dim \ext^2(F,F) = 0$. This follows from applying the functor $\Hom(\cdot,F(k))$ to the exact sequence in display (\ref{newclass}), and observing that $\h^1(F(t))=0$ for every $t\in\mathbb{Z}$ and that $\h^2(F(k))=0$. Therefore  the family of sheaves given by (\ref{newclass}) provides a component of the moduli space of stable rank 2 reflexive sheaves on $\PP$.
\end{proof}

The case that deserves special attention is the case $a = b 
=0$ and $c = 1$, that give us $c_1(F) = -1$, $c_2(F) = c_3(F) 
= 1$. In \cite[Lemma 9.4]{Harshorne-Reflexive} is shown that 
every reflexive sheaf in $\mathcal{R}(-1,1,1)$ admits  a 
resolution of the form:
\begin{equation}\label{resR(-1,1,1)}
0  \to \mathcal{O}_{\PP}(-2) 
\stackrel{\alpha}{\longrightarrow}
3\cdot\mathcal{O}_{\PP}(-1) \to F \to 0.
\end{equation}

From this sequence we can easily deduce the splitting behaviour of a 
sheaf $F$ in $\calr(-1,1,1)$. Indeed, each one of the $3$ rows 
of the map $\alpha$ can be viewed as the equation of a 
hyperplane in $\mathbb{P}^3$, since $\alpha$ is injective, 
the hyperplane must intersect in exactly one point $p$, that 
coincides with the singularity of the sheaf $F$. Thus, if $l 
\subset \PP$ is a line, if $p \notin l$, then the restriction 
of $F$ on $l$, $F|_{l}$, is isomorphic to $\mathcal{O}_l(-1) 
\oplus \mathcal{O}_l$. On the other hand, if $p \in l$, from 
sequence (\ref{resR(-1,1,1)}), we have that $F|_{l} \simeq 
\mathcal{O}_{p} \oplus 2 \mathcal{O}_l(-1)$. Summarizing, we 
have:

\begin{equation}\label{splitingF}
F|_l=\begin{cases}
\mathcal{O}_l(-1) \oplus \mathcal{O}_l,& \text{if } p \notin 
l,\\
\mathcal{O}_{p} \oplus 2\cdot\mathcal{O}_l(-1),& \text{if } p 
\in l.
\end{cases}
\end{equation}

\begin{Remark}\label{R(-1,1,1)} From \cite[Example 4.2.1]
{Harshorne-Reflexive} it follows that $\mathcal{R}(-1,1,1)$ 
is irreducible non-singular and rational of dimension $3$. 
Moreover, there is an isomorphism 
$\mathcal{R}(-1,1,1)\xrightarrow{\simeq}\PP,\ [F]\mapsto
\sing(F)$, and every sheaf $[F]\in\mathcal{R}(-1,1,1)$
fits in an exact triple $0\to\op3(-2)\to3\cdot\op3(-1)\to 
F\to0$. This yields that $\ext^2(F,F)=0$. Also Theorem 
\ref{thm newclass} implies that $\s(0,0,1)= 
\mathcal{R}(-1,1,1)$. Besides, under the isomorphism 
$\mathcal{R}(-1,1,1)\simeq\PP$, the above exact triple 
globalizes to the exact triple over $\PP\times\PP$:
\begin{equation}\label{univ F}
0\to\op3(-2)\boxtimes\op3\xrightarrow{\alpha}\op3(-1)
\boxtimes T_{\PP}(-1)\to\mathbf{F}\to0,
\end{equation}
where $\mathbf{F}$ is the universal family of reflexive 
sheaves over $\calr(-1,1,1)$, the morphism $\alpha$ is the
composition $\op3(-2)\boxtimes\op3\xrightarrow{i(-1)
\boxtimes\mathrm{id}}4\cdot\op3(-1)\boxtimes\op3
\xrightarrow{\mathrm{id}\boxtimes\epsilon}\op3(-1)\boxtimes
T_{\PP}(-1)$, and $i,\epsilon$ are the morphisms in the 
Euler exact triple $0\to\op3(-1)\xrightarrow{i}4\cdot\op3
\xrightarrow{\epsilon}T_{\PP}(-1)\to0$.
\end{Remark}

\section{Sheaves with 0-dimensional and mixed 
singularities}\label{New Irreducible Components}

In \cite{JMT} the authors produced examples of irreducible 
components with $0$-dimen-sional singularities and pure 
$1$-dimensional singularities, in the moduli space of rank 2 
stable torsion free sheaves with first Chern class equals to 
0, and in \cite{IT} the authors proved the existence of 
irreducible components in $\calm(0,3,0)$ whose generic point 
is a sheaf with mixed singularities, with first Chern class 
equals to 0. The first natural question that arises is that 
if similar constructions can be made for sheaves with odd 
first Chern class, and if it is possible similar irreducible 
components for non zero third Chern class. 
    
We will explicit construct examples  of irreducible 
components of the moduli space of torsion free sheaves with 
mixed singularities. We refer the reader to \cite{IT} for 
some examples in $\calm(0,3,0)$. 

For the rest of this work, 
let $e \in \{-1,0\}$, and $n$, $m$ be two integers such 
that $en \equiv m (\mathrm{mod}~2)$. Let 
\begin{equation}\label{def R*}
\calr^*(e,n,m):=\{[F]\in\calr(e,n,m)\ |\ \ext^2(F,F)=0 \}.
\end{equation}
By semicontinuity, $\calr^{*}(e,n,m)$ is an open smooth 
subset of $\calr(e,n,m)$ such that, in view of Theorem 
\ref{dimext1} and Corollary \ref{dim calm}, 
\begin{equation}\label{dim R*}
\dim_{[F]}\calr^{*}(e,n,m)=\dim\ext^1(F,F)=8n-3+2e,\ \ \ \ \ 
\ \ \ [F]\in\calr^{*}(e,n,m).
\end{equation} 
(Here and below by the dimension $\dim_xX$ of a given scheme 
$X$ locally of finite type at a point $x\in X$ we mean the
maximum of dimensions of irreducible components of $X$ 
passing through the point $x$.)

Let $(\PP)^s_0$ be the open dense subset of $(\PP)^s$ 
consisting of disjoint unions of $s$ distinct points in 
$\PP$. For any closed point $[F]\in\calr^{*}(e,n,m)$, 
define the sets
\begin{equation}\label{PiF}
{\Pi}_{[F]}:=\{S\in(\PP)^s_0~|~S\cap\sing(F)=\emptyset\},
\end{equation}
\begin{equation}\label{X_F}
\mathcal{X}_{[F]}:=\{(l,S)\in\mathrm{G}(2,4)\times
(\PP)^s_0~|~l\cap S=\emptyset,\ (l\cup S)\cap\sing(F)
=\emptyset, ~\mathrm{and}~F|_l=\mathcal{O}_l(e)\oplus
\mathcal{O}_l\}.
\end{equation}
Note that, since any reflexive sheaf $F$ from $\calr^{*}(e,n
,m)$ has 0-dimensional singularities, the set
\begin{equation}\label{R*times P3s}
\left(\calr^{*}(e,n,m)\times(\PP)_0^s\right)_0:= \{( [F],S)\in
\calr^{*}(e,n,m)\times(\PP)_0^s ~|~S\in{\Pi}_{[F]}\} 
\end{equation}
is a dense open subset in $\calr^{*}(e,n,m)\times(\PP)^s$, 
hence by \eqref{dim R*} it is smooth equidimensional of 
dimension
\begin{equation}\label{dim product 1}
\dim\left(\calr^{*}(e,n,m)\times(\PP)_0^s\right)_0=8n+3s+2e-3.
\end{equation}
Respectively, by the Grauert--M\"ulich Theorem, the set
\begin{equation}\label{R*times G times GP3s}
\left(\calr^{*}(e,n,m)\times G(2,4)\times(\PP)_0^s\right)_0:= 
\{( [F],(l,S))\in\calr^{*}(e,n,m)\times G(2,4)\times(\PP)_0^s 
~|~(l,S)\in\mathcal{X}_{[F]}\}
\end{equation}
is a dense open subset in $\calr^{*}(e,n,m)\times G(2,4)
\times(\PP)_0^s$, hence by \eqref{dim R*} it is smooth 
equidimensional of dimension
\begin{equation}\label{dim product}
\dim\left(\calr^{*}(e,n,m)\times G(2,4)\times(\PP)_0^s
\right)_0=8n+3s+2e+1.
\end{equation}
For a pair $([F],S)\in\left(\calr^{*}(e,n,m)\times(\PP)_0^s
\right)_0$, consider the 
$2s$-dimensional vector space $\Hom(F,\calo_S)$ and 
its open dense subset $\Hom(F,\calo_S)_e$ of epimorphisms 
$F\twoheadrightarrow \calo_S$. By construction, the group 
$\mathrm{Aut}(\calo_S)$ acts on $\Hom(F,\calo_S)_e$, and it 
follows that the quotient space $\Hom(F,\calo_S)_e/\mathrm
{Aut}(\calo_S)$ is a smooth irreducible scheme isomorphic to 
a product of projective spaces, where $S=(q_1,...,q_s)$:
\begin{equation}\label{descr fibre 1}
\begin{split}
&\Hom(F,\calo_S)_e/\mathrm{Aut}(\calo_S)\simeq\mathbf
\prod_{i=1}^{s}\mathbb{P}^1_{q_i},\\  
& \dim\Hom(F,\calo_S)_e/\mathrm{Aut}(\calo_S)=s,
\end{split}
\end{equation}
and where $\mathbb{P}^1_{q_i}=\Hom(F,\calo_{q_i})_e/\mathrm
{Aut}(\calo_{q_i})$, $i=1,...,s$.

Now, for any element $\phi\in\Hom(F,\calo_S)_e$ the torsion 
free sheaf $E_{\phi}:=\ker(\phi:F\twoheadrightarrow\calo_S)$ 
is stable, and defines a closed point in $\mathcal{M}(e,n,
m-2s)$. Furthermore, $E_{\phi}\simeq E_{\phi'}$ if, and only 
if, there is a $g\in\mathrm{Aut}(\calo_S)$ such that 
$\phi=g\circ\phi'$. Denote by $[\phi]$ the equivalence class 
of $\phi$ modulo $\mathrm{Aut}(\calo_S)$ and consider the set
\begin{equation}\label{tilde tau}
\tilde{\calt}(e,n,m,s):=\{x=([F],S,[\phi_x])~|~([F],S)
\in\left(\calr^{*}(e,n,m)\times(\PP)_0^s\right)_0, 
\phi_x\in\Hom(F,\calo_S)_e\}.
\end{equation}
By definition, $\tilde{\calt}(e,n,m,s)$ is fibered over
$\left(\calr^{*}(e,n,m)\times(\PP)_0^s\right)_0$
with fiber \linebreak $\Hom(F,\calo_S)_e/\mathrm{Aut}(\calo_S)$
over a given point $([F],S)$. Thus by \eqref{dim R*} and
\eqref{descr fibre} we conclude that $\tilde{\calt}(e,n,m,s)$ is
naturally endowed with a structure of smooth equidimensional
scheme of dimension
$$
\dim\tilde{\calt}(e,n,m,s)=8n+4s+2e-3,
$$
and the number of irreducible components of $\tilde{\calt}
(e,n,m,s)$ is equals to the number of those of 
$\calr^{*}(e,n,m)$.
Furthermore, for any point $t=([F],S),[\phi_x])\in\tilde
{\calt}(e,n,m,s)$, the sheaf $E(t):=\ker\{ \phi: F
\twoheadrightarrow\calo_S\}$ is a stable sheaf from
$\mathcal{M}(e,n,m-2s)$. Hence there is a well-defined 
modular morphism 
$$
\Phi:\ \tilde{\calt}(e,n,m,s)\hookrightarrow 
\mathcal{M}(e,n,m-2s),\ \ \ \ \ \ t\mapsto[E(t)].
$$ 
$\Phi$ is clearly an embedding, since the data $x=([F],(l,S),
[\phi_x])$ are recovered uniquely from $[E(t)]: F\simeq 
E(t)^{\vee\vee},\ S=\supp(E(t)^{\vee\vee}/E(t))$ 
and $\phi$ is the canonical quotient morphism
$E(t)^{\vee\vee}\twoheadrightarrow\calo_S\simeq 
E(t)^{\vee\vee}/E(t)$. We thus set
\begin{equation}
\calt(e,n,m,s):=\Phi(\tilde{\calt}(e,n,m,s))
\simeq\tilde{\calt}(e,n,m,s).
\end{equation} 
Let $\mathrm{T}(e,n,m,s)$ be the closure $\overline
{\calt(e,n,m,s)}$ of $\calt(e,n,m,s)$ in 
$\calm(e,n,m-2s)$. Formula \eqref{dim R*} yields:
\begin{equation}\label{dimfamilymixed1}
\dim\mathrm{T}(e,n,m,s)=\dim\calt(e,n,m,s)=8n+4s+2e-3.
\end{equation}

Respectively, let $r\geq e$. For a triple $([F],(l,S))\in
\left(\calr^{*}(e,n,m)\times G(2,4)\times(\PP)_0^s\right)_0$, 
set 
\begin{equation}\label{QlSr}
Q_{(l,S),r}:=\calo_S\oplus i_{*}\calo_l(r), 
\end{equation}
where $i:
l\hookrightarrow\PP$ is a closed immersion. Consider the 
$(2r+2s+2-e)$-dimensional vector space $\Hom(F,Q_{(l,S),r})$ 
and its open dense subset $\Hom(F,Q_{(l,S),r})_e$ of 
epimorphisms $F\twoheadrightarrow Q_{(l,S),r} $. By 
construction, the group $\mathrm{Aut}(Q_{(l,S),r})$ acts on \linebreak
$\Hom(F,Q_{(l,S),r})_e$, and it follows that the quotient space 
$$ \Hom(F,Q_{(l,S),r})_e/\mathrm{Aut}(Q_{(l,S),r}) $$
is a smooth irreducible scheme isomorphic to a product of 
projective spaces:
\begin{equation}\label{descr fibre}
\begin{split}
&\Hom(F,Q_{(l,S),r})_e/\mathrm{Aut}(Q_{(l,S),r})\simeq\mathbf
{P}^{2r+1-e}_l\times\prod_{i=1}^{s}\mathbb{P}^1_{q_i},\\  
& \dim\Hom(F,Q_{(l,S),r})_e/\mathrm{Aut}(Q_{(l,S),r})=
2r+s+1-e,
\end{split}
\end{equation}
and where 
\begin{equation}\label{Pl}
\mathbf{P}^{2r+1-e}_l=\Hom(F,i_{*}\calo_l
(r))_e/\mathrm{Aut}(i_{*}\calo_l(r))
\end{equation} 
and  $\mathbb{P}^1_{q_i}$ are the same as in \eqref{descr 
fibre 1}.

For any element $\phi\in\Hom(F,Q_{(l,S),r})_e$ the torsion 
free sheaf $E_{\phi}:=\ker\phi$ is stable, and defines a 
closed point in $\mathcal{M}(e,n+1,m-2r-2s-2-e)$. 
Furthermore, $E_{\phi}\simeq E_{\phi^{\prime}}$ if, and only 
if, there is a $g\in\mathrm{Aut}(Q_{(l,S),r})$ such that 
$\phi=g\circ\phi^{\prime}$. Denote by $[\phi]$ the 
equivalence class of $\phi$ modulo $\mathrm{Aut}
(Q_{(l,S),r})$ and consider the set
\begin{equation}\label{familyX}
\begin{split}
& \tilde{\mathcal{X}}(e,n,m,r,s):=\{x= 
([F],(l,S),[\phi_x])~|~([F],(l,S))\in\left(\calr^{*}(e,n,m)
\times G(2,4)\times(\PP)_0^s\right)_0,\\
&[\phi_x]\in\Hom(F,Q_{(l,S),r})_e/\mathrm{Aut}(Q_{(l,S),r})\}.
\end{split}
\end{equation}
By definition, $\tilde{\mathcal{X}}(e,n,m,r,s)$ is fibered 
over
$\left(\calr^{*}(e,n,m)\times G(2,4)\times(\PP)_0^s\right)_0$
with fiber\\ $\Hom(F,Q_{(l,S),r})_e/\mathrm{Aut}(Q_{(l,S),r})$
over a given point $([F],(l,S))$. Thus by \eqref{dim R*} and
\eqref{descr fibre}  $\tilde{\mathcal{X}}(e,n,m,r,s)$ is
naturally endowed with a structure of smooth equidimensional
scheme of dimension
\begin{equation}\label{dim tilde X}
\dim\tilde{\mathcal{X}}(e,n,m,r,s)=8n+4s+2r+e+2,
\end{equation}
and the number of irreducible components of $\tilde
{\mathcal{X}}(e,n,m,r,s)$ equals the number of those of 
$\calr^{*}(e,n,m)$.
Furthermore, for any point 
$$ x=([F],(l,S),[\phi_x])\in\tilde
{\mathcal{X}}(e,n,m,r,s), $$ the sheaf $E(x):=\ker\{ \phi: F
\twoheadrightarrow Q_{(l,s),r}\}$ is a stable sheaf from
$\mathcal{M}(e,n+1,m+2+e-2r-2s)$. Hence there is a 
well-defined modular morphism 
$$
\Psi:\ \tilde{\mathcal{X}}(e,n,m,r,s)\hookrightarrow 
\mathcal{M}(e,n+1,m+2+e-2r-2s),\ \ \ \ \ \ x\mapsto[E(x)].
$$ 
$\Psi$ is clearly an embedding, since the data $x=([F],(l,S),
[\phi_x])$ are recovered uniquely from $[E(x)]: F\simeq 
E(x)^{\vee\vee},\ l\sqcup S=\supp(E(x)^{\vee\vee}
/E(x))$ and $\phi$ is the canonical quotient morphism
$E(x)^{\vee\vee}\twoheadrightarrow Q_{(l,S),r}\simeq 
E(x)^{\vee\vee}/E(x)$. We thus set
\begin{equation}\label{def calX}
\mathcal{X}(e,n,m,r,s):=\Psi(\tilde{\mathcal{X}}(e,n,m,r,s))
\simeq\tilde{\mathcal{X}}(e,n,m,r,s).
\end{equation} 
Let $\mathrm{X}(e,n,m,r,s)$ be the closure $\overline{\mathcal
{X}(e,n,m,r,s)}$ of $\mathcal{X}(e,n,m,r,s)$ in $\calm(e,n+1,
m+2+e-2r-2s)$. Formula \eqref{dim tilde X} yields:
\begin{equation}\label{dimfamilymixed}
\dim\mathrm{X}(e,n,m,r,s)=\dim\mathcal{X}(e,n,m,r,s)= 
8n+4s+2r+2+e.
\end{equation}
\begin{Remark}\label{X(-1,1,1,-1,0)}
By Remark \ref{R(-1,1,1)}, $\calr^*(-1,1,1)=
\calr(-1,1,1)$ is smooth irreducible of the expected 
dimension 
3. Thus \eqref{dimfamilymixed} yields that $\mathrm{X}(-1,1,1,
-1,0)$ is an irreducible scheme of dimension 7.
\end{Remark} 

We now prove the following general result about the schemes $\mathrm{T}(e,n,m,s)$.

\begin{Teo}\label{0dcomp}
Given $s>0$, we have:
\begin{itemize}
\item[(i)] For any nonsingular irreducible component 
$\calr^*$ of of $\calr(e,n,m)$ there corresponds an 
irreducible component of $\mathrm{T}(e,n,m,s)$ of dimension 
$8n-3+2e+4s$ which is also an irreducible component of 
$\calm(e,n,m-2s)$. In particular, if $\calr(e,n,m)$ is 
irreducible, then $\mathrm{T}(e,n,m,s)$ is also irreducible.
\item[(ii)] The generic sheaf $[E]$ of any irreducible 
component of $\mathrm{T}(e,n,m,s)$ satisfies the conditions 
that $[E^{\vee\vee}]\in\calr^*(e,n,m)$ and 
$Q_E=E^{\vee\vee}/E$ is an artinian sheaf of length $s$.
\end{itemize}
\end{Teo}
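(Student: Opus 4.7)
The plan is to show that each nonsingular irreducible component $\calr^{*}$ of $\calr(e,n,m)$ yields, through the construction of $\tilde{\calt}(e,n,m,s)$ preceding the theorem, an irreducible closed subset $\mathrm{T}^{*}$ of $\mathrm{T}(e,n,m,s)$ which is also an irreducible component of $\calm(e,n,m-2s)$. The strategy is a tangent-space calculation: once we show $\dim\ext^1(E,E)=\dim\mathrm{T}^{*}=8n-3+2e+4s$ at a generic $[E]\in\mathrm{T}^{*}$, maximality of $\mathrm{T}^{*}$ in $\calm(e,n,m-2s)$ is forced.

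First, since $\calr^{*}$ is smooth of the expected dimension $8n-3+2e$, Theorem \ref{dimext1} gives $\ext^2(F,F)=0$ for every $[F]\in\calr^{*}$, so $\calr^{*}\subset\calr^{*}(e,n,m)$ as defined in \eqref{def R*}. Consequently the construction of $\tilde{\calt}(e,n,m,s)$ restricts over $\calr^{*}$ to a smooth irreducible scheme which embeds via $\Phi$ as a locally closed irreducible subset of $\calm(e,n,m-2s)$, with irreducible closure $\mathrm{T}^{*}$ of dimension $8n-3+2e+4s$ by \eqref{dimfamilymixed1}.

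The main step is computing $\dim\ext^2(E,E)=4s$ at a generic $[E]\in\mathrm{T}^{*}$. Such an $E$ fits into a short exact sequence $0\to E\to F\to\calo_S\to 0$ with $[F]\in\calr^{*}$ and $S\cap\sing(F)=\emptyset$, so Proposition \ref{important}(iii), in the $T=0$ version recorded in the remark immediately following it, yields $\ext^2(E,E)\simeq\h^0(\lext^3(\calo_S,E))$. At each $p\in S$ the local structure of the elementary transformation gives $E_p\simeq\mathfrak{m}_p\oplus\calo_{\PP,p}$, since $F$ is free of rank $2$ at $p$ and the surjection $F_p\twoheadrightarrow\calo_p$ picks out a single generator of $F_p$. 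The Koszul resolution of $\calo_p$ then yields
$$\lext^3(\calo_p,E)_p=E_p/\mathfrak{m}_pE_p\simeq(\mathfrak{m}_p/\mathfrak{m}_p^2)\oplus(\calo_{\PP,p}/\mathfrak{m}_p),$$
which has dimension $4$; summing over $p\in S$ gives $\dim\ext^2(E,E)=4s$. Theorem \ref{dimext1} then forces $\dim\ext^1(E,E)=4s+8n-3+2e=\dim\mathrm{T}^{*}$, so $[E]$ is a smooth point of $\calm(e,n,m-2s)$ and $\mathrm{T}^{*}$ coincides with the irreducible component of $\calm(e,n,m-2s)$ through $[E]$, proving (i). The in-particular statement then follows: if $\calr(e,n,m)$ is irreducible and $\calr^{*}(e,n,m)$ is nonempty, then $\calr^{*}(e,n,m)$ is irreducible as an open dense subset, and hence so are $\tilde{\calt}(e,n,m,s)$ and $\mathrm{T}(e,n,m,s)$ by the fiber-bundle structure of the construction.

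Part (ii) follows directly from the construction: every irreducible component of $\mathrm{T}(e,n,m,s)$ is the closure of a component of $\calt(e,n,m,s)$ which fibers over a component of $\calr^{*}(e,n,m)\times(\PP)_0^s$, so by the injectivity of $\Phi$ the generic sheaf satisfies $F\simeq E^{\vee\vee}\in\calr^{*}(e,n,m)$ and $\calo_S\simeq Q_E$ of length $s$. The main obstacle I anticipate is invoking Proposition \ref{important}(iii) in the $T=0$ case, which slightly exceeds the stated hypothesis of pure 1-dimensional $T$ and needs the remark immediately following the proposition to be rigorous; the local Koszul computation at each point of $S$ is then elementary but is what pins down the precise dimension $4s$.
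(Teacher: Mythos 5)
Your proposal is correct and follows essentially the same route as the paper: reduce to showing $\dim\ext^2(E,E)=4s$ via Proposition \ref{important}(iii) in its $T=0$ form, localize $\lext^3(\calo_S,E)$ at the points of $S$ where $E_p\simeq\mathfrak{m}_p\oplus\calo_{\PP,p}$, and conclude by Theorem \ref{dimext1} and deformation theory. The only (harmless) difference is that you compute the local contribution of $4$ directly from the Koszul resolution as $\dim E_p/\mathfrak{m}_pE_p$, whereas the paper splits it as $\lext^3(\calo_q,\cali_{q,U})\oplus\lext^3(\calo_q,\calo_U)$ and cites the length-$3$ computation from \cite{JMT}.
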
 
\begin{proof}
For the statement (i) of Theorem, it is enough to prove that, 
for each $[E(t)]\in\calt(e,n,m,s)$, $\ext^2(E(t),E(t))=4s$. 
Indeed, in this case Theorem \ref{dimext1} yields that $\dim 
\ext^1(E(t),E(t))=8n-3+2e+4s=$, and this dimension coincides  
with the dimension of $\mathrm{T}(e,n,m,s)$ by 
\eqref{dimfamilymixed1}, and therefore by the deformation 
theory any irreducible component of $\mathrm{T}(e,n,m,s)$ is 
an irreducible component of $\calm(e,n,m-2s)$.  

From Proposition \ref{important} we have $$ \dim\ext^2(E(t),
E(t))=h^0(\lext^3(Q,E(t))), $$
where $Q=E(t)^{\vee\vee}/E(t)$. 
To compute this group, note that, since, by the definition of
$\calt(e,n,m,s)$, $Q=\calo_S$, where $S=\{q_1,\dots,q_s\}\in
(\PP)_0^s$ and $S\cap\sing(E(t)^{\vee\vee})=\emptyset$, we 
have
\begin{equation}\label{sum of exts}
\ext^2(E(t),E(t)) \simeq \h^0(\lext^3(Q, E(t)))\simeq\bigoplus_{q_i\in S} 
\ext^3_{\calo_{\PP,q_i}}(\calo_{q_i},E(t)_{q_i}).
\end{equation}
Take a point $q=q_j$ for some $1\le j\le s$, and an open 
subset
$U$ in $\PP$ not containing other points of $\sing(E(t))$.
Consider the exact sequence $0\to E(t)\to E(t)^{\vee\vee}\to 
Q\to0$ and restrict it onto $U$. We then obtain the following 
exact sequence of sheaves on $U$:
$$ 
0\to\calo_U\oplus\cali_{q,U}\to2\cdot\calo_{U}\to\calo_q\to0, 
$$
where $\cali_{q,U}$ denotes the ideal sheaf of the point $p\in 
U$ and $\calo_q$ denotes the structure sheaf of the point 
$q$ as a subscheme of $U$. In particular, $E(t)|_U\simeq
\calo_U\oplus \cali_{q,U}$, so that
\begin{equation}\label{dir sum}
\ext^3_{\calo_{\PP,q}}(\calo_q,E(t)_q)\simeq\h^0(
\lext^3_{\calo_{U}}(\calo_q, \cali_{q,U}))\oplus 
\h^0(\lext^3_{\calo_{U}}(\calo_q,\calo_U)). 
\end{equation}
Applying the functor $\lhom(-,\cali_{q,U})$ to the sequence 
$0 \to \cali_q\to\calo_{U}\to\calo_q\to 0$, we obtain: 
$\lext^3_{\calo_{U}}(\calo_q, \cali_{q,U}) \simeq \lext^2_
{\calo_{U}}(\cali_{q,U}, \cali_{q,U})$. The last sheaf is an 
artinian sheaf of length $3$ by the proof of \cite
[Proposition 6]{JMT}. Thus, since $\lext^3_{\calo_{U}}
(\calo_q,\calo_U))\simeq \calo_q $, it follows from
\eqref{dir sum} that each point in S contributes with 4 to 
the dimension of $\ext^2(E(t), E(t))$, hence, by \eqref{sum of exts}, $\dim \ext^2
(E(t),E(t))=4s$. The other claims in the statement of Theorem are clear from the definition of $\mathrm{T}(e,n,m,s)$.
\end{proof}

We next proceed to a general result about the schemes 
$\mathrm{X}
(e,n,m,r,s)$.

\begin{Teo}\label{NewComponentsmixed} 
 Let  $e, n, m, r, s$ be integers such that 
$e\in\{-1,0\}$, 
$n,\ m>0$, $r\ge e$ and $s\ge0$. Then the scheme  $\mathrm{X}
(e,n,m,r,s)$ is equidimensional of dimension $8n+4s+2r+2+e$,
and the number of irreducible components of $\mathrm{X}
(e,n,m,r,s)$ is the same as that of $\calr^{*}(e,n,m)$. 
Furthermore, $\mathrm{X}(e,n,m,r,s)$ contains a dense open
subset $\mathcal{X}(e,n,m,r,s)$ such that, for $[E]\in
\mathcal{X}(e,n,m,r,s)$, the following statements hold.
\begin{itemize}
\item[(i)]  If $r\ge1$, then $\dim\ext^1(E,E)=8n+4s+2r+2+e=
\dim\mathrm{X}(e,n,m,r,s)$. Hence, if $\calr^{*}(e,n,m)$ is 
irreducible, then $\mathrm{X}(e,n,m,r,s)$ is an irreducible 
$(8n+4s+2r+2+e)$-dimensional component of $\calm(e,n+1,
m+2+e-2r-2s)$.
\item[(ii)] If $0\ge r\ge e$, then $\dim\ext^1(E,E)=8n+4s+5+
2e>\dim\mathrm{X}(e,n,m,r,s)$.
\end{itemize}
\end{Teo}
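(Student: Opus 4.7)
The equidimensionality of $\mathrm{X}(e,n,m,r,s)$ and the count of its irreducible components are immediate from the construction in \eqref{familyX}--\eqref{def calX}: $\tilde{\mathcal{X}}(e,n,m,r,s)$ is a Zariski-locally trivial fibration over the smooth, equidimensional base $\left(\calr^{*}(e,n,m)\times G(2,4)\times(\PP)^s_0\right)_0$ with irreducible fiber (a product of projective spaces by \eqref{descr fibre}). Since $G(2,4)$ and $(\PP)^s_0$ are irreducible, the base has the same number of connected components as $\calr^{*}(e,n,m)$; because $\Psi$ is an embedding the same holds for $\mathrm{X}$, and the claimed dimension follows from adding the base and fiber dimensions.

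The substance of the theorem is the computation of $\dim\ext^1(E,E)$ at a generic point $[E]\in\mathcal{X}(e,n,m,r,s)$. By Proposition \ref{important}(iii) applied with $T=i_{*}\calo_l(r)$, one has $\ext^2(E,E)\simeq H^0(\lext^3(\calo_S,E))\oplus\ext^3(T,E)$, where the first summand contributes $4s$ by the pointwise calculation in the proof of Theorem \ref{0dcomp}. Combined with Theorem \ref{dimext1} applied with $c_2(E)=n+1$, the whole problem thus reduces to evaluating $\ext^3(T,E)$.

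For this, I first describe the restriction $E|_l=i^{*}E$. Tensoring the fundamental sequence $0\to E\to F\to T\to 0$ (which is exact in a neighborhood of $l$, since $S\cap l=\emptyset$) with $\calo_l$, and using that $F$ is locally free near $l$ together with the normal-bundle identity $\mathrm{Tor}_1^{\calo_{\PP}}(i_{*}\calo_l(r),\calo_l)\simeq 2\calo_l(r-1)$ (from $N_{l/\PP}\simeq\calo_l(1)^{\oplus 2}$), one obtains on $l$ the short exact sequence
$$
0\to 2\calo_l(r-1)\to E|_l\to \calo_l(e-r)\to 0,
$$
the quotient being $\ker(F|_l\to T|_l)$ and using $F|_l\simeq\calo_l(e)\oplus\calo_l$ from the definition of $\mathcal{X}_{[F]}$. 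Next, from the Koszul resolution of $T$ one computes $\lext^2(T,E)\simeq i_{*}(E|_l(2-r))$, while the other $\lext^j(T,E)$ contribute nothing to $\ext^3$ in the local-to-global spectral sequence (they are supported on the curve $l$, so $H^{\geq 2}$ of them vanishes). Hence $\ext^3(T,E)\simeq H^1(l,E|_l(2-r))\simeq H^1(\calo_l(e-2r+2))$.

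Plugging in and exploiting the identity $e(e+1)=0$ valid for $e\in\{-1,0\}$ (equivalently, $-2e^2=2e$): in case (i), $r\ge 1$, the relevant $H^1$ equals $2r-3-e$, so $\dim\ext^1(E,E)=8n+4s+2r+2+e=\dim\mathrm{X}$, and deformation theory forces $\mathrm{X}$ to be smooth of the expected dimension at $[E]$, hence an irreducible component of $\calm(e,n+1,m+2+e-2r-2s)$; in case (ii), $r\le 0$ makes $e-2r+2\ge 0$, so $H^1(\calo_l(e-2r+2))=0$, and one gets $\dim\ext^1(E,E)=8n+4s+5+2e$, strictly larger than $\dim\mathrm{X}$. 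The main technical obstacle is the careful Tor-and-twist bookkeeping in restricting $E$ to $l$ and verifying the vanishings that make the local-to-global spectral sequence collapse to the single term $H^1(l,E|_l(2-r))$.
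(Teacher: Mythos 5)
Your proposal is correct and follows essentially the same route as the paper: the equidimensionality and component count from the fibration structure of $\tilde{\mathcal{X}}(e,n,m,r,s)$, the reduction via Proposition \ref{important}(iii) to the two summands $4s$ and $\ext^3(i_*\calo_l(r),E)$, and the same restriction sequence $0\to2\calo_l(r-1)\to E|_l\to\calo_l(e-r)\to0$ obtained from the Tor computation. The only (cosmetic) divergence is in the last step, where the paper evaluates $\ext^3(i_*\calo_l(r),E)$ by Serre duality as $\h^0(\lhom(E,i_*\calo_l(r-4)))$ while you use the local-to-global spectral sequence to identify it with $H^1(l,E|_l(2-r))$; both yield the same count.
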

\begin{proof} The first claim  follows from 
\eqref{dimfamilymixed} and the above considerations. For the
statements (i) and (ii), consider any sheaf $[E]\in\mathcal
{X}(e,n,m,r,s)$. By definition $[E]$ defines a line $l$ and a 
set of $s$ points $S$ considered as a reduced scheme as: 
$l\sqcup S=\supp(E^{\vee\vee}/E)$. Note that, by 
Proposition \ref{important}.(iii) in which we set $T=i_*
\calo_l(r),\ Z=\calo_S$, where $i:Z\hookrightarrow\PP$ is the 
embedding, one has
\begin{equation}\label{dim ext2}
\dim\ext^2(E,E)=h^0(\lext^3(\mathcal{O}_S,E))+\dim 
\ext^3(i_{*}\mathcal{O}_l(r),E).
\end{equation}
First, one has 
\begin{equation}\label{h0=}
h^0(\lext^3(\mathcal{O}_S,E))=4s.
\end{equation}
The proof of this equality is given in \cite[Proof of 
Prop. 6]{JMT} in the case $e=0$. However, since $\lext^2(E,E)
$ is 0-dimensional, the computation of $h^0(\lext^2(E,E))$ 
is purely local, and gives the same result for $e=-1$.
Next, $\ext^3(i_{*}\mathcal{O}_l(r),E)\simeq 
\Hom(E,i_{*}\mathcal{O}_l(r-4))^{\vee}$ by Serre duality, and
$$ \Hom(E,i_{*}\mathcal{O}_l(r-4))\simeq\h^0(\lhom(E,i_{*}
\mathcal{O}_l(r-4))). $$
To compute $h^0(\lhom(E,i_{*}
\mathcal{O}_l(r-4)))$, 
apply the functor $i^{*}(-\otimes i_{*}\mathcal{O}_l)$ to the triple
$$ 0\to E\to E^{\vee\vee}\to\calo_S\oplus 
i_*\calo_l(r)\to0. $$
Using the fact that $E^{\vee\vee}|_{l}
\simeq\calo_l\oplus\calo_l(e)$ we obtain the exact sequence
$$ 0\to i^{*}Tor_1(i_{*}\mathcal{O}_l(r),i_{*}\mathcal{O}_l)\to 
E|_l\xrightarrow{f}\mathcal{O}_l(e)\oplus\mathcal{O}_l 
\xrightarrow{g}\mathcal{O}_l(r)\to 0. $$
Whence $\ker g\simeq \calo_l(-r+e)$, and since 
$i^{*}Tor_1(i_{*}\mathcal{O}_l(r),i_{*}\mathcal{O}_l)\simeq 
N_{l/\PP}^{\vee}\otimes\mathcal{O}_l(r) \simeq 
2\cdot \mathcal{O}_l(r-1)$, we obtain an exact sequence 
$0\to2\cdot \mathcal{O}_l(r-1)\to E|_l\to\calo_l(-r+e)\to0$.
This triple easily implies that $h^0(\lhom(E,i_{*}
\mathcal{O}_l(r-4)))$ equals $2r-3-e$ for $r\ge1$ and, 
respectively, equals 0 for $0\ge r\ge e$. This together with
\eqref{dim ext2}, \eqref{h0=} and Theorem \ref{dimext1} with
$c_2(E)=n+1$ yields the statements (i) and (ii) of Theorem.
\end{proof}

We conclude this section with our first application of 
Theorem \ref{NewComponentsmixed}. 


The case of moduli spaces $\calm(-1,n,0)$ is interesting from
the point of view that it contains, among others, all those 
irreducible components that have locally free sheaves (i. e.,
vector bundles) as their generic points. Ein had shown in
\cite{Ein} that the number of these components for given 
number $n$ is unbounded as $n$ grows infinitely. Therefore, 
it is important to understand whether the components of 
$\calm(-1,
n,0)$ with non-locally free sheaves as their generic points 
satisfy the similar property. In this section we give an 
affirmative answer to this question in Theorem \ref{einmixed}
below. Namely, the components $\mathrm{X}(-1,n,m,r,s)$ 
described in Theorem \ref{NewComponentsmixed} will serve for
this purpose, with the numbers $n,m,r,s$ chosen 
appropriately.

\begin{Teo}\label{einmixed}
Let $\eta_n$ and $\xi_n$ denote the number of irreducible 
components of $\mathcal{M}(-1,n,0)$ whose generic point 
corresponds to a non-locally free sheaf with mixed 
singularities and with 1-dimensional singularities, 
respectively. Then 
$$\lim\sup_{n\to\infty}\eta_n = 
\lim\sup_{n\to\infty}\xi_n = \infty. $$
\end{Teo}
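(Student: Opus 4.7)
The strategy is to apply Theorem~\ref{NewComponentsmixed} repeatedly for $e=-1$ and a range of parameters $(n_0,m,r,s)$. Specializing that theorem to $e=-1$, the scheme $\mathrm{X}(-1,n_0,m,r,s)$ sits in $\calm(-1,n_0+1,m+1-2r-2s)$ and has dimension $8n_0+4s+2r+1$; moreover, by Theorem~\ref{NewComponentsmixed}(i), whenever $\calr^{*}(-1,n_0,m)\neq\emptyset$ and $r\ge 1$, it is actually an irreducible component of this moduli space. Setting $N:=n_0+1$, the components lying in $\calm(-1,N,0)$ are exactly those with $m+1=2r+2s$; in that case their dimension rewrites as $8N-6+m+2s$. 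Thus within a single reflexive family (fixed $m$) varying $s$ yields irreducible components of strictly different dimensions, and varying $m$ with $s=0$ also yields components of strictly different dimensions. In either setting, the resulting components are pairwise distinct irreducible components of $\calm(-1,N,0)$.

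For $\limsup_{n\to\infty}\eta_n=\infty$, I would fix $n_0$ odd (to satisfy the parity condition $-n_0\equiv m\pmod 2$) and sufficiently large, and invoke the cited results of Hartshorne and Schmidt: $\calm(-1,n_0,n_0^2)$ is irreducible, and every torsion free sheaf with these Chern classes is forced to be reflexive (since $c_3(E)\ge c_3(E^{\vee\vee})$ and $n_0^2$ is the extremal value of $c_3$ for reflexive sheaves with $c_1=-1$, $c_2=n_0$). Hence $\calr(-1,n_0,n_0^2)$ is irreducible of the expected dimension $8n_0-5$, and combining Corollary~\ref{dim calm} with Theorem~\ref{dimext1} forces $\ext^2(F,F)=0$ at a generic $[F]$, so $\calr^{*}(-1,n_0,n_0^2)\neq\emptyset$. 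Applying Theorem~\ref{NewComponentsmixed}(i) with $m=n_0^2$ and $(r,s)=\bigl((n_0^2+1)/2-s,\,s\bigr)$ for $s=1,\dots,\lfloor(n_0^2-1)/2\rfloor$ produces that many pairwise distinct mixed-singularity irreducible components of $\calm(-1,n_0+1,0)$, giving $\eta_{n_0+1}\ge\lfloor(n_0^2-1)/2\rfloor\to\infty$.

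For $\limsup_{n\to\infty}\xi_n=\infty$, the same trick fails because $s=0$ forces $r=(m+1)/2$, so a single reflexive family contributes just one $1$-dimensional component. I would therefore exploit Mir\'o-Roig's infinite supply of reflexive moduli components $\calr^{*}(-1,n_0,m_k)$ with $m_k:=n_0^2-2kn_0+2k(k+1)$, defined and of the expected dimension (hence, as above, contained in $\calr^{*}$) for $1\le k\le (-1+\sqrt{4n_0-7})/2$. For $n_0$ odd each $m_k$ is odd and positive, and Theorem~\ref{NewComponentsmixed}(i) yields irreducible components $\mathrm{X}(-1,n_0,m_k,(m_k+1)/2,0)$ of $\calm(-1,n_0+1,0)$ of dimension $8n_0+m_k+2$ whose generic sheaves have purely $1$-dimensional singularities. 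Since $k\mapsto m_k$ is strictly monotone in the relevant range, the resulting components have pairwise distinct dimensions, so they are distinct, giving $\xi_{n_0+1}\ge\lfloor(-1+\sqrt{4n_0-7})/2\rfloor\to\infty$ along odd $n_0$.

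The only non-formal step in this plan is the verification that $\calr^{*}(-1,n_0,m)\neq\emptyset$ in each family that we use. By Theorem~\ref{dimext1} combined with Corollary~\ref{dim calm}, generic unobstructedness is equivalent to the reflexive moduli component attaining the expected dimension $8n_0-5$, which is exactly what is established in the cited papers of Hartshorne, Schmidt, and Mir\'o-Roig. Everything else---the dimension formula, the identification of the type of singularities at the generic point, and the pairwise distinctness of the components---follows directly from Theorem~\ref{NewComponentsmixed} and the dimension computation above.
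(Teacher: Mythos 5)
There is a genuine gap, and it sits exactly at the step you flag as ``the only non-formal step'': the verification that $\calr^{*}(-1,n_0,m)\neq\emptyset$ for the reflexive families you use. You assert that $\calr(-1,n_0,n_0^2)$ and Mir\'o-Roig's components $\calr(-1,n_0,m_k)$ attain the expected dimension $8n_0-5$, ``which is exactly what is established in the cited papers.'' It is not: by the dimension formulas recorded in \eqref{dim R -1}, $\dim\calr(-1,n,n^2)=n^2+3n+1$ and $\dim\calr(-1,n,n^2-2rn+2r(r+1))=n^2+(3-2r)n+2r^2+5$ (resp.\ $n^2+n+6$ for $r=1$), and these strictly exceed $8n-5$ for all large $n$. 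Since $\dim_{[F]}\calr\le\dim\ext^1(F,F)$ and Theorem \ref{dimext1} gives $\dim\ext^1(F,F)=8n-5+\dim\ext^2(F,F)$, every sheaf in these components has $\ext^2(F,F)\neq0$; as the components are irreducible, they are entirely disjoint from $\calr^{*}(-1,n,m)$ as defined in \eqref{def R*}. (Corollary \ref{dim calm} only bounds the dimension from below, so irreducibility of $\calm(-1,n_0,n_0^2)$ does not force the expected dimension.) Consequently Theorem \ref{NewComponentsmixed} simply does not apply to either of your two constructions, and both halves of the argument collapse at the same point.

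The formal part of your plan is fine — the dimension bookkeeping $8N-6+m+2s$, the observation that distinct dimensions (varying $s$) or distinct $c_3(E^{\vee\vee})$ (varying $m$) force distinct components, and the correct diagnosis that the $\xi_n$ case needs many reflexive families while the $\eta_n$ case can live off one. What is missing is a supply of reflexive components that genuinely lie in $\calr^{*}$. The paper obtains these from Theorem \ref{thm newclass}: the components $\mathcal{S}(a,b,c)$ are nonsingular of the expected dimension $8n-5$ with $\ext^2(F,F)=0$ by construction, and the choice $n_q=9q^2-9q+1$ with $(a_{q,i},b_{q,i},c_{q,i})=(i,3q-3i-3,3i+1)$, $0\le i\le q-1$, produces $q$ such components sharing the same $c_2=n_q$ but with pairwise distinct $c_3=m_{q,i}$, which is what makes both the mixed and the $1$-dimensional counts go to infinity. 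If you insist on using the extremal and Mir\'o-Roig families, you would have to replace Theorem \ref{NewComponentsmixed} by Theorem \ref{Tmain2}, whose proof is specifically engineered to dispense with the hypothesis $\ext^2(F,F)=0$ — but that is a substantially different (and later) argument, not a citation you can make for free here.
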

\begin{proof}
For any odd integer $q\ge1$ set $n_q=9q^2-9q+1$, and for any 
integer $i$ such that $0\le i\le q-1,$ set $a_{q,i}=i,\ 
b_{q,i}=3q-3i-3,\ c_{q,i}=3i+1$. Then, according to Theorem 8 
of \cite{JMT}, for an odd integer $m_{q,i}=m(a_{q,i},b_{q,i},
c_{q,i})$ given by (\ref{m(a,b,c)}) the scheme 
$\calr^*(-1;n_q,
m_{q,i})$ defined in \eqref{def R*} is nonempty. Thus, by 
Theorem \ref{NewComponentsmixed}, to any integers $s$ and $r$ 
such that $0\leq s\leq n_q-1$, $r=\frac{1}{2}(m_{q,i}+1-2s)$, 
there corresponds an equidimensional union $\mathrm{X}(-1,
n_q-1,m_{q,i},r, s)$ of irreducible components of $\calm
(-1,n_q,0)$ of dimension $8n_q+2s+m_{q,i}+3$, where the 
number of irreducible components of $\mathrm{X}(-1,n_q-1,m_
{q,i},r,s)$ is the same as that of $\calr^*(-1;n_q,m_{q,i})$. 
Therefore, since $0\le i\le q-1$, for each odd $q$ we obtain 
at least $q$ different  irreducible components of $\mathrm{X}
(-1,n_q-1,m_{q,i},r,s),\ i=0,...,q-1,$ which are irreducible 
components of $\calm(-1,n_q,0)$, generic points of which are 
sheaves with mixed singularities. Taking $s=0$ we obtain the 
claim about sheaves with 1-dimensional singularities.
\end{proof}

A similar result also holds for sheaves with 0-dimensional 
singularities. The proof is very similar to \cite[Theorem 
9]{JMT}, using the series of components of $\calr(-1,n,m)$ 
produced in Theorem \ref{thm newclass}. More precisely:

\begin{Teo}\label{ein0}
Let $\zeta_n$ denote the number of irreducible components
of $\mathcal{M}(-1,n,0)$ whose generic point corresponds to a non-locally free sheaf with
0-dimensional singularities. Then 
$$\lim\sup_{n\to\infty}\zeta_n  = \infty. $$
\end{Teo}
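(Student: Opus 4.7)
The plan is to mimic the strategy of Theorem \ref{einmixed} but substituting the purely $0$-dimensional singularities family $\mathrm{T}(-1,n,m,s)\subset\calm(-1,n,m-2s)$ from Theorem \ref{0dcomp} for the mixed-singularities family $\mathrm{X}(-1,n,m,r,s)$ used there. To land in $\calm(-1,n,0)$ one takes $s=m/2$, so $m$ must be even; by the parity constraint $n\equiv m\pmod{2}$ for stable rank $2$ reflexive sheaves with $c_{1}=-1$, this forces $n$ even as well. The scheme is therefore to exhibit, for a growing sequence of even values of $n$, an unbounded number of pairwise distinct nonsingular irreducible components of $\calr(-1,n,m)$ (as $m$ varies) produced by Theorem \ref{thm newclass}.

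The family of triples I would use is the following: for each integer $q\ge 3$ and each $i$ with $1\le i\le q-2$, set
$$
(a_{q,i},b_{q,i},c_{q,i}):=(i,\ 3(q-i-1),\ 3i+3),
$$
all of whose entries are positive. A direct check gives $3a_{q,i}+2b_{q,i}+c_{q,i}=6q-3$, which is odd, so Theorem \ref{thm newclass} produces a nonsingular irreducible component $\s(a_{q,i},b_{q,i},c_{q,i})\subset\calr(-1,n_q,m_{q,i})$ of the expected dimension $8n_q-5$, where $n_q=(3q-1)^{2}+(3q-3)=9q^{2}-3q-2$ is independent of $i$ and even, and $m_{q,i}:=m(a_{q,i},b_{q,i},c_{q,i})$ is given by the formula \eqref{m(a,b,c)}. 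Nonsingularity of $\s(a_{q,i},b_{q,i},c_{q,i})$ at its generic point, combined with Theorem \ref{dimext1}, forces $\ext^{2}(F,F)=0$ there, so this component is contained in the open subset $\calr^{*}(-1,n_q,m_{q,i})$ of \eqref{def R*}. Applying Theorem \ref{0dcomp}(i) with $s=m_{q,i}/2$ then yields an irreducible component $\mathrm{T}(-1,n_q,m_{q,i},m_{q,i}/2)\subset\calm(-1,n_q,0)$ whose generic sheaf $[E]$ satisfies $[E^{\vee\vee}]\in\s(a_{q,i},b_{q,i},c_{q,i})$ and $Q_{E}$ of length $m_{q,i}/2$; in particular $c_{3}(E^{\vee\vee})=m_{q,i}$.

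The main obstacle is to ensure that, for fixed $q$, the components so obtained for different values of $i$ are pairwise distinct. It suffices to show that the map $i\mapsto m_{q,i}$ takes many distinct values on $\{1,\dots,q-2\}$, because distinct $m_{q,i}$'s force the generic $[E^{\vee\vee}]$ of the associated $\mathrm{T}$ component to have distinct third Chern class. Substituting the triple into \eqref{m(a,b,c)} and collecting, a routine computation shows that $m_{q,i}$ is, for fixed $q$, a polynomial in $i$ of degree $3$ with non-zero leading coefficient (equal to $-54$). Since a real cubic takes each value at most three times, the number of distinct values of $m_{q,i}$ on $\{1,\dots,q-2\}$ is at least $\lceil(q-2)/3\rceil$. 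This gives $\zeta_{n_q}\ge\lceil(q-2)/3\rceil$, which tends to infinity as $q\to\infty$, so $\limsup_{n\to\infty}\zeta_n=\infty$, as required.
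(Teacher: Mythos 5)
Your proposal is correct and follows exactly the route the paper intends: the paper only sketches this proof by reference to \cite[Theorem 9]{JMT} and Theorem \ref{thm newclass}, and you supply the missing details — explicit triples $(i,\,3(q-i-1),\,3i+3)$ with $3a+2b+c=6q-3$ odd and $n_q=9q^2-3q-2$ even, followed by Theorem \ref{0dcomp}(i) with $s=m_{q,i}/2$. Your computation that the $i^3$-coefficient of $m_{q,i}$ is $-54$ checks out, so the distinct-values count and the conclusion $\zeta_{n_q}\ge\lceil(q-2)/3\rceil\to\infty$ are valid.
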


\section{Infinite collections of rational moduli components}
\label{Rational Components}

In this section we will construct an infinite collection of
rational moduli components of the spaces $\calm(-1,n,m)$ and 
$\calm(0,n,m)$ with generic sheaves $E$ satisfying
$\dim\sing(E)=0$. This collection will include all previously 
known rational moduli components of $\calm(-1,n,m)$ and 
$\calm(0,n,m)$ whose generic sheaf has the property above. As 
a consequence, we can conclude that the moduli schemes 
$\mathcal{M}(-1,n,m)$ and $\mathcal{M}(0,n,m)$ contain at 
least one rational irreducible components for all $n\ge1$ and 
all admissible $m$.

The desired collection will be constructed via elementary
transformations at sets of points from certain moduli 
components of stable reflexive rank 2 sheaves on 
$\mathbb{P}^3$. For this, we invoke results of Chang 
\cite{Chang,Chang2}, Mir\'o-Roig \cite{Maria-Miro}, 
Okonek--Spindler \cite{OS1985} and Schmidt \cite{S2018}
on the moduli spaces of reflexive sheaves $\mathcal{R}(e,n,m)$
, $e\in\{0,1\}$. These are the results 
concerning the moduli spaces of reflexive sheaves with
Chern classes belonging to the set of triples of integer 
numbers \label{sigmas}
\begin{equation}\label{def sigma}
\Sigma:=\Sigma_{-1}\cup\Sigma_{0},
\end{equation}
where $\Sigma_{-1}$ and $\Sigma_0$ being respectively given by
\begin{equation}\label{def sigma -1}
\{(-1,n,n^2)\ |\ n\ge1\} ~\bigcup~ 
\{(-1,n,n^2-2rn+2r(r+1))\ |\ n\ge5,\ 1\le 
r\le(\sqrt{4n-7}-1)/2\}
\end{equation}
and
\begin{equation}\label{def sigma 0}
\{(0,n,n^2-n+2)\ |\ n\ge3\} ~\bigcup~ \{(0,n,n^2-n)\ |\ 
n\ge4\}~\bigcup~ \{(0,n,n^2-3n+8)\ |\ n\ge5\}.
\end{equation}
According to \cite{Chang2,Maria-Miro,OS1985,S2018},
for each triple $(e,n,m)\in\Sigma$, the moduli space of 
stable rank 2 reflexive sheaves $\mathcal{R}(e,n,m)$ 
satisfies the following properties:\\
(I) Each $R=\mathcal{R}(e,n,m)$ is an irreducible, 
nonsingular and rational scheme, and it is a dense open 
subset of an irreducible component of $\mathcal{M}(e,n,m)$;\\ 
(II) $R$ is a fine moduli space, i. e., there exists a 
universal family of reflexive sheaves 
$\boldsymbol{\mathcal{F}}$ on $R\times\PP$. (In the 
case $e=-1$ this a well-known property the moduli spaces of 
rank 2 sheaves on $\mathbb{P}^3$ with odd determinant - see 
for instance \cite[Thm. 4.6.5]{HL}. In the case $e=0$ this 
follows from the explicit constructions of reflexive sheaves 
from $R$ as extensions of standard sheaves. These 
constructions are provided in 
\cite{Chang2,Maria-Miro,OS1985,S2018}.) \\
(III) The dimension of each $R$ is given by:
\begin{equation}\label{dim R -1}
\begin{split}
& \dim\mathcal{R}(-1,n,n^2)=n^2+3n+1\ \ \ {\rm if}\ \ \ 
n\ge2,\ \ \ {\rm resp.},\ \ \  3\ \ \  {\rm if}\ \ \ n=1,\\
& \dim\mathcal{R}(-1,n,n^2-2rn+2r(r+1))=
n^2+(3-2r)n+2r^2+5,\ \ \ if\ \ \ r\ge2,\ \ n\ge5,\\
& \dim\mathcal{R}(-1,n,n^2-2rn+2r(r+1))=n^2+n+6,\ \ \  {\rm 
if}\ \ \ r=1,\ \ n\ge5,\\
\end{split}
\end{equation}
\begin{equation}\label{dim R 0}
\begin{split}
& \dim\mathcal{R}(0,n,n^2-n+2)=n^2+2n+5,\ \ \ n\ge4,\ \ 
\ {\rm resp.},\ \ \  21\ \ \  {\rm if}\ \ \ n=3, \\
& \dim\mathcal{R}(0,n,n^2-n)=n^2+2n+5,\ \ \ n\ge4,\\
& \dim\mathcal{R}(0,n,n^2-3n+8)=n^2+11\ \ \  {\rm if}\ \ \ 
n\ge6,\ \ \ {\rm resp.},\ \ \  37\ \ \  {\rm if}\ \ \ n=5.
\end{split}
\end{equation}
(IV) For $e=-1$ and arbitrary integer $n\ge1$, the maximal
possible $m$ such that $\calm(-1,n,m)\ne\emptyset$ equals 
$n^2$; note that $(-1,n,n^2)\in\Sigma_{-1}$ for $n\ge1$.
For $e=0$ and arbitrary integer $n\ge1$, the maximal possible 
$m$ such that $\calm(-1,n,m)\ne\emptyset$ equals $n^2-n+2$; 
note that $(-1,n,n^2-n+2)\in\Sigma_0$.\\
(V) The dimensions of the components $\mathcal{R}(e,n,m)$
satisfy the relations:
\begin{equation}\label{dim=dim Ext}
\dim\mathcal{R}(e,n,m)=\dim \ext^1(F,F),\ \ \ \ \ \ 
[F]\in\mathcal{R}(e,n,m),\ \ \ \ \ \ (e,n,m)\in
\Sigma_{-1}\cup\Sigma_0.
\end{equation}

Now take an arbitrary scheme $R=\mathcal{R}(e,n,m)$ for 
$(e,n,m)\in\Sigma$ and, similarly to \eqref{R*times P3s}, set
\begin{equation}\label{R times P3s}
\left(R\times(\PP)_0^s\right)_0:= \{([F],S)\in 
R\times(\PP)_0^s ~|~S\in{\Pi}_{[F]}\}, 
\end{equation}
\begin{equation}\label{R times G times P3s}
\left(R\times G(2,4)\times(\PP)_0^s\right)_0:= 
\{( [F],(l,S))\in R\times G(2,4)\times(\PP)_0^s 
~|~(l,S)\in\mathcal{X}_{[F]}\},
\end{equation}
where ${\Pi}_{[F]}$ is defined in \eqref{PiF} for any 
reflexive sheaf $[F]\in R$. In particular, for $s=1$ we have
\begin{equation}\label{minus Sing}
(R\times\PP)_0=\{([F],x)\in R\times\PP\ |\ x\not\in
\sing(F)\}.
\end{equation}
By property (III) above there is a universal sheaf 
$\boldsymbol{\mathcal{F}}$ on $R\times\PP$, and the definition
\eqref{minus Sing} yields that
$$
\mathbf{F}:=\boldsymbol{\mathcal{F}}|_{(R\times\PP)_0}
$$
is a locally free rank 2 sheaf. Hence, there exists an open
subset $U$ of $(R\times\PP)_0$ such that 
\begin{equation}
\mathbf{P}(\mathbf{F}|_U)\simeq U\times\mathbb{P}^1,
\end{equation}
and we have dense open inclusions
\begin{equation}\label{two embeddings}
\mathbf{P}(\mathbf{F})\overset{\mathrm{open}}{\hookleftarrow}
U\times\mathbb{P}^1\overset{\mathrm{open}}{\hookrightarrow}
R\times\PP\times\mathbb{P}^1.
\end{equation}
Now introduce a piece of notation. Let $s$ be a positive 
integer and let $f:X\to Y$ be an arbitrary morphism of 
schemes. The symmetric group $G=\mathcal{S}_s$ acts on 
$W=\prod_{1}^{s}X$ by permutations of factors, and the 
$s$-fold fibered product $X\times_Y\cdots\times_YX$ naturally 
embeds in $W$ as a $G$-invariant subscheme. We will denote by 
$\mathrm{Sym}^s(X/Y)$ the quotient scheme $(X\times_Y\cdots
\times_YX)/G$ and call this quotient the fibered symmetric 
product of $X$ over $Y$.

Fix an integer $s\ge1$. The composition of projections
$$
f:\ \mathbf{P}(\mathbf{F})\xrightarrow{\pi}(R\times\PP)_0
\hookrightarrow R\times\PP\xrightarrow{pr_1}R,
$$
where $\pi$ is the structure morphism, defines the
fibered symmetric product $\mathrm{Sym}^s(\mathbf{P}
(\mathbf{F})/R)$ together with the projection
$f_s:\mathrm{Sym}^s(\mathbf{P}(\mathbf{F})/R)\to R$ which 
factorizes as
$$
f_s:\mathrm{Sym}^s(\mathbf{P}(\mathbf{F})/R)\xrightarrow
{\pi_s}R\times\mathrm{Sym}^s(\PP)\xrightarrow{pr_1}R.
$$
The open embedding $(\PP)_0^s\hookrightarrow
\mathrm{Sym}^s(\PP)$ together with the above projection 
$\pi_s$ defines an open dense embedding of the fibered product
\begin{equation}\label{Y R}
Y_R:=\mathrm{Sym}^s(\mathbf{P}(\mathbf{F})/R)\times
_{\mathrm{Sym}^s(\PP)}(\PP)_0^s\overset{\mathrm{open}}
{\hookrightarrow}\mathrm{Sym}^s(\mathbf{P}(\mathbf{F})/R).
\end{equation}
By the definition of $Y_R$, its set-theoretic description
is the same as that of $\tilde{\calt}(e,n,m,s)$, with 
$\calr^*(e,n,m)$ substituted by $R$:
\begin{equation}\label{Y_R as a set}
Y_R:=\{y=([F],S,[\varphi_y])~|~([F],S)\in\left(R\times(\PP)
_0^s\right)_0, \varphi_y\in\Hom(F,\calo_S)_e\},
\end{equation}
where $\left(R\times(\PP)_0^s\right)_0$ is defined in 
\eqref{R times P3s}.

Now define a new set $X_R$ by the formula similar to 
\eqref{familyX}, with $\calr^*(e,n,m)$ substituted by $R$: 
\begin{equation}\label{X_R as a set}
\begin{split}
& X_R:=\{x=([F],(l,S),[\varphi_x])~|~([F],(l,S))\in
\left(R\times G(2,4)\times(\PP)_0^s\right)_0,\\
&[\varphi_x]\in\Hom(F,Q_x)_e/\mathrm{Aut}(Q_x),\ Q_x:=
Q_{(l,S),r}\},
\end{split}
\end{equation}
where $\left(R\times G(2,4)\times(\PP)_0^s\right)_0$ and 
$Q_{(l,S),r}$ are defined in \eqref{R times G times P3s} and
\eqref{QlSr}, respectively. There is a well-defined projection
\begin{equation}\label{rho=}
\rho:\ X_R\to Y_R\times G(2,4),\ \ \ ([F],(l,S),[\varphi_x])
\mapsto\big(([F],S,[\varphi_x|_{\calo_S}]),l\big) ~~,
\end{equation}
such that 
$$
\mathcal{V}=\rho(X_R)
$$
is a dense open subset of $Y_R\times G(2,4)$, and
\begin{equation}\label{rho-1}
\rho^{-1}(y,l)=\mathbf{P}^{2r+1-e}_l,\ \ \ \ (y,l)\in
\mathcal{V},
\end{equation}
where $\mathbf{P}^{2r+1-e}_l$ is described in \eqref{Pl} and
\eqref{QlSr}. Let $\Gamma\subset G(2,4)\times\PP$ be the 
graph of incidence with projections $G(2,4)\leftarrow\Gamma
\to\PP$. Consider the natural projections
$$
\mathcal{V}\xleftarrow{v}\mathcal{V}\times_G\Gamma\times_{\PP}
(R\times\PP)_0\xrightarrow{g}(R\times\PP)_0\xrightarrow{h}\PP
$$
and a locally free sheaf $\mathbf{E}$ defined as
$$
\mathbf{E}=(v_*(g^*\mathbf{F}\otimes h^*\op3(r))),\ \ \ \ \ 
\rk\mathbf{E}=2r+2-e.
$$
Then 
\begin{equation}\label{rho again}
\rho:\ X_R=\mathbf{P}(\mathbf{E})\to\mathcal{V}
\hookrightarrow Y_R\times G(2,4),
\end{equation}
is a locally trivial $\mathbf{P}^{2r+1-e}$-fibration with 
fibre  $\mathbf{P}^{2r+1-e}_l$ described in \eqref{rho-1}.

From the definition \eqref{R times P3s} of $\left(R\times(\PP)
_0^s\right)_0$ it follows that $\pi_s(Y_R)\subset\left
(R\times(\PP)_0^s\right)_0$. We thus consider the composition
\begin{equation}\label{prn pi s}
f_Y:Y_R\xrightarrow{\pi_s}(R\times(\PP)_0^s)_0\hookrightarrow
R\times(\PP)_0^s\xrightarrow{pr_1}R.
\end{equation}
Note that the open embeddings in \eqref{two embeddings} 
commute with the natural projections 
$f:\mathbf{P}(\mathbf{F})\to R$,\ $f':U\times\mathbb{P}^1\to 
R$ and $f'':R\times\PP\times\mathbb{P}^1\to R$ and therefore 
define the induced dense open embeddings
\begin{equation}\label{two induced emb}
\mathrm{Sym}^s(\mathbf{P}(\mathbf{F})/R)\overset{\mathrm{open}
}{\hookleftarrow}\mathrm{Sym}^s(U\times\mathbb{P}^1/R)
\overset{\mathrm{open}}{\hookrightarrow}\mathrm{Sym}^s
(R\times\PP\times\mathbb{P}^1/R)
\end{equation}
which commute with the induced projections
$f_s:\mathrm{Sym}^s(\mathbf{P}(\mathbf{F})/R)\to R$,\ 
$f'_s:\mathrm{Sym}^s(U\times\mathbb{P}^1/R)\to R$ and
$f''_s:\mathrm{Sym}^s(R\times\PP\times\mathbb{P}^1/R)\to R$.
The diagram \eqref{two induced emb} yields a birational 
isomorphism 
$\mathrm{Sym}^s(\mathbf{P}(\mathbf{F})/R)
\overset{\mathrm{bir}}{\dashleftarrow\dashrightarrow}
\mathrm{Sym}^s(R\times\PP\times\mathbb{P}^1/R)$, hence the 
dense open embedding \eqref{Y R} leads to a birational 
isomorphism
\begin{equation}\label{bir isom}
Y_R\overset{\mathrm{bir}}{\dashleftarrow\dashrightarrow}
\mathrm{Sym}^s(R\times\PP\times\mathbb{P}^1/R).
\end{equation}
On the other hand, from the definition of
$\mathrm{Sym}^s(R\times\PP\times\mathbb{P}^1/R)$ follows an
isomorphism
\begin{equation}\label{isom 1}
\mathrm{Sym}^s(R\times\PP\times\mathbb{P}^1/R)\simeq
R\times\mathrm{Sym}^s(\PP\times\mathbb{P}^1/R).
\end{equation}
Since any symmetric product of a rational variety is also 
rational (see for instance \cite[Ch. 4, Thm. 2.8]{GKZ}), it 
follows from \eqref{isom 1} and the property (I) that 
$\mathrm{Sym}^s(R\times\PP\times \mathbb{P}^1/R)$ is a 
rational irreducible scheme of dimension $4s+\dim R$. Hence 
by \eqref{bir isom}
\begin{equation}\label{rational scheme}
Y_R\ \ \textrm{is a rational irreducible scheme of 
dimension}\ \ 4s+\dim R.
\end{equation}
This together with the the description \eqref{rho again}
yields:
\begin{equation}\label{rational scheme2}
X_R\ \ \textrm{is a rational irreducible scheme of 
dimension}\ \ 4s+2r+5-e+\dim R.
\end{equation}

\begin{Teo}\label{Tmain2} 
For any $(e,n,m)\in\Sigma_{-1}\cup\Sigma_0$ and $R$ irreducible component of $\calr(e,n,m)$, we have: 
\begin{itemize}
\item[(i)] for any integer $s$ such that $0\le 2s\le m$ there exists a 
rational, generically reduced, irreducible component 
$\overline{Y}_R$ of the moduli space $\calm(e,n,m-2s)$ having 
the dimension $4s+\dim\calr(e,n,m)$, where $\dim\calr(e,n,m)$ 
is given by one of the corresponding formulas \eqref{dim R -1}-\eqref{dim R 0}. A generic sheaf from $\overline{Y}_R$ has 0-dimensional singularities;

\item[(ii)] for any integers $r,s$ such that $s\ge0$, $r\ge3$ and $2r+2s\le m+2+e$ there exists a rational, generically reduced, irreducible and  component $\overline{X}_R$ of the moduli space 
$\calm(e,n+1,m+2+e-2r-2s)$ having the dimension $4s+2r+5-e+
\dim\calr(e,n,m)$, with $\dim\calr(e,n,m)$ given by the 
formulas mentioned above. A generic sheaf from 
$\overline{X}_R$ has singularities of pure dimension 1 for
$s=0$, and, respectively, of mixed dimension for $s\ge1$.
\end{itemize}
\end{Teo}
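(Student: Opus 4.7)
The plan is to use the constructions of $Y_R$ and $X_R$ from the preceding paragraphs and to promote them via the modular morphisms to irreducible components $\overline{Y}_R \subset \calm(e,n,m-2s)$ and $\overline{X}_R \subset \calm(e,n+1,m+2+e-2r-2s)$, respectively. The rationality and dimensions of $Y_R$ and $X_R$ are already recorded in \eqref{rational scheme} and \eqref{rational scheme2}. The modular maps are embeddings, by the same argument given after \eqref{def calX}: the datum $([F],S,[\varphi])$, resp.\ $([F],(l,S),[\varphi])$, is reconstructed from a sheaf $E$ via $F=E^{\vee\vee}$, $S=\supp(E^{\vee\vee}/E)$ (resp.\ $l\sqcup S=\supp(E^{\vee\vee}/E)$), and $\varphi$ the canonical projection. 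Thus, after identifying $Y_R$ and $X_R$ with their images, what remains is to verify (1) their closures are irreducible components of the claimed dimension, (2) they are generically reduced, and (3) the generic sheaves have the stated singularity structure.

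Assertion (3) is immediate from the construction, since the generic $E\in Y_R$ has $E^{\vee\vee}/E=\calo_S$ (0-dimensional), while the generic $E\in X_R$ has $E^{\vee\vee}/E=\calo_S\oplus i_*\calo_l(r)$, yielding purely 1-dimensional singularities for $s=0$ and mixed for $s\ge 1$. For (1) and (2) simultaneously, I would prove
$$
\dim\ext^1(E,E)=\dim Y_R\quad\text{(resp.\ }\dim X_R\text{)}
$$
for a generic $E$ in the image; since $\ext^1(E,E)$ is the Zariski tangent space to the moduli at $[E]$, this equality automatically promotes the image to a reduced irreducible component of the claimed dimension. By Theorem \ref{dimext1} the task reduces to computing $\dim\ext^2(E,E)$ and comparing with $\dim\ext^2(F,F)$ via property (V), which gives $\dim R=\dim\ext^1(F,F)=(8n-3+2e)+\dim\ext^2(F,F)$.

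The main obstacle is that, for components $R\in\Sigma$, a generic $[F]\in R$ need not satisfy $\ext^2(F,F)=0$, so that Proposition \ref{important}(iii) and consequently Theorems \ref{0dcomp} and \ref{NewComponentsmixed} do not apply out of the box. I would circumvent this by applying the functors $\Hom(-,E)$ and $\Hom(F,-)$ directly to $0\to E\to F\to Q\to 0$, with $Q=\calo_S$ in part (i) and $Q=\calo_S\oplus i_*\calo_l(r)$ in part (ii), and tracking each term. Using $\supp(Q)\cap\sing(F)=\emptyset$ (so that $F$ is locally free near $\supp Q$), the vanishings $\ext^i(F,\calo_S)=0$ for $i>0$, and $H^1(\lhom(F,i_*\calo_l(r)))=0$ for $r\ge 1$ (a consequence of the Grauert--M\"ulich splitting $F|_l\simeq\calo_l(e)\oplus\calo_l$), together with the local computations $h^0(\lext^3(\calo_S,E))=4s$ and $\dim\ext^3(i_*\calo_l(r),E)=2r-3-e$ for $r\ge 1$ carried out in the proofs of Theorems \ref{0dcomp} and \ref{NewComponentsmixed}, a short diagram chase yields
$$
\dim\ext^2(E,E)=\dim\ext^2(F,F)+4s+\delta,
$$
with $\delta=0$ in part (i) and $\delta=2r-3-e$ in part (ii). Combining with Theorem \ref{dimext1} and property (V) produces $\dim\ext^1(E,E)=\dim R+4s$ in case (i) and $\dim\ext^1(E,E)=\dim R+4s+2r+5-e$ in case (ii), matching $\dim Y_R$ and $\dim X_R$; rationality of the resulting components is then inherited from that of $Y_R$ and $X_R$.
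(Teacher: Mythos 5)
Your proposal follows essentially the same route as the paper: embed $Y_R$ and $X_R$ into the moduli spaces via the modular morphisms, carry over rationality and dimension from \eqref{rational scheme} and \eqref{rational scheme2}, and match $\dim\ext^1(E,E)$ with these dimensions by relating $\dim\ext^2(E,E)$ to $\dim\ext^2(F,F)$ through the defining triple $0\to E\to F\to Q\to 0$, which is exactly the paper's argument. One small caveat: in case (ii) the diagram chase only yields the inequality $\dim\ext^2(E,E)\le\dim\ext^2(F,F)+4s+2r-3-e$ rather than the equality you assert (the comparison map $\h^2(\lhom(F,E))\to\h^2(\lhom(E,E))$ is merely surjective because $\lext^1(Q_x,E)$ is $1$-dimensional, and the term $\h^1(\lext^1(E,E))=2r-e-6$ enters only as an upper bound on $\ker\varepsilon$), but this suffices since the reverse inequality $\dim\ext^1(E,E)\ge\dim X_R$ comes for free from the fact that the modular morphism is an embedding, which is precisely how the paper closes the argument.
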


\begin{proof}


\noindent{\bf Item (i).}  We are going to show that $Y_R$ is an open dense subset 
of an irreducible component $\overline{Y}_R$ of 
$\calm(e,n,m-2s)$, where $R=\calr(e,n,m)$. 
To this aim, we first construct a family of sheaves on 
$\PP$ with base $Y_R$ which are obtained from reflexive 
sheaves $[F]\in R$ via elementary transformations along sets 
of $s$ points. Let $H={\rm Hilb}^s(\PP)$ be the Hilbert 
scheme of 0-dimensional subschemes of length $s$ in $\PP$,
together with the universal family of 0-dimensional schemes 
$Z_H\hookrightarrow H\times\PP$. We have an open embedding 
$(\PP)^s_0\hookrightarrow H$ and the induced family
$Z=Z_H\times_H(\PP)^s_0\hookrightarrow(\PP)^s_0\times\PP$.
Given a point $\{S\}\in(\PP)^s_0$, we will denote also by $S$ 
the corresponding 0-dimensional subscheme $Z\times_{H(\PP)^s
_0}\{S\}$ in $\PP$. (This will not cause an ambiguity since 
$S$ is a reduced scheme by the definition of $(\PP)^s_0$.) 
According to \cite[Ch. II, Prop. 7.12]{H}, for a given sheaf 
$[F]\in R$ and the above point $\{S\}$ such that $S\cap\sing
(F)=\emptyset$, choosing a class $[\varphi]$ modulo 
$\mathrm{Aut}(\mathcal{O}_S)$ of an epimorphism
$$
\varphi:F\twoheadrightarrow\mathcal{O}_S
$$
is equivalent to choosing a section $[\varphi]$ of the 
structure morphism $\pi:\mathbf{P}(F|_S)\to S$,
$$
[\varphi]:S\hookrightarrow\mathbf{P}(F|_S).
$$
By the construction of $Y_R$ (see \eqref{Y R}), the section 
$[\varphi]$ is just a point of $Y_R$ lying in the fiber 
$\pi_s^{-1}([F],\xi)$ of the projection $\pi_s:Y_R\to
(R\times(\PP)_0^s)_0$ defined in \eqref{prn pi s}. Using this
description of points of $Y_R$, define a family 
\begin{equation}\label{family for YR}
\{[E_y]\in\calm(e,n,m-2s)\ |\ E_y=\ker(\varphi_y:F
\twoheadrightarrow\mathcal{O}_S),\ y=([F],S,[\varphi_y])\in 
Y_R\}.
\end{equation}
Here, by definition, for each $y=([F],S,[\varphi_y])\in Y_R$, 
the sheaf $E=E_y$ satisfies the exact triple
\begin{equation}\label{triple for E}
0\to E\to F\xrightarrow{\varphi_y}\mathcal{O}_S\to0,
\ \ \ \ \ F=E^{\vee\vee}.
\end{equation} 
In particular, this triple, together with the stability of 
$F$, yields by usual argument the stability of $E$, i. e., the
definition of the family \eqref{family for YR} is consistent. 
The family $\{E_y\}_{y\in Y_R}$ globalizes in a standard way 
to a sheaf $\mathbf{E}$ on $Y_R\times\PP$ such that, for any 
$y\in Y_R$, $\mathbf{E}|_{\{y\}\times\PP}\simeq E_y$. We thus 
have a natural modular morphism
\begin{equation}\label{modular morphism for YR}
\Psi:Y_R\to\calm(e,n,m-2s),\ y\mapsto\left[\mathbf{E}|_{\{y\}\times\PP}\right].
\end{equation}
The morphism $\Psi$ is clearly an embedding, since a point 
$\{y\}$ is recovered from $E=\ker(\varphi_y)$ as the (class 
of the) quotient map $F=E^{\vee\vee}\twoheadrightarrow
E^{\vee\vee}/E$. We therefore identify $Y_R$ with its image
in $\calm(e,n,m-2s)$. Let $\overline{Y}_R$ be the closure
of $Y_R$ in $\calm(e,n,m-2s)$.

We have to show that $\overline{Y}_R$ is an irreducible 
rational component of $\calm(e,n,m-2s)$, where $R=\calr(e,n,
m)$. Here the rationality and the dimension of $\overline{Y}
_R$ are given in display \eqref{rational scheme}. Since 
$\overline{Y}_R$ is irreducible, to prove that 
$\overline{Y}_R$ is an irreducible and generically reduced
component of $\calm(e,n,m-2s)$, it is enough to show that, 
for an arbitrary point $y\in Y_R$ the sheaf 
$E=E_y$ satisfies the equality
\begin{equation}\label{dim ext=dim Y}
\dim\ext^1(E,E)=\dim Y_R=4s+\dim\calr(e,n,m).
\end{equation}
(Note that the equality \eqref{dim ext=dim Y} is beyond the scope of Theorem \ref{0dcomp}, since we cannot assume that 
$\ext^2(F,F)=0$ here). 

Indeed, let $E$ satisfy the triple \eqref{triple for E}. Then,
since $S$ is 0-dimensional and $F$ is reflexive, it follows 
that $\dim\sing(E)=\dim\sing(F)=0$, and therefore $\dim\lext^1
(E,E)=\dim\lext^1(F,E)=0.$ Thus, 
\begin{equation}\label{vanish H1}
\h^1(\lext^1(E,E))=0,\ \ \ \ \h^1(\lext^1(F,E))=0,
\end{equation}
\begin{equation}\label{vanish H2}
\h^2(\lext^1(E,E))=0.
\end{equation}
The first equality in \eqref{vanish H1} and Lemma 
\ref{Extisomixed}.(ii) yield the equality
$\ext^2(E,E)=\h^0(\lext^2(E,E))\oplus\coker d^{01}_2$
where $d^{01}_2$ is the differential $d^{01}_2:\h^0(\lext^1
(E,E))\to\h^2(\lhom(E,E))$ in the spectral sequence of 
local-to-global Ext’s. Moreover, this spectral sequence 
together with \eqref{vanish H2} yields an exact sequence
\begin{equation}\label{exact 1}
\h^0(\lext^1(E,E))\xrightarrow{d_2^{01}}\h^2(\lhom(E,E))\to
\ext^2(E,E)\to\h^0(\lext^2(E,E))\to0.
\end{equation}
Note that, since the reflexive sheaf $F$ has homological 
dimension 1, it follows that 
\begin{equation}\label{Ext2,3=0}
\lext^2(F,E)=0=\lext^3(F,E).
\end{equation}
Therefore, applying to the triple \eqref{triple for E} the
functor $\lext^2(-,E)$ we obtain
\begin{equation}
\lext^2(E,E)=\lext^3(\mathcal{O}_S,E),
\end{equation}
Here, as in \eqref{h0=}, one has $h^0(\lext^3(\mathcal{O}_S,E)
)=4s$, so that
\begin{equation}\label{h0(ext2)}
h^0(\lext^2(E,E))=4s.
\end{equation}
Next, the equality $\lext^2(F,E)=0$ (see \eqref{Ext2,3=0})
together with the second equality in \eqref{vanish H1}
yields an exact sequence similar to \eqref{exact 1}:
\begin{equation}\label{exact 2}
\h^0(\lext^1(F,E))\xrightarrow{d_2^{01}}\h^2(\lhom(F,E))\to
\ext^2(F,E)\to0.
\end{equation}
The exact sequences \eqref{exact 1} and \eqref{exact 2} fit
in a commutative diagram extending \eqref{d012-diagram}
\begin{equation} \label{d012-diagram extended}
\xymatrix{ \h^0(\lext^1(F,E)) \ar[d]\ar[r]^{d^{01}_2} & 
\h^2(\lhom(F,E)) \ar[d]^-{\simeq}\ar[r] & \ext^2(F,E) \ar[r] 
\ar[d] &  0 \ar[d] & \\
\h^0(\lext^1(E,E)) \ar[r]^{d^{01}_2} & \h^2(\lhom(E,E))\ar[r]
& \ext^2(E,E) \ar[r] & \h^0(\lext^2(E,E)) \ar[r] & 0.}
\end{equation}
Here the second vertical map is an isomorphism. Indeed, 
applying to the exact triple \eqref{triple for E} the functor
$\lhom(-,E)$ we obtain an exact sequence $0\to\lhom(F,E)\to
\lhom(E,E)\to\mathcal{A}\to0$, where $\dim\mathcal{A}\le0$ 
since $\mathcal{A}$ is a subsheaf of the sheaf $\lext^1(
\calo_S,E)$ of dimension $\le0$. Now passing to 
cohomology of the last exact triple we obtain the desired
isomorphism. The diagram \eqref{d012-diagram extended} 
together with \eqref{h0(ext2)} yields the relation
\begin{equation}\label{dim=dim+4s}
\dim\ext^2(E,E)=\dim\ext^2(F,E)+4s.
\end{equation}
Now applying to \eqref{triple for E} the functor 
$\Hom(F,-)$ we obtain the exact sequence
\begin{equation}\label{Ext2(F,E)}
\ext^1(F,\calo_S)\to\ext^2(F,E)\to\ext^2(F,F)\to\ext^2
(F,\calo_S).
\end{equation}
Since $\supp(\calo_S)\cap\sing(F)=\emptyset$ and $F$ is 
reflexive, it is easy to show that 
\begin{equation}\label{extj(F,OS)=0}
\ext^j(F,\calo_S)=0,\ \ \ \ \ j>0,
\end{equation} 
- see, e. g., \cite[Proof of Prop. 6]{JMT} for the case
$e=0$; in case $e=-1$ this argument goes on without changing.
Thus, from \eqref{Ext2(F,E)} it follows that 
$\dim\ext^2(F,E)=\dim\ext^2(F,F)$, and the relation 
\eqref{dim=dim+4s} yields 
\begin{equation}\label{dim+4s}
\dim\ext^2(E,E)=\dim\ext^2(F,F)+4s.
\end{equation}
Next, since in \eqref{triple for E} $\dim\calo_S=0$, it 
follows that $c_i(E)=c_i(F),\ i=1,2$. Thus, since both $E$ 
and $F$ are stable, Theorem \ref{dimext1} implies that
$\dim\ext^1(E,E)-\dim\ext^2(E,E)=\dim\ext^1(F,F)-
\dim\ext^2(F,F)$. Hence, by \eqref{dim+4s}
$\dim\ext^1(E,E)=\dim\ext^1(F,F)+4s$. This together with
\eqref{dim=dim Ext} implies \eqref{dim ext=dim Y}.

\vspace{2mm}
\noindent{\bf Item (ii).}
We have to show that $X_R$, where $R=\calr(e,n,m)$, is 
an open dense subset of an irreducible component $\overline{X}
_R$ of $\calm(e,n+1,m+2+e-2r-2s)$. 
Using the pointwise description \eqref{X_R as a set} of 
$X_R$, we consider similarly to \eqref{family for YR} a family
\begin{equation}\label{family for XR}
\{[E_x]\in\calm(e,n+1,m+2+e-2r-2s)\ |\ E_x=\ker(\varphi:F
\twoheadrightarrow Q_x),\ x=([F],(l,S),[\varphi_x])\in X_R\}.
\end{equation}
The rest of the argument below is parallel to that in the 
proof of statement (i) above. The difference is due to the 
fact that the triple \eqref{triple for E} is modified as
\begin{equation}\label{triple for E new}
0\to E\to F\xrightarrow{\varphi_x}Q_x\to0,\ \ \ \ \ 
Q_x=\mathcal{O}_S\oplus i_*\calo_l(r),\ \ \ \ \ F=E^{\vee
\vee},\ \ \ \ \ E=E_x.
\end{equation} 
As for the sheaf $E$ in \eqref{triple for E}, a standard 
argument for the sheaf $E$ in the last triple in view of the
stability of $F$ yields the stability of $E$, i. e., the
definition of the family \eqref{family for XR} is consistent.
This family $\{E_x\}_{x\in X_R}$ globalizes in a standard way 
to a sheaf $\mathbf{E}$ on $X_R\times\PP$ such that, for any 
$x\in X_R$, $\mathbf{E}|_{\{x\}\times\PP}\simeq E_x$. We thus 
have a natural modular morphism similar to \eqref{modular 
morphism for YR}: 
\begin{equation}\label{modular morphism for XR}
\Psi:X_R\to\calm(e,n+1,m+2+e-2r-2s),\ x\mapsto\left[\mathbf{E}|_{\{x\}\times\PP}\right].
\end{equation} 
The morphism $\Psi$ is clearly an embedding, since a point 
$\{x\}$ is recovered from $E=\ker(\varphi_x)$ as the (class 
of the) quotient map $F=E^{\vee\vee}\twoheadrightarrow
E^{\vee\vee}/E$. We therefore identify $X_R$ with its image
in $\calm(e,n+1,m+2+e-2r-2s)$. Let $\overline{X}_R$ be the 
closure of $X_R$ in $\calm(e,n,m-2s)$.

We have to prove that, for $R=\calr(e,n,m)$, the scheme 
$\overline{Y}_R$ is an irreducible rational component of 
$\calm(e,n+1,m+2+e-2r-2s)$. Here the rationality and the 
dimension of $\overline{X}_R$ are given in display 
\eqref{rational scheme2}. Since $\overline{X}_R$ is 
irreducible, to prove that $\overline{X}_R$ is an irreducible 
and generically reduced component of $\calm(e,n+1,m+2+e-2r
-2s)$, it is enough to show that, for an arbitrary point 
$x\in X_R$ the sheaf $E=E_x$ satisfies the equality
\begin{equation}\label{dim ext=dim X}
\dim\ext^1(E,E)=\dim\overline{X}_R=4s+2r+5-e+\dim\calr(e,n,m).
\end{equation}
(Remark that the equality \eqref{dim ext=dim X} is beyond the 
scope of Theorem \ref{0dcomp}, since we cannot assume that 
$\ext^2(F,F)=0$ here). 

Indeed, let $E$ satisfy the triple \eqref{triple for E new}. 
This triple and the definition \eqref{X_R as a set} of $X_R$  
yield that 
\begin{equation}\label{emptyset0}
\mathrm{Supp}(Q_x)=S\sqcup l,\ \ \ \ \ \sing(E)=\mathrm{Supp}
(Q_x)\sqcup\sing(F),\ \ \ \ \ \mathrm{i.\ e.}\ \ \ \ \ 
\mathrm{Supp}(Q_x)\cap\sing(F)=\emptyset.
\end{equation}
Hence, since $F$ is reflexive, 
\begin{equation}\label{lext1(F,E)}
\dim\lext^1(F,E)=0,\ \ \ \ \mathrm{Supp}(\lext^1(F,E))=
\sing(F),
\end{equation}
\begin{equation}\label{Exti(F,E)restr=0}
\lext^i(F,E)|_{\PP\smallsetminus\sing(F)}=0,\ \ \ i\ge1.
\end{equation}
Since $F$ is locally free along $l$ (see \eqref{emptyset0}) 
the isomorphisms $F|_l\simeq\calo_l(e)\oplus\calo_l$ and 
$\lext^2(i_*\calo_l,\op3)\simeq\calo_l(2)$ imply that
\begin{equation}\label{lext2(Ol(r),F)}
\lext^2(i_*\calo_l(r),F)\simeq\calo_l(2-r+e)\oplus\calo_
l(2-r),
\end{equation}
\begin{equation}\label{lext1,3=0}
\lext^1(i_*\calo_l(r),F)=\lext^3(i_*\calo_l(r),F)=0.
\end{equation}
By the same reason, $\lext^j(F,i_*\calo_l(r))=0,\ j>0,$ so 
that, since $r\ge3$, we have 
\begin{equation}\label{Extj(F,Ol(r))=0}
Ext^j(F,i_*\calo_l(r))=H^j(\Hom(F,i_*\calo_l(r)))\simeq
H^j(\calo_l(r-e)\oplus\calo_l(r))=0,\ \ \ j>0.
\end{equation}
Applying to the triple \eqref{triple for E new} the functor 
$\lhom(i_*\calo_l(r),-)$ and using \eqref{lext2(Ol(r),F)}, 
\eqref{lext1,3=0} and the isomorphisms 
$$ \lext^1(i_*\calo_l(r),
i_*\calo_l(r))=N_{l/\PP}\simeq2\calo_l(1) ~~{\rm and}~~ 
\lext^2(i_*\calo_l(r),i_*\calo_l(r))\simeq\det N_{l/\PP}
\simeq\calo_l(2), $$
we obtain an exact sequence
\begin{equation}\label{exact on l}
0\to2\calo_l(1)\to\lext^2(i_*\calo_l(r),E)\to\calo_l(2-r+e)
\oplus\calo_l(2-r)\to\calo_l(2)\xrightarrow{\gamma}
\lext^3(i_*\calo_l(r),E)\to0.
\end{equation}
Here one easily sees that $\mathrm{Supp}(\lext^3(i_*\calo_l
(r),E))=l$, hence $\gamma$ is an isomorphism
\begin{equation}\label{=Ol(2)}
\lext^3(i_*\calo_l(r),E)\simeq\calo_l(2),
\end{equation}
and \eqref{exact on l} yields an exact triple
$0\to2\calo_l(1)\to\lext^2(i_*\calo_l(r),E)\to\calo_l(2-r+e)
\oplus\calo_l(2-r)\to0$. Passing to cohomology of this triple
and using the condition $r\ge3$ we get
\begin{equation}\label{h1(...)=}
h^1(\lext^2(i_*\calo_l(r),E))=2r-e-6.
\end{equation}
Next, applying to the triple \eqref{triple for E new} the 
functor $\lhom(-,E)$ we obtain a long exact sequence
\begin{equation}\label{Hom(-,E)}
0\to\lhom(F,E)\to\lhom(F,E)\xrightarrow{\partial}\lext^1(Q_x,
E)\to...\to\lext^3(F,E)\to\lext^3(E,E)\to0. 
\end{equation}
Denoting $\mathcal{A}=\mathrm{im}(\partial)$ we obtain an 
exact triple
\begin{equation}\label{Hom(-,E)1}
0\to\lhom(F,E)\to\lhom(F,E)\to\mathcal{A}\to0, 
\end{equation}
Since $\mathcal{A}$ is a subsheaf of $\lext^1(Q_x,E)$ and
by \eqref{emptyset0} $\dim\lext^1(Q_x,E)\le1$, it follows 
that $\h^2(\mathcal{A})=0$, and the triple \eqref{Hom(-,E)1}
yields an epimorphism
\begin{equation}\label{h2 epi}
\h^2(\lhom(F,E))\twoheadrightarrow\h^2(\lhom(E,E)).
\end{equation}
Next, restricting the sequence \eqref{Hom(-,E)} onto
$\PP\smallsetminus\sing(F)$ and using \eqref{Exti(F,E)restr=0}
we obtain the isomorphism
\begin{equation}\label{lext(E,E)restr}
\lext^1(E,E)|_{\PP\smallsetminus\sing(F)}\simeq\lext^1
(i_*\calo_l(r),E).
\end{equation}
Since by \eqref{triple for E new} the sheaves $E$ and $F$
coincide outside $\mathrm{Supp}(Q_x)$, it follows from 
\eqref{lext(E,E)restr} and the reflexivity of $F$ that
\begin{equation}\label{lext(E,E)}
\lext^1(E,E)\simeq\lext^2(i_*\calo_l(r),E)\oplus
\lext^1(F,F),\ \ \ \ \ \ \dim\lext^1(F,F)=0.
\end{equation}
These equalities together with \eqref{h1(...)=} imply
\begin{equation}\label{h1(lext1)=}
h^1(\lext^1(E,E))=h^1(\lext^2(i_*\calo_l(r),E))=2r-e-6.
\end{equation} 
From \eqref{lext1(F,E)} and \eqref{lext(E,E)} we find
\begin{equation}\label{vanish H1 new}
\h^1(\lext^1(F,E))=\h^2(\lext^1(F,E))=0,
\end{equation}
\begin{equation}\label{vanish H2 new}
\h^2(\lext^1(E,E))=0.
\end{equation}
The spectral sequence of local-to-global Ext’s for the pair 
$(E,E)$ together with \eqref{vanish H2 new} yields the exact 
sequences
\begin{equation}\label{exact 1 new}
\h^0(\lext^1(E,E))\xrightarrow{d_2^{01}}\h^2(\lhom(E,E))\to
\coker d^{01}_2\to0,
\end{equation}
\begin{equation}\label{exact 11 new}
0\to\ker{\varepsilon}\to\ext^2(E,E)\xrightarrow{\varepsilon}
\h^0(\lext^2(E,E))\to0.
\end{equation}
\begin{equation}\label{exact 12 new}
0\to\coker d^{01}_2\to\ker{\varepsilon}\to\h^1(\lext^1(E,E))
\to0.
\end{equation}
Note that, since the sheaf $F$ is reflexive, the equalities
\eqref{Ext2,3=0} are still true, so that the rightmost part
of the long exact sequence \eqref{Hom(-,E)} yields the 
isomorphisms
\begin{equation}\label{lext2(E,E)}
\lext^2(E,E)\simeq\lext^3(Q_x,E)=\lext^3(\calo_S,E)\oplus
\lext^3(i_*\calo_l(r),E).
\end{equation}
Here, as in \eqref{h0=}, one has $h^0(\lext^3(\mathcal{O}_S,E)
=4s$, and by \eqref{=Ol(2)} we have $h^0(\lext^3(\calo_S,E))
=3$, so that \eqref{lext2(E,E)} implies
\begin{equation}\label{h0(ext2) new}
h^0(\lext^2(E,E))=4s+3.
\end{equation}
Besides, similar to \eqref{exact 1 new}-\eqref{exact 12 new}, 
the spectral sequence of local-to-global Ext’s for the pair 
$(F,E)$ together with \eqref{vanish H1 new} and the first 
equality \eqref{Ext2,3=0} yields the exact sequence
\begin{equation}\label{exact 2 new}
\h^0(\lext^1(F,E))\xrightarrow{d_2^{01}}\h^2(\lhom(F,E))\to
\ext^2(F,E)\to0,
\end{equation}
The exact sequences \eqref{exact 1 new} and \eqref{exact 2 
new} in view of \eqref{h2 epi} fit in a commutative diagram 
\begin{equation} \label{d012-diagram extended new}
\xymatrix{ \h^0(\lext^1(F,E)) \ar[d]\ar[r]^{d^{01}_2} & 
\h^2(\lhom(F,E)) \ar@{>>}[d]\ar[r] & \ext^2(F,E) \ar[r] 
\ar@{>>}[d] &  0 \ar[d] & \\
\h^0(\lext^1(E,E)) \ar[r]^{d^{01}_2} & \h^2(\lhom(E,E))\ar[r]
& \coker d^{01}_2 \ar[r] &\ 0. &}
\end{equation}
Now applying to \eqref{triple for E new} the functor 
$\Hom(F,-)$ we obtain the exact sequence
\begin{equation}\label{Ext2(F,E) new}
\ext^1(F,Q_x)\to\ext^2(F,E)\to\ext^2(F,F)\to\ext^2(F,Q_x).
\end{equation}
Recall that $Q_x=\calo_S\oplus i_*\calo_l(r)$, and the 
equalities \eqref{extj(F,OS)=0} are still true. This together
with \eqref{Extj(F,Ol(r))=0} yields $\ext^i(F,Q_x)=0,\ i=1,2,$
and \eqref{Ext2(F,E) new} implies the isomorphism. 
\begin{equation}\label{Ext2=...}
\ext^2(F,E)\simeq\ext^2(F,F).
\end{equation}
Now \eqref{exact 11 new}, \eqref{exact 12 new}, diagram
\eqref{d012-diagram extended new} and \eqref{Ext2=...} imply
the inequality
$$
\dim\ext^2(E,E)\le\dim\ext^2(F,F)+h^0(\lext^2(E,E))+
h^1(\lext^1(E,E))
$$
which in view of \eqref{h1(lext1)=} and \eqref{h0(ext2) new}
can be rewritten as
\begin{equation}\label{dim+...new}
\dim\ext^2(E,E)\le\dim\ext^2(F,F)+4s+2r-e-3.
\end{equation}
Next, since $c_1(Q_x)=0,\ c_2(Q_x)=-1$, it follows from 
\eqref{triple for E new} that $c_1(E)=c_1(F),\ i=1,2$. Thus, 
since both $E$ and $F$ are stable, Theorem \ref{dimext1} 
implies that $\dim\ext^1(E,E)-\dim\ext^2(E,E)=\dim\ext^1(F,F)-
\dim\ext^2(F,F)+8$. Hence, by \eqref{dim+...new}
$\dim\ext^1(E,E)\le\dim\ext^1(F,F)+4s+2r+5-e$. This 
inequality in view of \eqref{rational scheme2} and 
\eqref{dim=dim Ext} can be rewritten as
$$
\dim\ext^1(E,E)\le4s+2r+5-e+\dim R=\dim\overline{X}_R.
$$
On the other hand, since in \eqref{modular morphism for XR}
the modular morphism $\Psi:X_R\to\calm(e,n+1,m+2+e-2r-2s)$ is
an embedding, it follows that $\dim\ext^1(E,E)\ge\dim 
\overline{X}_R$. Thus, the last inequality is a strict 
equality, and we obtain \eqref{dim ext=dim X}.
\end{proof}

We are finally in position to give the proof of Main Theorem \ref{main2}. \label{pf mthm 2}

\vspace{2mm}
\noindent{\it Proof of Main Theorem \ref{main2}.} {\bf Item (i).}
It follows from Theorem \ref{Tmain2} (i) and the property (IV). Namely, take 
$R=\calr(-1,2n,(2n)^2)$. If $n\ge1$, then take  $s=2n^2$, so 
that, for this $s$, $Y_R$ is a rational generically reduced 
component of $\calm(-1,2n,0)$, with generic sheaf having 
0-dimensional singularities. If $n\ge3$, then for 
$(s,r)=(0,2(n^2-n-1))$, the scheme $X_R$ is a rational 
generically reduced component of $\calm(-1,2n,0)$, with 
generic sheaf having purely 1-dimensional singularities; 
b) for each pair $(s,r)$ such that $1\le s\le 2(n^2-n-1)$ and 
$r=2(n^2-n)+1-s$, the scheme $X_R$ is a rational 
generically reduced component of $\calm(-1,2n,0)$, with 
generic sheaf having singularities of mixed dimension.

\vspace{2mm}
\noindent\noindent{\bf Item (ii).}
It follows from Theorem \ref{Tmain2} (i) and the property (IV). Take $R=\calr
(0,n,n^2-n+2)$. If $n\ge1$, then for $s=\frac{n^2-n+2}{2}$, 
the scheme $Y_R$ is a rational generically reduced component 
of $\calm(-0,n,0)$, with generic sheaf having 0-dimensional 
singularities. If $n\ge3$, then for $(s,r)=(0,\frac{n(n-3)}{2}
+3)$, the scheme $X_R$ is a rational generically reduced 
component of $\calm(0,n,0)$, with generic sheaf having purely 
1-dimensional singularities. If $n\ge4$, then for each pair
$(s,r)$ such that $1\le s\le\frac{n(n-3)}{2}$ and 
$r=\frac{n(n-3)}{2}+3-s$, the scheme  $X_R$ is a rational 
generically reduced component of $\calm(0,n,0)$, with 
generic sheaf having singularities of mixed dimension.
Main Theorem \ref{main2} is proved.   ~\hfill$\Box$

\begin{Remark}
As it was shown by Le Potier in \cite{LeP1993}, the moduli 
scheme $\calm(0,2,0)$ consists of three irreducible 
components: one is the closure of locally free sheaves, while 
the other two have, as a general point, sheaves with 
0-dimensional singular locus obtained via elementary 
transformations of reflexive sheaves in $\calr(0,2,2)$ and 
$\calr(0,2,4)$ at points. While the results of this section 
do not cover these irreducible components, Le Potier has 
shown that they are also rational, via a different method. 
\end{Remark}

\section{Irreduciblility of $\calm(-1,2,4)$} 
\label{irreducible of M(-1,2,4)}

In the previous sections our results ensured the existence of 
irreducible components of the moduli spaces of torsion free 
sheaves with prescribed singularities, without focusing on 
the description of all irreducible components of the moduli 
space for given Chern classes. The aim of this 
and subsequent sections is to consider this problem for 
smallest value $c_2=2$ of the second Chern class. Namely, in 
Sections   
\ref{irreducible of M(-1,2,4)}-\ref{irreducible of M(-1,2,0)} 
we will obtain the complete characterization of the moduli 
spaces $\calm(-1,2,c_3)$ for all possible values of $c_3$. 
These results will illustrate why this study  becomes too 
complicated for large values of $c_2$.  

More precisely, in this section we will describe the 
irreducible components of the moduli spaces $\calm(-1,2,c_3)$ 
for possible values $c_3 = 0,2,4$ of the third Chern class. 
For the convenience of the reader, in the following 
proposition we will fix some numerical invariants of torsion 
free sheaves that we will use in this section.

\begin{Prop}\label{ChernClasses}
Let $E$ be a torsion free sheaf, $E^{\vee\vee}$ its double dual
and $Q_E : = E^{\vee \vee}/E$. The following holds:
\begin{itemize}
\item[(i)] $\dim Q_E\le1$ and $c_1(E^{\vee \vee}) = c_1(E)$;
\item[(ii)] if $\dim Q_E=1$ then $c_2(E^{\vee \vee}) = 
c_2(E)-\mult(Q_E)$, $c_3(E^{\vee\vee})=c_3(E)+c_3(Q_E)-c_1(E)\cdot\mult Q_E$,
where $\mult(Q_E)$ is the multiplicity of the sheaf $Q_E$;
\item[(iii)] if $\dim Q_E=0$ then $c_2(E^{\vee \vee})=c_2(E)$, 
$c_3(E^{\vee \vee})=c_3(E)+2 \cdot \mathrm{length}(Q_E)$.
\end{itemize}
If, in addition $E$ is stable with $c_1(E)=-1$, then
\begin{itemize}
\item[(iv)] $E^{\vee\vee}$ is stable;
\item[(v)] If $\dim Q_E=1$ then $c_2(E) \geq \mult Q_E\ge1$.
\end{itemize}
\end{Prop}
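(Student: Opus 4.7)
The overall plan is to exploit the fundamental exact sequence
\[
0 \to E \to E^{\vee\vee} \to Q_E \to 0
\]
together with the Whitney formula $c(E^{\vee\vee})=c(E)\cdot c(Q_E)$, reducing (i)--(iii) to computing $c_i(Q_E)$ according to $\dim Q_E$. Item (i) uses the standard fact that, on a smooth threefold, the natural map from a torsion-free sheaf to its double dual has cokernel supported in codimension $\ge 2$, hence $\dim Q_E\le 1$. Since $Q_E$ is then supported in codimension $\ge 2$, we have $c_1(Q_E)=0$, and additivity of $c_1$ in short exact sequences gives $c_1(E^{\vee\vee})=c_1(E)$.

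For (ii) and (iii), the Whitney identity yields, in all degrees,
\[
c_2(E^{\vee\vee}) = c_2(E) + c_2(Q_E), \qquad c_3(E^{\vee\vee}) = c_3(E) + c_1(E)\,c_2(Q_E) + c_3(Q_E).
\]
So I only need to identify $c_i(Q_E)$. For $Q_E$ of dimension $1$, I would first compute $c(\mathcal{O}_\ell)=1-H^2-2H^3$ on $\PP$ from the Koszul resolution of a line, concluding $c_2(\mathcal{O}_\ell)=-1$, and then extend to an arbitrary pure $1$-dimensional sheaf via a filtration whose graded pieces are of the form $\calo_C$ for reduced irreducible curves $C$; additivity of $c_2$ (valid because $c_1$ vanishes throughout) reduces to the statement $c_2(\mathcal{O}_C)=-\deg C$, which is standard (e.g.\ by Grothendieck--Riemann--Roch, or by intersecting with a generic hyperplane). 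In the mixed case $0\to Z\to Q_E\to T\to 0$ with $T$ pure $1$-dimensional, the $0$-dimensional part $Z$ contributes nothing to $c_2$, so $c_2(Q_E)=-\mult(T)=-\mult(Q_E)$. The formulas in (ii) then follow after substitution. For (iii), the same Koszul computation at a reduced point gives $\mathrm{ch}(\mathcal{O}_p)=H^3$, hence $c_3(\mathcal{O}_p)=2\,[\mathrm{pt}]$, and additivity gives $c_3(Q_E)=2\,\mathrm{length}(Q_E)$.

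For (iv), let $L\hookrightarrow E^{\vee\vee}$ be a saturated rank $1$ subsheaf; being a saturated subsheaf of a reflexive sheaf on $\PP$, $L$ is reflexive of rank $1$, hence $L\simeq\op3(a)$ for some integer $a$. Set $L':=L\cap E$. The quotient $L/L'$ embeds into $E^{\vee\vee}/E=Q_E$, and therefore has dimension $\le 1$, so $c_1(L/L')=0$ and $c_1(L')=c_1(L)=a$. Stability of $E$ with $c_1(E)=-1$ forces $a<-1/2$, i.e.\ $a\le -1$, which gives $\mu(L)=a\le -1<-1/2=\mu(E^{\vee\vee})$, as required for $\mu$-stability of $E^{\vee\vee}$.

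Finally, (v): the inequality $\mult(Q_E)\ge 1$ is immediate from the fact that $Q_E$ is a nontrivial sheaf of pure dimension $1$. Applying Bogomolov's inequality to the $\mu$-stable reflexive sheaf $E^{\vee\vee}$ on $\PP$ gives $4c_2(E^{\vee\vee})\ge c_1(E^{\vee\vee})^2=1$, so $c_2(E^{\vee\vee})\ge 1>0$, and from (ii), $c_2(E)=c_2(E^{\vee\vee})+\mult(Q_E)\ge\mult(Q_E)$. The only step with genuine content is (iv); everything else is bookkeeping. I expect the main technical nuisance to be (ii), namely convincing the reader of the identity $c_2(Q)=-\mult(Q)$ for a pure $1$-dimensional sheaf on $\PP$ in a clean way without invoking heavy machinery.
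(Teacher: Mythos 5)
Your proposal is correct and follows essentially the same route as the paper: items (i)--(iii) are the Whitney-formula bookkeeping on the fundamental sequence that the paper leaves implicit, item (iv) is proved by intersecting a would-be destabilizing rank-1 subsheaf of $E^{\vee\vee}$ with $E$ and noting that $c_1$ is unchanged because the quotient has dimension $\le 1$ (exactly the paper's argument with $A'=\ker(\varepsilon\circ i)$), and item (v) reduces to $c_2(E^{\vee\vee})\ge 1$, which you get from Bogomolov where the paper cites Hartshorne's Corollary 3.3 for stable reflexive sheaves. The only cosmetic caveat is in (ii): the graded pieces of the support filtration of a pure $1$-dimensional sheaf are rank-$r$ sheaves on integral curves rather than literal structure sheaves, but the identity $c_2(T)=-\mult(T)$ you need is unaffected.
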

\begin{proof}
Since $E$ is torsion free, it fits in the following exact 
sequence:
\begin{equation}\label{fundamentalcomponents}
0 \to E \to E^{\vee \vee} \xrightarrow{\varepsilon} Q_E \to 0.
\end{equation}
The statement (i) is clear, since $E$ is torsion free. 
Therefore, computing the Chern classes we have the items (ii) 
and (iii). To show (iv), it is enough to consider the triple
$0\to A\xrightarrow{i} E^{\vee\vee}\to B\to0$ where both $A$ 
and $B$ are rank-1 torsion free sheaves with $c_1(A)+c_1(B)= 
c_1(E^{\vee\vee})=-1$. Since $\dim Q\le1$, it follows that
$\dim\mathrm{im}(\varepsilon\circ i)\le1$, where $\varepsilon$
is the epimorphism in \eqref{fundamentalcomponents}. 
Therefore, the rank 1 sheaf $A'=\ker(\varepsilon\circ i)$ 
satisfies the equality $c_1(A')=c_1(A)$. On the other hand, 
since $A'$ is a subsheaf of the stable sheaf $E$, it follows 
that $c_1(A')\le c_1(E)\le-1$. Hence, $c_1(A)\le 
c_1(E^{\vee\vee})=-1$ and $c_1(B)\ge0$, which implies that 
the reduced Hilbert polynomial 
of $A$ is less than that of $E^{\vee\vee}$, that is $E^{\vee 
\vee}$ is stable. In particular, $c_2(E^{\vee \vee}) \geq 1$,
see \cite[Cor. 3.3]{Harshorne-Reflexive}. Thus, if $\dim Q_E=1$ 
then, by (iv), $c_2(E)\ge\mult Q_E\ge1$.
\end{proof}

The next Lemma is an easy technical result that we use later in this section.
\begin{Lema}\label{nonemptyfamily} For each $F \in 
\calr(-1,1,1)$, consider the set $Y_F:=\{ l \in G(2,4);~ 
\sing(F) \subset l\}$, and the set
\begin{equation}\label{Y(r)}
Y(r) := \{(F,l,\varphi)|~ (F,l) \in \calr(-1,1,1)\times 
Y_F,~ \varphi \in  \Hom(F,i_{*}\mathcal{O}_l(r))_e / 
\mathrm{Aut}(i_{*}\mathcal{O}_l(r))\}.
\end{equation}
Then, for each $r \in \{-1,0,1\}$, the set $Y(r)$ is an 
irreducible scheme of dimension $8+2r$. In addition, the 
closure in $ \calm(-1,2,2-2r)$ of the image of the morphism 
$Y(r)\to\calm(-1,2,2-2r),\ (F,l,\varphi) 
\mapsto[\ker\varphi]$ 
is never an irreducible component of $\calm(-1,2,2-2r)$. 

\begin{proof}
For each $[F] \in \calr(-1,1,1)$, $\sing(F)$ is a unique 
point, 
so that the set $Y_F$ is a surface in the Grassmannian 
$G(2,4)$ isomorphic to $\mathbb{P}^2$. Therefore it is 
irreducible of dimension $2$. To see that the dimension of  
$\Hom(F,i_{*}\mathcal{O}_l(r)_e /\mathrm{Aut}(F,i_{*}
\mathcal{O}_l(r)\}$ is $3+2r$, apply the functor 
$\lhom(-,i_{*}\mathcal{O}_l(r))$ to the sequence 
\eqref{resR(-1,1,1)}, and recall that 
$\dim\h^0(\lext^1(F,i_{*}\mathcal{O}_l(r))) = 1$. 
Putting all these data together, we define the set 
$Y(r)$ by \eqref{Y(r)}.
By construction it is an irreducible scheme of dimension 
$8+2r$. Indeed one  has the surjective projection 
$$Y(r) \onto \calr(-1,1,1)\times Y_F ~~, ~~ ([F],l,\varphi) \mapsto ([F],l) ~~, $$
onto an irreducible scheme $\calr(-1,1,1)\times Y_F$ of 
dimension 5 (see Remark \ref{R(-1,1,1)}), with fibers 
$$ \Hom(F,i_{*}\mathcal{O}_l(r))_e/ 
\mathrm{Aut}(i_{*}\mathcal{O}_l(r)) 
\overset{\textrm{open}}{\hookrightarrow} 
\Hom(F,i_{*}\mathcal{O}_l(r))/\mathrm{Aut}(i_{*}
\mathcal{O}_l(r))$$ 
which have dimension $3+2r$.
\end{proof}
\end{Lema}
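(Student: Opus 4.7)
The proof proceeds in two halves: establish that $Y(r)$ is irreducible of the stated dimension, and then use the dimension lower bound for components of $\calm$ to rule out the closure of its image as a component.

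First, I would project $Y(r)$ down onto the base $B := \{(F, l) \in \calr(-1,1,1) \times G(2,4) \mid \sing(F) \in l\}$. By Remark \ref{R(-1,1,1)} we have $\calr(-1,1,1) \cong \PP$ via $F \mapsto \sing(F)$, and the fibres of the first projection $B \to \calr(-1,1,1)$ are the sets $Y_F$ of lines through the point $\sing(F) \in \PP$, each isomorphic to $\mathbb{P}^2$. Thus $B$ is the classical point-line incidence flag variety, smooth and irreducible of dimension $3+2 = 5$.

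Next, for each $(F, l) \in B$ I would compute the fibre dimension $\dim\!\bigl(\Hom(F, i_*\mathcal{O}_l(r))_e/\Aut(i_*\mathcal{O}_l(r))\bigr)$ by applying $\lhom(-, i_*\mathcal{O}_l(r))$ to the resolution \eqref{resR(-1,1,1)} of $F$. The key geometric observation is that the three linear forms comprising the map $\alpha$ all vanish at $\sing(F) \in l$, so their restrictions to $l$ are scalar multiples of a single section $\ell \in H^0(\mathcal{O}_l(1))$ vanishing at $\sing(F)$. A short diagram chase then gives
\begin{equation*}
\lhom(F, i_*\mathcal{O}_l(r)) \cong 2\cdot\mathcal{O}_l(r+1), \qquad \lext^1(F, i_*\mathcal{O}_l(r)) \cong \mathcal{O}_{\sing(F)},
\end{equation*}
the second isomorphism matching the hint $h^0(\lext^1(F, i_*\mathcal{O}_l(r))) = 1$. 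Since $H^1(\mathcal{O}_l(r+1)) = 0$ for $r \geq -1$, the local-to-global spectral sequence yields $\dim \Hom(F, i_*\mathcal{O}_l(r)) = 2(r+2)$; quotienting by $\Aut(i_*\mathcal{O}_l(r)) \cong \mathbb{C}^*$ leaves a fibre of dimension $2r+3$, which is open in a projective space and hence irreducible. Totalling, $\dim Y(r) = 5 + (2r+3) = 2r+8$, and irreducibility of $Y(r)$ follows from the irreducibility of base and fibres.

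Finally, for the non-component statement I would note that the modular map $Y(r) \to \calm(-1,2,2-2r)$, $(F, l, \varphi) \mapsto [\ker\varphi]$, is injective: from $[E]$ one recovers $F \cong E^{\vee\vee}$, the line $l$ as the support of $Q_E = E^{\vee\vee}/E$, and $[\varphi]$ as the class of the canonical quotient $E^{\vee\vee} \twoheadrightarrow Q_E$. Hence the image has dimension $2r+8$. By Corollary \ref{dim calm}, every irreducible component of $\calm(-1,2,2-2r)$ has dimension at least $8\cdot 2 - 3 + 2(-1) = 11$; but for $r \in \{-1,0,1\}$ we have $2r+8 \in \{6,8,10\}$, all strictly less than $11$, so the closure of the image is simply too small to be a component.

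The main technical point is the explicit identification of $\lhom(F, i_*\mathcal{O}_l(r))$ and $\lext^1(F, i_*\mathcal{O}_l(r))$, which rests on the geometric fact that the three defining forms of the resolution \eqref{resR(-1,1,1)} share a common zero on $l$; once this is in hand, the rest is routine bookkeeping combined with the uniform dimension bound supplied by Corollary \ref{dim calm}.
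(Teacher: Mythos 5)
Your proof is correct and follows essentially the same route as the paper: fibre the set over the $5$-dimensional incidence variety, compute the fibre dimension $2r+3$ from the resolution \eqref{resR(-1,1,1)}, and conclude via the lower bound $11$ of Corollary \ref{dim calm}. In fact your write-up is slightly more explicit than the paper's, both in identifying $\lhom(F,i_*\mathcal{O}_l(r))\cong 2\cdot\mathcal{O}_l(r+1)$ and $\lext^1(F,i_*\mathcal{O}_l(r))\cong\mathcal{O}_{\sing(F)}$ via the common zero of the three linear forms on $l$, and in spelling out the final dimension comparison that the paper leaves implicit.
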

With the previous Lemma, we are already in position to 
prove the first main result of this section.

\begin{Teo}\label{M(-1,2,4)}
The moduli space $\calm(-1,2,4)$ of rank 2 stable sheaves 
on $\PP$ with Chern classes $c_1=-1,\ c_2=2,\ c_3= 4$ is 
the closure $\overline{\calr(-1,2,4)}$ of the moduli space 
$\calr(-1,2,4)$ of the rank 2 reflexive sheaves with Chern 
classes $c_1=-1,\ c_2=2,\ c_3=4$. Hence $\calm(-1,2,4)$ is irreducible, rational,
generically smooth, and of dimension 11. Moreover,
\begin{equation}\label{dim Sing=0}
\{[F]\in\calm(-1,2,4)|\ \dim\sing(F)=0\}=\calr(-1,2,4).
\end{equation}
\end{Teo}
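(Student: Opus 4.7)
The plan is to analyze $\calm(-1,2,4)$ via the double-dual exact sequence \eqref{fundamentalcomponents}, stratifying by the structure of $Q_E = E^{\vee\vee}/E$, and to show that every non-reflexive stratum has dimension strictly less than $11$. Since every irreducible component of $\calm(-1,2,4)$ has dimension at least $11$ by Corollary \ref{dim calm}, this forces $\calr(-1,2,4)$ to be dense in $\calm(-1,2,4)$. The remaining properties (irreducibility, rationality, generic smoothness, dimension $11$) then transfer from $\calr(-1,2,4)$ to its closure via property (I) on page \pageref{sigmas}, using $\dim \calr(-1,2,4) = 2^2+3\cdot 2+1 = 11$ from \eqref{dim R -1}.

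For the stratum with $\dim Q_E = 0$ and $Q_E \neq 0$, Proposition \ref{ChernClasses}(iii) gives $c_3(E^{\vee\vee}) = 4 + 2\,\mathrm{length}(Q_E) \geq 6$, contradicting the bound $c_3 \leq c_2^2 = 4$ recalled in property (IV). This already establishes \eqref{dim Sing=0}, since a torsion-free sheaf with $\dim\sing(E)=0$ is either reflexive or has $\dim Q_E = 0$ with $Q_E \neq 0$. For the stratum with $\dim Q_E = 1$, Proposition \ref{ChernClasses}(v) combined with $c_2(E^{\vee\vee}) \geq 1$ forces $\mathrm{mult}(Q_E) = 1$ and $E^{\vee\vee} \in \calr(-1,1,1)$, and Remark \ref{R(-1,1,1)} pins $c_3(E^{\vee\vee}) = 1$. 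Writing the torsion filtration $0 \to Z_E \to Q_E \to T_E \to 0$ with $T_E \cong i_*\calo_l(r)$ pure one-dimensional on a line $l$ and $Z_E$ artinian of length $s$, Proposition \ref{ChernClasses}(ii) unwinds to $c_3(Q_E) = -4$, i.e., $r = -1 - s$.

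The key intermediate claim is that $s = 0$. The composition $F := E^{\vee\vee} \twoheadrightarrow Q_E \twoheadrightarrow T_E = i_*\calo_l(-1-s)$ yields a nonzero morphism $F \to i_*\calo_l(-1-s)$, so that $\h^0(\lhom(F|_l, \calo_l(-1-s))) \neq 0$. The splitting dichotomy \eqref{splitingF} shows that $\lhom(F|_l, \calo_l(-1-s))$ equals either $\calo_l(-s) \oplus \calo_l(-1-s)$ (if $\sing(F) \notin l$) or $2\cdot\calo_l(-s)$ (if $\sing(F) \in l$), both of which have vanishing $H^0$ as soon as $s \geq 1$. Hence $s = 0$ and $Q_E \cong i_*\calo_l(-1)$. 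The non-reflexive locus is then parameterized by triples $(F,l,[\varphi]) \in \calr(-1,1,1) \times G(2,4) \times \mathbb{P}(\Hom(F,i_*\calo_l(-1)))$, and the same two cases of \eqref{splitingF} give $\dim\Hom(F,i_*\calo_l(-1)) = 1$ if $\sing(F) \notin l$ and $=2$ if $\sing(F) \in l$ (consistent with Lemma \ref{nonemptyfamily} at $r=-1$), so the total dimension is at most $\max(3+4+0,\,3+2+1) = 7 < 11$. Combining with Corollary \ref{dim calm}, no irreducible component of $\calm(-1,2,4)$ can live inside the non-reflexive locus, proving the theorem. The main technical obstacle I expect is the $\lhom$-vanishing step that rules out $s \geq 1$; the rest reduces to bookkeeping with Chern classes and a straightforward dimension count.
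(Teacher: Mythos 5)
Your proposal is correct and follows essentially the same route as the paper's proof: stratify by $Q_E$ via the double-dual sequence, use the bound $0\le c_3(E^{\vee\vee})\le c_2(E^{\vee\vee})^2$ together with the restriction behaviour \eqref{splitingF} to force $Q_E\cong i_*\calo_l(-1)$ in the one-dimensional case, and eliminate every non-reflexive stratum by comparing its dimension ($\le 7$) with the lower bound $11$ from Corollary \ref{dim calm}. The only cosmetic differences are that you exclude $\mult Q_E=2$ directly from $c_2(E^{\vee\vee})\ge1$ instead of via the nonexistence of stable bundles with $(c_1,c_2)=(-1,0)$, and that the equality $c_3(E^{\vee\vee})=1$ is really forced by \eqref{c3 le} together with $c_3\ge0$ and the parity $c_3\equiv c_1c_2\ (\mathrm{mod}\ 2)$, not by Remark \ref{R(-1,1,1)} itself.
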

\begin{proof}
By \cite[Thm 9.2]{Harshorne-Reflexive}, $\calr(-1,2,4)$ is 
irreducible of dimension $11$, and $\overline{\calr(-1,2,4)}$ 
is an irreducible component of $\calm(-1,2,4)$.
Consider $[E] \in \calm(-1,2,4) \setminus 
\calr(-1,2,4)$. By Proposition \ref{ChernClasses}, since 
$E^{\vee\vee}$ is stable, and either $\dim Q_E=1$ and $1\leq 
\mult Q_E \leq c_2(E)=2$, or $\dim Q_E=0$. We will study the 
possibilities for $\dim Q_E$ and $\mult Q_E$. 

i) If $\dim Q_E=1,\ \mult Q_E = 2$, then by Proposition 
\ref{ChernClasses}.b) $c_2(E^{\vee \vee}) = 0$, 
and by \cite[Thm 8.2]{Harshorne-Reflexive}, 
\begin{equation}\label{c3 le}
c_3(E^{\vee\vee}) \leq c_2(E^{\vee \vee})^2, 
\end{equation}
since $E^{\vee\vee}$ is stable. Therefore $c_3(E^{\vee 
\vee})=0$, that is $E^{\vee \vee}$ is a stable locally free 
rank 2 sheaf. Since $c_1(E^{\vee \vee})=-1,\ c_2(E^{\vee 
\vee})=0$, this contradicts to \cite[Cor. 3.5]{H1978}.

ii) If $\dim Q_E=\mult Q_E = 1$, then $c_2(E^{\vee \vee})=1$ 
and, as above, $0\le c_3(E^{\vee \vee})\le c_2^2(E^{\vee 
\vee})=1$. 
Moreover, the equality $\mult Q_E = 1$ implies that $Q_E$ is 
supported on a line, say, $l$ and it fits in an exact 
sequence of the form:
\begin{equation}\label{extensionQ_E}
0 \to Z_E\to Q_E\to i_{*}\mathcal{O}_l(r) \to 0,
\end{equation}
where $Z_E$ is the maximal $0$-dimensional subsheaf of $Q_E$ 
of length $s\ge0$, and $\mathcal{O}_{l}$ is the structure 
sheaf of the line $l$. This sequence and Proposition 
\ref{ChernClasses}.b) yield
\begin{equation} \label{chi(Q_E(t))}
\chi(Q_E(t)) = t+r+s+1,\ \ \ c_3(Q_E)=2(r+s-1),
\end{equation}
and $c_2(E^{\vee \vee})=1,\ \ \ c_3(E^{\vee\vee})=2r+2s+3
\ge0$. (Here the inequality $c_3(E^{\vee \vee})\ge0$ follows 
from \cite[Prop. 2.6]{Harshorne-Reflexive}.) Thus from 
\eqref{c3 le} we obtain $r+s=-1$, i. e.
\begin{equation}\label{c2,c3}
c_2(E^{\vee \vee})=c_3(E^{\vee \vee})=1.
\end{equation}
As $E^{\vee\vee}$ is stable, this means that 
$[E^{\vee\vee}]\in M(-1,1,1)$.
Note that, by \eqref{extensionQ_E}, there is an epimorphism 
$E^{\vee \vee} \twoheadrightarrow Q_E$, so that from 
\eqref{extensionQ_E} and the formula (\ref{splitingF}) in 
which we set $F=E^{\vee \vee}$ it follows that $r\ge-1$. This 
together with the relation $r+s=-1$ and the inequality 
$s\ge0$ shows that the only possible values for $r$ and $s$ 
are $r=-1,\ s=0$. We thus have $Q_E/Z_E=i_{*}\mathcal
{O}_l(-1)$. This together with \eqref{c2,c3} yields that, if 
$l\cap\sing(E^{\vee\vee})=\emptyset$, then, since 
$[E^{\vee\vee}]\in\calm(-1,1,1)$, it follows that $[E]$ 
belongs to the scheme $\mathcal{X}(-1,1,1,-1,0)$ defined in display 
\eqref{def calX}. Note that $\dim\mathcal{X}(-1,1,1,-1,0)=7$ 
by Remark \ref{X(-1,1,1,-1,0)}. Since by the deformation 
theory (see Theorem \ref{dimext1}) any irreducible component 
of $\calm(-1,2,4)$ has dimension at least 11, the last 
equality shows that the dimension of $\mathcal{X}(-1,1,1,-1,0)$
is too small to fill an irreducible component of 
$\calm(-1,2,4)$.

iii) If $\dim Q_E=0$, then $s=\mathrm{length}(Q_E)>0$
and, by Proposition \ref{ChernClasses}.c), $c_2(E)= 
c_2(E^{\vee\vee})=2$, $c_3(E^{\vee\vee})=c_3(E)+2s=
4+2s\ge6$. Therefore, $4=c_2^2(E^{\vee\vee})<c_3(E^{\vee\vee
})$. But this inequality contradicts the stability of 
$E^{\vee \vee}$ by \cite[Thm. 8.2(d)]{Harshorne-Reflexive}.

In conclusion, we have proved that $\calm(-1,2,4)= 
\overline{\calr(-1,2,4)}$, and the equality in display
\eqref{dim Sing=0} follows from iii) above.
The rationality of $\calr(-1,2,4)$ is known from 
\cite{Chang2}. Hence, $\calm(-1,2,4)$ is rational.
\end{proof}

As a by product of the previous proof, we obtain the 
following interesting result.

\begin{Cor}
The complement of $\calr(-1,2,4)$ in $\calm(-1,2,4)$ is 
precisely $\mathrm{X}(-1,1,1,-1,0)$.
\end{Cor}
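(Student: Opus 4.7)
The statement splits into two inclusions. The inclusion $\mathrm{X}(-1,1,1,-1,0)\subseteq\calm(-1,2,4)\setminus\calr(-1,2,4)$ is the easy one: every sheaf in the dense open $\mathcal{X}(-1,1,1,-1,0)$ has nonzero quotient $Q_E=i_{*}\calo_l(-1)$, hence is non-reflexive, while the non-reflexive locus in $\calm(-1,2,4)$ is closed because reflexivity is an open condition on any flat family of torsion free sheaves. Taking closures thus preserves the inclusion.

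The reverse inclusion $\calm(-1,2,4)\setminus\calr(-1,2,4)\subseteq\mathrm{X}(-1,1,1,-1,0)$ relies on case (ii) of the proof of Theorem \ref{M(-1,2,4)}, which shows that every $[E]$ in the complement is reconstructed from a triple $([F],l,[\varphi])$ with $[F]\in\calr(-1,1,1)$, $l\in G(2,4)$, and $\varphi:F\onto i_{*}\calo_l(-1)$ a class of epimorphism modulo $\mathrm{Aut}(i_{*}\calo_l(-1))$. If $l\cap\sing(F)=\emptyset$, then $[E]\in\mathcal{X}(-1,1,1,-1,0)$ by the definition \eqref{def calX}. In the remaining boundary case $p:=\sing(F)\in l$, I would exhibit $[E]$ as a flat specialization of sheaves in $\mathcal{X}(-1,1,1,-1,0)$: pick a suitable family of lines $\{l_t\}_{t\in B}$ with $l_0=l$ and $p\notin l_t$ for $t\ne0$, lift the class $[\varphi]$ to a family $\varphi_t:F\onto i_{*}\calo_{l_t}(-1)$ via the relative Quot scheme parametrizing quotients of $F$ with Hilbert polynomial $t$, and set $E_t:=\ker\varphi_t$; the flat limit is $E_0=E$, placing $[E]$ in $\mathrm{X}(-1,1,1,-1,0)=\overline{\mathcal{X}(-1,1,1,-1,0)}$.

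The main obstacle is the lifting step for the boundary class $[\varphi]$: upper semi-continuity forces $\dim\Hom(F,i_{*}\calo_{l_t}(-1))$ to jump from $1$ at generic $t$ to $2$ at $t=0$ (this is visible from the splitting formula \eqref{splitingF}, where the extra summand $\calo_p$ of $F|_{l_0}$ contributes nothing to $\Hom$ at generic $t$ but produces a two-dimensional space at $t=0$), so not every boundary class lifts over an arbitrary pencil. One must therefore either vary the pencil or invoke irreducibility of the appropriate component of the relative Quot scheme to show that each boundary class $[\varphi_0]\in\mathbb{P}^1$ is hit by some such family. Once this lifting is in hand, the three paragraphs combine to give the required equality.
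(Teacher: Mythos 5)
Your decomposition is correct, and it is in fact more explicit than the paper's own treatment: the paper asserts this corollary without proof, as a byproduct of the case analysis in Theorem \ref{M(-1,2,4)}, which only places $[E]$ in $\mathcal{X}(-1,1,1,-1,0)$ when $l\cap\sing(E^{\vee\vee})=\emptyset$. You have correctly isolated the one genuinely nontrivial point --- that the boundary sheaves with $\sing(E^{\vee\vee})\in l$ (the $6$-dimensional set $Y(-1)$ of Lemma \ref{nonemptyfamily}) must be shown to lie in the closure of the $7$-dimensional $\mathcal{X}(-1,1,1,-1,0)$ --- and correctly diagnosed why a single pencil of lines cannot do this: $\dim\Hom(F,i_*\calo_l(-1))$ jumps from $1$ to $2$ as $l$ specializes to a line through $\sing(F)$, so each pencil reaches only one point of the boundary $\mathbb{P}^1$. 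The lifting step you leave open is exactly what Proposition \ref{descriptn of B,B'} supplies (later in the paper): the parameter space $B$ of triples $([F],l,\epsilon\ \mathrm{mod}\,A_l)$ defined in \eqref{B} is irreducible of dimension $7$, each fiber over $[F]\in\calr(-1,1,1)$ being the blow-up of $G(2,4)$ along the plane $Z_y$ of lines through $y=\sing(F)$, with exceptional divisor precisely the $\mathbb{P}^1$-bundle of boundary classes; moreover $B$ carries the flat universal family $\boldsymbol{\mathcal{E}}$. The resulting modular morphism $B\to\calm(-1,2,4)$ has image equal to the whole complement by your reduction, and since the locus $\{l\cap\sing(F)=\emptyset\}$ is dense in the irreducible $B$, the image is contained in $\overline{\mathcal{X}(-1,1,1,-1,0)}=\mathrm{X}(-1,1,1,-1,0)$. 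Substituting this for your Quot-scheme suggestion closes the only remaining gap; the rest of your argument (openness of reflexivity for the easy inclusion, the Chern class and $Q_E$ analysis for the hard one) is sound.
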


\section{Description of families with 0-dimensional 
singularities} 
\label{irreducible of M(2)}

In this section we describe explicitly the sheaves in the 
families $\mathrm{T}(-1,2,2,1)$, $\mathrm{T}(-1,2,4,1)$ and 
$\mathrm{T}(-1,2,4,2)$. This description will be used later 
in the study of irreducible components of the moduli spaces 
$\mathcal{M} (-1,2,c_3)$ for $c_3=2$ and $c_3=4$. Everywhere 
below for a coherent sheaf $F$ on a given scheme $X$ 
we denote by $\mathbf{P}(F)$ the projective spectrum of the 
symmetric algebra $\mathrm{Sym}_{\calo_X}(F)$. Besides, as 
before, for any point $p\in\PP$ we denote $A_p=\mathrm{Aut}
(\calo_p)\simeq\mathbf{k}$.

We start with the following theorem describing, for $i=1$ 
and $i=2$, the irreducible families $\mathrm{T}(-1,2,2i,1)$ 
defined in Section \ref{New Irreducible Components} as 
the closures in 
$\calm(-1,2,2i-2)$ of their open subsets $\mathcal{T}
(-1,2,2i,1)$. For these $i$, consider the moduli spaces 
$R_i:=\calr(-1,2,2i)$ and the universal $\calo_{\PP\times 
R_i}$-sheaves $\mathbf{F}_i$, respectively.

\begin{Teo}\label{T(-1,2,2i,1)}
The scheme $\mathrm{T}(-1,2,2i,1)$, for $i\in\{1,2\}$, is an irreducible 
15-dimensional component of $\calm(-1,2,2i-2)$. This component 
contains an open subset of $\mathrm{T}(-1,2,2i,1)$, isomorphic 
to $\mathbf{P}(\mathbf{F}_i)$, which consists of all the points 
$[E]\in\calm(-1,2,2i-2)$ such that $E^{\vee\vee}/E$ is a 
0-dimensional scheme of length 1. This subset $\mathbf{P}
(\mathbf{F}_i)$ contains the open subset $\mathcal{T}
(-1,2,2i,1)$.

\end{Teo}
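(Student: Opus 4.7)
The plan is to combine Theorem \ref{0dcomp} with an explicit identification of the generic locus of $\mathrm{T}(-1,2,2i,1)$ as the relative projectivization of the universal reflexive sheaf on $R_i$. First, I would verify that $R_i = \calr(-1,2,2i)$ is a nonsingular, irreducible component of expected dimension $11$: for $i = 2$, this is Property (I) of Section \ref{Rational Components} applied to $(-1,2,4) \in \Sigma_{-1}$ (and reconfirmed in Theorem \ref{M(-1,2,4)}); for $i = 1$, the analogous statement follows from Chang's classification \cite{Chang} of reflexive sheaves with $c_2 \le 3$. Since $c_1 = -1$ is odd, a universal family $\mathbf{F}_i$ on $\PP \times R_i$ exists by \cite[Thm.~4.6.5]{HL}. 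Applying Theorem \ref{0dcomp}(i) with $\calr^{*} = R_i$ and $s = 1$ then yields that $\mathrm{T}(-1,2,2i,1)$ is an irreducible component of $\calm(-1,2,2i-2)$ of dimension $8 \cdot 2 - 3 - 2 + 4 = 15$.

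Second, I would construct an injective morphism $\Phi \colon \mathbf{P}(\mathbf{F}_i) \hookrightarrow \calm(-1,2,2i-2)$, where $\mathbf{P}(\mathbf{F}_i)$ is the relative projectivization over $\PP \times R_i$. A point in the fiber over $(p,[F])$ corresponds to a class $[\varphi]$ of surjection $F \twoheadrightarrow \calo_p$ modulo $A_p$; to it I assign the stable torsion-free sheaf $E := \ker(\varphi)$, which has Chern classes $(-1,2,2i-2)$. A direct diagram chase shows $E^{\vee\vee} \simeq F$ and $Q_E \simeq \calo_p$ even when $p \in \sing(F)$: the reflexivization $E^{\vee\vee}$ sits between $E$ and $F$, so $F/E^{\vee\vee}$ is a quotient of $F/E = \calo_p$, and the alternative $E^{\vee\vee} = E$ would force $Q_E = 0$, contradicting $F/E \neq 0$. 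Over the open subset $(\PP \times R_i)_0 = \{(p,[F]) : p \notin \sing(F)\}$, where $\mathbf{F}_i$ is locally free of rank $2$, this recovers the $\mathbf{P}^1$-bundle description of $\calt(-1,2,2i,1)$ from Section \ref{New Irreducible Components}, thereby realizing the containment $\calt(-1,2,2i,1) \subset \mathbf{P}(\mathbf{F}_i)$ demanded by the theorem. Injectivity of $\Phi$ is standard: $[E]$ determines $F = E^{\vee\vee}$, the point $p = \supp(Q_E)$, and the canonical quotient $F \twoheadrightarrow Q_E$.

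Third, I would identify $\Phi(\mathbf{P}(\mathbf{F}_i))$ with the locus $U := \{[E] \in \calm(-1,2,2i-2) : Q_E \text{ is } 0\text{-dimensional of length } 1\}$. For $[E] \in U$, Proposition \ref{ChernClasses}(iii)--(iv) forces $[E^{\vee\vee}] \in \calr(-1,2,2i) = R_i$, yielding the lift to $\mathbf{P}(\mathbf{F}_i)$; the reverse inclusion is by construction. Since $\mathbf{P}(\mathbf{F}_i)$ is irreducible of dimension $15$ (equal to $\dim \mathrm{T}(-1,2,2i,1)$), its image $U$ is dense in $\mathrm{T}(-1,2,2i,1)$, and the scheme-theoretic isomorphism $\mathbf{P}(\mathbf{F}_i) \simeq U$ then follows from the representability of the moduli functor together with the universal property of $\mathbf{P}(\mathbf{F}_i)$.

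The principal obstacle will be proving that $U$ is open (and not merely constructible and dense) in $\mathrm{T}(-1,2,2i,1)$. This is delicate because the double-dualization map does not extend to a morphism of moduli spaces, so $\mathrm{length}(Q_E)$ is not a priori semicontinuous in flat families. My plan is to argue that the complement $\mathrm{T}(-1,2,2i,1) \setminus U$ consists of sheaves with either $\dim Q_E \ge 1$ or $\mathrm{length}(Q_E) \ge 2$, and that each stratum is a proper closed subvariety of strictly smaller dimension than $15$ by a direct boundary count built from the constructions of Section \ref{New Irreducible Components} (Theorem \ref{NewComponentsmixed} gives the dimensions of the strata containing $1$-dimensional singularities, while the stratum with longer $0$-dimensional $Q_E$ is $\mathrm{T}(-1,2,2i,2)$ of strictly smaller dimension by Theorem \ref{0dcomp}).
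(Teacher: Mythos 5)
Your overall route coincides with the paper's: Theorem \ref{0dcomp} supplies the fact that $\mathrm{T}(-1,2,2i,1)$ is a $15$-dimensional irreducible component, and the explicit description is obtained from the relative projective spectrum $\mathbf{P}(\mathbf{F}_i)$ together with the modular embedding $(p,[F],[\psi])\mapsto[\ker\psi]$, whose injectivity and whose identification with the length-one locus via Proposition \ref{ChernClasses} you argue exactly as the paper does. The genuine gap is in your third step, where you assert ``$\mathbf{P}(\mathbf{F}_i)$ is irreducible of dimension $15$'' without justification. This is precisely the technical heart of the paper's proof and it is not automatic: over the locus $\{(p,[F])\ |\ p\in\sing(F)\}$ the fibre of $\mathbf{P}(\mathbf{F}_i)\to\PP\times R_i$ jumps from $\mathbb{P}^1$ to $\mathbb{P}^2$ (visible from the resolution $0\to\op3(-3)\to2\cdot\op3(-1)\oplus\op3(-2)\to F\to0$), so a priori $\mathbf{P}(\mathbf{F}_i)$ could have points, or even components, not lying in the closure of the $\mathbb{P}^1$-bundle over $(\PP\times R_i)_0$. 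The paper disposes of this by invoking Str\o mme's result \cite[Lemma 4.5]{St} that $\mathbf{P}(F)$ is irreducible of dimension $4$ for each such reflexive $F$; equivalently one can use the relative resolution of Remark \ref{resolution for F2} to exhibit $\mathbf{P}(\mathbf{F}_i)$ as a Cartier divisor in $\mathbf{P}(\mathbf{L}_1)$, so that every component has dimension at least $15$ and the $13$-dimensional locus over the singular points cannot split off. Without this input you cannot conclude that the sheaves $[E]$ with $\supp(Q_E)\subset\sing(E^{\vee\vee})$ --- which belong to your $U$ but not to $\calt(-1,2,2i,1)$ --- lie in $\overline{\calt(-1,2,2i,1)}=\mathrm{T}(-1,2,2i,1)$ at all; that containment is exactly what the irreducibility of $\mathbf{P}(\mathbf{F}_i)$ buys, and it is the only nontrivial content of the sentence ``$\mathbf{P}(\mathbf{F}_i)$ contains $\calt(-1,2,2i,1)$ as a dense open subset.''

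A secondary point: your proposed remedy for the openness of $U$ in $\mathrm{T}(-1,2,2i,1)$ --- checking that each stratum of the complement has dimension strictly less than $15$ --- is logically insufficient, since a constructible set of small dimension need not be closed, and openness of $U$ is equivalent to closedness of its complement. (The paper sidesteps this by first establishing irreducibility of $\mathbf{P}(\mathbf{F}_i)$ and only then observing that $\calt(-1,2,2i,1)$ is the open subset $\{p\notin\sing(F)\}$ of it; the openness of the image of $\mathbf{P}(\mathbf{F}_i)$ itself is ultimately a consequence of the component descriptions of Theorems \ref{M(-1,2,2)} and \ref{M(-1,2,0)} rather than of a dimension count.) Also, your argument that $E^{\vee\vee}\simeq F$ is slightly circular as written; the clean statement is that $E$ and $F$ agree outside the point $p$, and two reflexive sheaves agreeing outside a subset of codimension at least two coincide, so $E$ reflexive would force $E=F$, contradicting $F/E=\calo_p\ne0$.
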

\begin{proof}
Let $i\in\{1,2\}$. For any point $y\in R_i$ we denote 
$F_{i,y}=\mathbf{F}_i|_{\PP\times\{y\}}$. By \cite[Lemma 
4.5]{St} $\mathbf{P}(F_{i,y})$ is an irreducible 
4-dimensional scheme for any $y\in R_i$. Hence, since 
$\dim R_i=11$, $i=1,2,$ it follows that $\mathbf{P}
(\mathbf{F}_i)$ is an irreducible 15-dimensional 
scheme. Consider the structure morphisms $\pi_i:
\mathbf{P} (\mathbf{F}_i)\to\PP\times R_i$ and the 
compositions $\theta_i=pr_1\circ\pi_i:
\mathbf{P}(\mathbf{F}_i)\to\PP$. By the functorial 
property of projective spectra \cite[Ch. II, Prop. 
7.12]{H} we have for $i=1,2$:
\begin{equation}\label{descrn of P(Fi)}
\mathbf{P}(\mathbf{F}_i)=\{z=(p,[F_i],[\psi]=\psi\ 
\mathrm{mod}A_p)|\ (p,[F_i])=\pi_i(z),\ \psi:\ F_i
\to\calo_p\ \mathrm{is\ an\ epimorphism}\}.
\end{equation}
Hence, each point $z=(p,[F_i],[\psi])\in
\mathbf{P}(\mathbf{F}_i)$ defines an exact triple
\begin{equation}\label{triple in T(-1,2,2i,1)}
0\to E_i\to F_i\xrightarrow{\psi}\calo_p\to0,\ \ \ \ \ \ 
[E_i=E_{i,z}:=\ker\psi]\in\calm(-1,2,2i-2),\ \ \ 
F_i=E_{i,z}^{\vee\vee},\ \ \ \ \ \ i=1,2.
\end{equation}
This triple is globalized to an $\calo_{\PP\times R_i
}$-triple in the following way. Namely, let $\mathbf{
\tilde{F}}_i=\mathbf{F}_i\otimes_{\calo_{R_i}}\calo_{
\mathbf{P}(\mathbf{F}_i)}$ and consider the "diagonal" 
embedding $j:\mathbf{P}(\mathbf{F}_i)\hookrightarrow\PP
\times\mathbf{P}(\mathbf{F}_i),\ z\mapsto(\theta_i(z),z)$. 
By construction, $j^*\mathbf{\tilde{F}}_i=\pi_i^*
\mathbf{F}_i$ and we obtain the composition of surjections 
$\boldsymbol{\psi}:\ \mathbf{\tilde{F}}_i
\twoheadrightarrow j_*j^*\mathbf{\tilde{F}}_i
=j_*\pi_i^*\mathbf{F}_i\twoheadrightarrow j_*
\calo_{\mathbf{P}(\mathbf{F}_i)}(1)$ which yields an exact 
$\calo_{\PP\times\mathbf{P}(\mathbf{F}_i)}$-
triple, where $\mathbf{E}_i:=\ker\boldsymbol{\psi}$:
\begin{equation}\label{bold Ei}
0\to\mathbf{E}_i\to\mathbf{\tilde{F}}_i\xrightarrow
{\boldsymbol{\psi}}j_*\calo_{\mathbf{P}(\mathbf{F}_i)}(1)\to0,
\ \ \ \ \ \ i=1,2.
\end{equation}
By construction, the sheaves in this triple are flat over 
$R_i$, 
hence its restriction onto $\PP\times\{z\}$ for any $z=(p,
[F_i],[\psi])\in\mathbf{P}(\mathbf{F}_i)$ yields the triple 
\eqref{triple in T(-1,2,2i,1)} with $E_{i,z}=
\mathbf{E}_i|_{\PP\times\{z\}}$.
Thus we obtain the modular morphism 
\begin{equation}\label{modular fi}
f_i:\ \mathbf{P}(\mathbf{F}_i)\to\calm(-1,2,2i-2),\ \ \ 
z\mapsto E_{i,z},\ \ \ \ \ \ \ i=1,2.
\end{equation}
This morphism is clearly an embedding, since the data
$([F_i],p,[\psi])$ in the triple 
\eqref{triple in T(-1,2,2i,1)} are uniquely recovered from 
the point $[E=E_{i,z}]\in\calm(-1,2,2i-2)$; namely, $F_i:=
E^{\vee\vee},\ p:=\supp(Q_E)$, where$Q_E:=
E^{\vee\vee}/E\simeq\calo_p$ since $\mathrm{length}~Q_E=1$ 
and $\psi:F_i\twoheadrightarrow \calo_p$ is the quotient 
epimorphism. We therefore identify $\mathbf{P}(\mathbf{F}_i)
$ with its image under the morphism $f_i$.

Last, under the description \eqref{descrn of P(Fi)} of 
$\mathbf{P}(\mathbf{F}_i)$ we have, by the definition of
$\mathcal{T}(-1,2,2i,1)$, that $\mathcal{T}(-1,2,2i,1)
=\{z=(p,[F_i],[\psi])\in\mathbf{P}(\mathbf{F}_i)|\ 
p\not\in\sing(F_i)\}$ is an open subset of 
$\mathbf{P}(\mathbf{F}_i)$ which is dense since $\mathbf{P}
(\mathbf{F}_i)$ is irreducible. Hence, by definition, its 
closure
in $\calm(-1,2,2i-2)$ coincides with $\mathrm{T}(-1,2,2i,1)$.
In addition, it is an irreducible component of $\calm(-1,2,
2i-2)$ by Theorem \ref{0dcomp}.
\end{proof}

Let us introduce one more piece of notation. For any 
$y\in R_2$, let $F_y:=\mathbf{F}_2|_{\PP\times\{y\}}$ and
let $pr_2:\PP\times R_2\to R_2$ be the projection.
Besides, for an arbitrary $\calo_{\PP\times R_2}$-sheaf $A$
and an integer $m\in\mathbb{Z}$ let $A(m):=A\otimes(\op3(m)
\boxtimes\calo_{R_2})$. The following remark will be 
important below in the study of the scheme $\mathbf{P}
(\mathbf{E}_2)$ for the $\calo_{\PP\times\mathbf{P}
(\mathbf{F}_2)}$-sheaf $\mathbf{E}_2$ defined in \eqref{bold 
Ei} for $i=2$.
\begin{Remark}\label{resolution for F2}
From \cite[Lemma 9.6 and Proof of Lemma 9.3]
{Harshorne-Reflexive} it follows that, for any $y\in R_2$, 
the sheaf $F_y$ fits in an exact triple
\begin{equation}\label{triple for Fy}
0\to\op3(-3)\to2\cdot\op3(-1)\oplus\op3(-2)
\xrightarrow{\phi} F_y\to0.
\end{equation}
This triple clearly globalizes to a locally free 
$\calo_{\PP\times R_2}$-resolution of the universal sheaf 
$\mathbf{F}_2$:
\begin{equation}\label{triple for F}
0\to\mathbf{L}_2\to\mathbf{L}_1\xrightarrow{\Phi}\mathbf{F}
\to0,\ \ \ \ \ \  \rk\mathbf{L}_1=3,\ \ \ \rk\mathbf{L}_2=1.
\end{equation}
explicitly, $\mathbf{L}_1$ fits in the exact triple
$0\to\op3(-1)\boxtimes M_0\to\mathbf{L}_1\to\op3(-2)
\boxtimes M_1\to0$ and $\mathbf{L}_2=\op3(-3)\boxtimes M_2$,
where $M_0,\ M_1,\ M_2$ are locally free 
$\calo_{R_2}$-sheaves 
of ranks 2, 1, 1, respectively, which are determined by 
$\mathbf{F}_2$ as: $M_0=pr_{2*}(\mathbf{F}_2(1))$,
$M_1=pr_{2*}(\mathbf{F}_2(2))/pr_{2*}(\mathrm{im}(ev))$, where
$ev:\op3(1)\boxtimes M_0\to\mathbf{F}_2(2)$ is the evaluation
morphism, and $M_2=\ker(pr_{2*}\mathbf{L}_1(3)
\xrightarrow{pr_{2*}\Phi}pr_{2*}\mathbf{F}_2(3))$.
\end{Remark}

Consider the structure morphism $\pi_2:\mathbf{P}
(\mathbf{F}_2)\to\PP\times R_2$. Note that the triple 
\eqref{triple for Fy} immediately yields that $\pi_2^{-1}
(p,y)$ equals to $\mathbb{P}^1$ if $p\not\in\sing(F_y
)$, respectively, equals $\mathbb{P}^2$ if $p\in\mathrm
{Sing}(F_y)$. As $\mathrm{codim}(\sing(F_y),\PP)=3$, 
it follows by the definition of $\mathcal{T}(-1,2,2i,1)$ that
\begin{equation}\label{codim-tau}
\mathrm{codim}_{\mathbf{P}(\mathbf{F}_2)}
(\mathbf{P}(\mathbf{F}_2)\smallsetminus\mathcal{T}
(-1,2,4,1))=2.
\end{equation}

Now proceed to the study of the scheme $\mathbf{P} 
(\mathbf{E}_2)$ endowed with the structure morphism $\pi:
\mathbf{P}(\mathbf{E}_2)\to\PP\times\mathbf{P}(\mathbf{F}_2)$ 
and consider the composition $\tau=pr_1\circ\pi:\mathbf{P}
(\mathbf{E}_2)\to\PP$. Similarly to \eqref{descrn of P(Fi)}, 
in view of the functorial property of projective spectra 
\cite[Ch. II, Prop. 7.12]{H} we obtain the following 
description of the scheme $\mathbf{P}(\mathbf{E}_2)$:
\begin{equation}\label{descrn of P(E2)}
\mathbf{P}(\mathbf{E}_2)=\{w=(q,[E_2],[\varphi]=\varphi\ 
\mathrm{mod}A_q)|\ (q,[E_2])=\pi(w),\ \varphi:\ 
E_2\to\calo_q\ \mathrm{is\ an\ epimorphism}\}.
\end{equation}
It follows now that each point $w=(q,[E_2],[\varphi])\in
\mathbf{P}(\mathbf{E}_2)$ defines an exact triple
\begin{equation}\label{triple in T(-1,2,4,2)}
0\to E_w\to E_2\xrightarrow{\varphi}\calo_q\to0,\ \ \ \ \ \ 
[E_w:=\ker\varphi]\in\calm(-1,2,0).
\end{equation}
This triple is globalized to an $\calo_{\PP\times\mathbf{P}
(\mathbf{F}_2)}$-triple which is constructed completely 
similar to the triples \eqref{bold Ei}. Namely, let 
$\mathbf{\tilde{E}}_2
=\mathbf{E}_2\otimes_{\calo_{\mathbf{P}(\mathbf{F}_2)}}\calo_{
\mathbf{P}(\mathbf{E}_2)}$ and consider the "diagonal" 
embedding $j:\mathbf{P}(\mathbf{E_2})\hookrightarrow\PP\times
\mathbf{P}(\mathbf{E}_2),\ w\mapsto(\tau(w),w)$. Then 
$j^*\mathbf{\tilde{E}}_2=\pi^*\mathbf{E}_2$ and we obtain 
the composition of surjections $\boldsymbol{\varphi}:\ 
\mathbf{\tilde{E}}_2
\twoheadrightarrow j_*j^*\mathbf{\tilde{E}}_2
=j_*\pi^*\mathbf{E}_2\twoheadrightarrow j_*\calo_{\mathbf{P}
(\mathbf{E}_2)}(1)$ which yields an exact $\calo_{\PP\times
\mathbf{P}(\mathbf{E}_2)}$-triple, where $\mathbf{E}:=
\ker\boldsymbol{\varphi}$:
\begin{equation}\label{bf E}
0\to\mathbf{E}\to\mathbf{\tilde{E}}_2\xrightarrow
{\boldsymbol{\varphi}}j_*\calo_{\mathbf{P}(\mathbf{E}_2)}(1)
\to0.
\end{equation}
By construction, the restriction of this triple onto 
$\PP\times\{w\}$ for any $w=(p,[E_2],[\varphi])\in\mathbf{P}
(\mathbf{E}_2)$ yields the triple \eqref{triple in 
T(-1,2,4,2)}, where $E_w=\mathbf{E}|_{\PP\times
\{w\}}$ and where $[E_2]\in\mathbf{P}(\mathbf{F}_2)$ by 
Theorem \ref{T(-1,2,2i,1)},(ii) fits in the triple 
\eqref{triple in T(-1,2,2i,1)} for $i=2$: $0\to E_2\to 
F_2\xrightarrow{\psi}\calo_p\to0$, $F_2=E_2^{\vee\vee}$. 
Combining this triple with \eqref{triple in T(-1,2,4,2)}, we 
obtain the equality $F_2=E_w^{\vee\vee}$ and two exact 
triples, where $E=E_w$:
\begin{equation}\label{QE of length 2}
0\to E\to E^{\vee\vee}\to Q_E\to0,\ \ \ \ \ \ \ \ \ \ 
0\to\calo_q\to Q_E\to\calo_p\to0.
\end{equation} 
Besides, we have a modular morphism 
$f:\ \mathbf{P}(\mathbf{E}_2)\to\calm(-1,2,0),\ \ \ 
w\mapsto [E_w]$. From \eqref{QE of length 2} and the 
definition of the family $\mathcal{T}(-1,2,4,2)$ given in 
Theorem \ref{0dcomp} it follows that
\begin{equation}\label{open tau(-1,2,4,2)}
\mathcal{T}(-1,2,4,2)=\{[E]\in f(\mathbf{P}(\mathbf{E}_2))\ 
|\ \supp(Q_E)=p\sqcup q,\ \supp(Q_E)\cap
\sing(E^{\vee\vee})=\emptyset \}.
\end{equation}
\begin{Teo}\label{T(-1,2,4,2)}
The scheme $\mathrm{T}(-1,2,4,2)$ is an irreducible 
19-dimensional component of $\calm(-1,2,0)$. This component 
contains a dense subset, isomorphic to $f(\mathbf{P}
(\mathbf{E}_2))$, which consists of all the points 
$[E]\in\calm(-1,2,2)$ such that $E^{\vee\vee}/E$ is a 
0-dimensional scheme of length 2. This subset $f(\mathbf{P}
(\mathbf{E}_2))$ contains $\mathcal{T} (-1,2,4,2)$ as the 
dense open subset described in \eqref{open tau(-1,2,4,2)}.
\end{Teo}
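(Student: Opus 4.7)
The plan is to deduce the statement from Theorem~\ref{0dcomp} for the componenthood and dimension, and then to identify $f(\mathbf{P}(\mathbf{E}_2))$ set-theoretically inside $\calm(-1,2,0)$ as the locus of sheaves with $E^{\vee\vee}/E$ of length 2.

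First, I would apply Theorem~\ref{0dcomp} with $(e,n,m,s)=(-1,2,4,2)$ to the nonsingular irreducible component $R_2=\calr(-1,2,4)$ furnished by Theorem~\ref{M(-1,2,4)}. This at once gives that $\mathrm{T}(-1,2,4,2)$ is an irreducible component of $\calm(-1,2,0)$ of dimension $8\cdot 2-3-2+4\cdot 2=19$, whose generic sheaf $[E]$ satisfies $[E^{\vee\vee}]\in\calr^*(-1,2,4)$ and $Q_E$ artinian of length 2. Unwinding the construction in Section~\ref{New Irreducible Components} then identifies $\mathcal{T}(-1,2,4,2)$ with the dense open subset of $\mathrm{T}(-1,2,4,2)$ given by the further requirement that $\supp(Q_E)$ consist of two distinct points disjoint from $\sing(E^{\vee\vee})$, which matches \eqref{open tau(-1,2,4,2)}.

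Next, I would analyze $\mathbf{P}(\mathbf{E}_2)$ via the projection $\pi\colon\mathbf{P}(\mathbf{E}_2)\to\PP\times\mathbf{P}(\mathbf{F}_2)$. By Theorem~\ref{T(-1,2,2i,1)} the base $\PP\times\mathbf{P}(\mathbf{F}_2)$ is irreducible of dimension $3+15=18$. The sheaf $\mathbf{E}_2$ from \eqref{bold Ei} is a flat family of torsion-free rank~2 sheaves, locally free of rank~2 on a dense open subset of the base; over this open locus $\pi$ is a Zariski $\mathbb{P}^1$-bundle, whose closure is a distinguished irreducible component of $\mathbf{P}(\mathbf{E}_2)$ of dimension~19. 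Using the universal family $\mathbf{E}$ from \eqref{bf E} together with the stability of the sheaves $E_w=\mathbf{E}|_{\PP\times\{w\}}$ (which follows from the stability of $E_{2,z}$ given by Theorem~\ref{T(-1,2,2i,1)} and the triples \eqref{triple in T(-1,2,4,2)}), the modular morphism $f\colon\mathbf{P}(\mathbf{E}_2)\to\calm(-1,2,0)$ is well defined.

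Third, I would identify the image set-theoretically. From \eqref{QE of length 2} one has $f(\mathbf{P}(\mathbf{E}_2))\subseteq\Omega$, where $\Omega:=\{[E]\in\calm(-1,2,0)\mid[E^{\vee\vee}]\in R_2,\ \mathrm{length}(Q_E)=2\}$. For the reverse inclusion, given $[E]\in\Omega$, any choice of a length-one quotient $Q_E\twoheadrightarrow\calo_p$ factors the canonical map $E^{\vee\vee}\twoheadrightarrow Q_E$ through an intermediate sheaf $E_2:=\ker(E^{\vee\vee}\to\calo_p)$, with the induced quotient $E_2\twoheadrightarrow\calo_q$ reconstructing $E=\ker(E_2\to\calo_q)$; hence $[E]\in f(\mathbf{P}(\mathbf{E}_2))$. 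When $\supp(Q_E)$ is a pair of distinct points exactly two such factorizations exist, while when $\supp(Q_E)$ is a thickened point only one is available, so $f$ is generically two-to-one and hence generically finite, yielding $f(\mathbf{P}(\mathbf{E}_2))=\Omega$, a constructible subset of dimension 19.

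Finally, the chain $\mathcal{T}(-1,2,4,2)\subset\Omega=f(\mathbf{P}(\mathbf{E}_2))\subset\mathrm{T}(-1,2,4,2)$ together with the density of $\mathcal{T}(-1,2,4,2)$ in $\mathrm{T}(-1,2,4,2)$ forces $f(\mathbf{P}(\mathbf{E}_2))$ to be dense in the irreducible 19-dimensional component $\mathrm{T}(-1,2,4,2)$, completing the argument. The main technical obstacle I expect is the careful handling of $\mathbf{P}(\mathbf{E}_2)$ at points where $\mathbf{E}_2$ fails to be locally free, namely over $\sing(F_2)$ and over the distinguished point $p$ of the first elementary transformation: one must verify that such loci have strictly smaller dimension than 19, so that no spurious components of $\mathbf{P}(\mathbf{E}_2)$ inflate the image of $f$ beyond the expected 19-dimensional irreducible component. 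This is essentially a local computation using \eqref{triple for Fy} and the resolution \eqref{triple for F} of $\mathbf{F}_2$ described in Remark~\ref{resolution for F2}.
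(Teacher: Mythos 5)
Your reduction of the componenthood and dimension statements to Theorem~\ref{0dcomp}, and your set-theoretic identification of $f(\mathbf{P}(\mathbf{E}_2))$ with the locus $\Omega$ of sheaves with $\mathrm{length}(Q_E)=2$ (via factoring $E\subset E_2\subset E^{\vee\vee}$ through a length-one quotient), both match what the paper does. The gap is in your final chain $\mathcal{T}(-1,2,4,2)\subset\Omega=f(\mathbf{P}(\mathbf{E}_2))\subset\mathrm{T}(-1,2,4,2)$: the last inclusion is precisely the hard content of the theorem, not a formality. Since $\mathrm{T}(-1,2,4,2)$ is by definition the closure of $\mathcal{T}(-1,2,4,2)$, the inclusion $\Omega\subset\mathrm{T}(-1,2,4,2)$ says that every sheaf whose length-2 quotient is concentrated at a thickened point, or meets $\sing(E^{\vee\vee})$, is a flat limit of sheaves in $\mathcal{T}(-1,2,4,2)$; this is equivalent to the irreducibility of $\mathbf{P}(\mathbf{E}_2)$, which you defer to the ``technical obstacle'' at the end.

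Your proposed resolution of that obstacle --- checking that the loci over which $\mathbf{E}_2$ fails to be locally free contribute dimension strictly less than $19$ --- is not sufficient on its own. A projectivization $\mathbf{P}(\mathcal{F})$ of a non-locally-free sheaf can perfectly well have irreducible components of dimension \emph{smaller} than the generic one, sitting entirely over the degeneracy locus (already $\mathcal{F}=\calo_X\oplus\calo_Z$ with $Z\subset X$ of high codimension exhibits this); such a component of $\mathbf{P}(\mathbf{E}_2)$ would map under $f$ to points of $\Omega$ that need not lie in the closure of $\mathcal{T}(-1,2,4,2)$, and then neither $f(\mathbf{P}(\mathbf{E}_2))\subset\mathrm{T}(-1,2,4,2)$ nor the case analysis that later relies on it (Theorem~\ref{M(-1,2,0)}, connectedness) would go through. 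What is missing is a \emph{lower} bound on the dimension of every component of $\mathbf{P}(\mathbf{E}_2)$. The paper supplies it by using the resolution $0\to\op3(-3)\to2\cdot\op3(-1)\oplus\op3(-2)\to F_2\to0$ to realize $\mathbf{P}(\mathbf{E}_2)$ as a Cartier divisor (zero scheme of a section of a line bundle) inside $\mathbf{P}(\mathbf{G})$, where $\mathbf{G}$ has irreducible $5$-dimensional fibers over the irreducible $15$-dimensional $\mathbf{P}(\mathbf{F}_2)$; hence every component of $\mathbf{P}(\mathbf{E}_2)$ has dimension exactly $20-1=19$. Combined with the facts that all fibers of $\mathbf{P}(\mathbf{E}_2)\to\mathbf{P}(\mathbf{F}_2)$ are $4$-dimensional and irreducible over $\mathcal{T}(-1,2,4,1)$ (a small birational modification of $\mathbf{P}(F_2)$), and that the complement of $\mathcal{T}(-1,2,4,1)$ has codimension $2$ in $\mathbf{P}(\mathbf{F}_2)$ (so the bad part has dimension $\le 17$), this forces $\mathbf{P}(\mathbf{E}_2)$ to be irreducible. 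You would need to add this (or an equivalent) argument to close the proof.
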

\begin{proof}
We have to prove the irreducibility of $\mathbf{P}
(\mathbf{E}_2)$. Since $\mathbf{P}(\mathbf{F}_2)$ is 
irreducible, it is enough to prove that, for an arbitrary 
point $z=(p,[F_2],[\psi])\in\mathbf{P}(\mathbf{F}_2)$, 
the fiber $p_E^{-1}(z)$ of the composition $p_E:\mathbf{P}
(\mathbf{E}_2)\xrightarrow{\pi}\PP\times\mathbf{P}(\mathbf{F}_
2)\xrightarrow{pr_2}\mathbf{P}(\mathbf{F}_2)$ is irreducible 
of dimension 4. Note that the sheaves $F_2$ and 
$E_2=\mathbf{E}_2|_{\PP\times\{z\}}$ fit in the exact triple 
\eqref{triple in T(-1,2,2i,1)} for $i=2$. Besides, $F_2$ fits in the 
exact triple \eqref{triple for Fy} in which we set $F_y=
F_2$. These two triples are included in a commutative diagram
\begin{equation}\label{comm diagram0}
\xymatrix{&  & 0 \ar[d]  & 0 \ar[d]  & \\
0\ar[r]& \op3(-3)\ar@{=}[d] \ar[r] & \mathcal{G} \ar[d]\ar[r] 
& E_2\ar[d]\ar[r] & 0\\
0\ar[r]& \op3(-3) \ar[r] & 2\cdot\op3(-1)\oplus\op3(-2) 
\ar[r]^-{\phi} \ar[d]^{\lambda} & F_2\ar[r]\ar[d]^{\psi} & 0\\
& &\mathcal{O}_p\ar@{=}[r]\ar[d] & \mathcal{O}_p \ar[d] & & \\
& & 0 & 0, &  & }
\end{equation}
where $\lambda:=\psi\circ\phi$ and $\mathcal{G}=\ker
(\lambda)$. 
Here, the surjection $\lambda$ induces an embedding of a 
point 
$$ {}^{\sharp}\lambda:\ w=\mathbf{P}(\calo_p)
\hookrightarrow W:=\mathbf{P}(2\cdot\op3(-1)\oplus\op3(-2)), $$ 
and from standard properties of projective spectra it
follows that $\mathbf{P}(\mathcal{G})$ is a small 
birational modification of $W$. More precisely, this 
modification as the composition 
of the blowing up $\sigma_w$ of $W$ at the point $w$ and the 
contraction of  the proper preimage of the fiber
$\pi_W^{-1}(p,z)$ under $\sigma_w$, where $\pi_W:W\to\PP$ is
the structure morphism. In particular,$\mathbf{P}(\mathcal{G}
)$ is an irreducible projective scheme of dimension 
$\dim\mathbf{P}(\mathcal{G})=5$. By the same reason, from the 
rightmost vertical triple of \eqref{comm diagram0} it follows 
that, if $p\not\in\sing(F_2)$, the scheme 
$\mathbf{P}(E_2)$ is a small birational modification of 
$\mathbf{P}(F_2)$. Namely, this modification  
is the composition of the blowing up $\sigma_p$ of 
$\mathbf{P}(F_2)$ at its smooth point $\mathbf{P}(\calo_p)$, 
and the contaraction of the proper preimage of the fiber
$\pi^{-1}(p,z)$ under $\sigma_p$. Therefore, since by
\cite[Lemma 4.5]{St} $\mathbf{P}(F_2)$ is irreducible, 
$\mathbf{P}(E_2)$ is an irreducible scheme of dimension 
$\dim\mathbf{P}(E_2)=4$, if $p\not\in\sing(F_2)$,
i. e. when $z\in\mathcal{T}(-1,2,4,1)$. 
This implies that the scheme $\mathbf{P}(\mathbf{E}_2)_0
:=p_E^{-1}(\mathcal{T}(-1,2,4,1))$ is irreducible of 
dimension 19, since by Theorem \ref{0dcomp} $\mathcal{T}
(-1,2,4,1)$ is irreducible of dimension 15. 

Next, an easy computation with the diagram \eqref{comm 
diagram0} 
yields: $(E_2\otimes\calo_p)^{\vee}\subset
(\mathcal{G}\otimes\calo_p)^{\vee}=\mathbf{k}^5$, hence
\begin{equation}\label{fibre  in P4}
\pi_E^{-1}(z,p)=\mathbb{P}((E_2\otimes\calo_p)^{\vee})\subset
\mathbb{P}((\mathcal{G}\otimes\calo_p)^{\vee})=\mathbb{P}^4.
\end{equation}
Now, acccording to Remark \ref{resolution for F2}, the 
middle horizontal triple in \eqref{comm diagram0} globalizes 
to the exact $\calo_{\PP\times\mathbf{P}
(\mathbf{F}_2)}$-triple $0\to\mathbf{\tilde{L}}_2
\to\mathbf{\tilde{L}}_1\to\mathbf{\tilde{F}}_2\to0$ 
obtained by lifting the exact triple \eqref{triple for F} 
from $\PP\times R_2$ onto $\PP\times\mathbf{P}
(\mathbf{F}_2)$. Similarly, the rightmost vertical, the 
middle vertical and the upper horizontal triples in 
\eqref{comm diagram0} globalize, respectively, to the triple 
\eqref{bold Ei} for $i=2$, the triple $0\to\mathbf{G}\to
\mathbf{\tilde{L}}_1\to j_*\calo_{\mathbf{P}(\mathbf{F})}(1)
\to0$ and the triple
\begin{equation}\label{triple for G,E}
0\to\mathbf{\tilde{L}}_2\to\mathbf{G}\to\mathbf{E}\to0,
\end{equation}
where $\mathbf{G}$ is an $\calo_{\PP\times\mathbf{P}
(\mathbf{F}_2)}$-sheaf such that $\mathbf{G}|_{\{z\}
\times\PP}=\mathcal{G}$.
Consider the composition
$$ p_G:\mathbf{P}(\mathbf{G})
\xrightarrow{\pi_G}\mathbf{P}(\mathbf{F})\times\PP
\xrightarrow{pr_1}\mathbf{P}(\mathbf{F}) $$
where $\pi_G$ is 
the structure morphism of $\mathbf{P}(\mathbf{G})$. Note 
that the sheaf $\mathbf{\tilde{L}}_2$ in the triple 
\eqref{triple for G,E} is invertible, hence this triple 
shows that $\mathbf{P}(\mathbf{E})$ is a Cartier divisor 
in $\mathbf{P}(\mathbf{G})$ defined as the zero-set of 
some section $0\ne s\in\h^0(\calo_{\mathbf{P}(\mathbf{G})}(1)
\otimes p_G^*\mathbf{\tilde{L}}_2^{\vee})$. On the other 
hand, the fibers $p_G^{-1}(z)=\mathbf{P}(\mathcal{G})$ of 
$p_G$ are irreducible projective schemes, that is $p_G$ is a 
projective morphism with irreducible 5-dimensional fibers 
over the irreducible 15-dimensional scheme $\mathbf{P}
(\mathbf{F}_2)$. It follows that, if $\mathbf{P}
(\mathbf{E}_2)$ is reducible, then any its irreducible 
component $U$ has dimension
\begin{equation}\label{dim U}
\dim U=\dim\mathbf{P}(\mathbf{G})-1=19.
\end{equation}
Note that, for any $z=(p,[F_2],\psi\ \mathrm{mod}A_p)\in
\mathbf{P}(\mathbf{F}_2)$, we have $F_2|_{\PP\smallsetminus
\{p\}}=E_2|_{\PP\smallsetminus\{p\}}$, hence, since $\dim
\mathbf{P}(F_2)=4$, $\mathbf{P}(E_2|_{\PP\smallsetminus\{p\}}
)$ is a 4-dimensional scheme. On the other hand, by 
definition,
$$ \mathbf{P}(E_2)=p_E^{-1}(z)=\pi_E^{-1}(\{z\}
\times\PP)=\mathbf{P}(E_2|_{\PP\smallsetminus\{p\}})\cup
\pi_E^{-1}(z,p).$$
Hence, by \eqref{fibre  in P4}, for any
$z\in\mathbf{P}(\mathbf{F}_2)$, we have
\begin{equation}\label{dim fibre}
\dim\mathbf{P}(E_2)=4.
\end{equation}
This together with \eqref{codim-tau} implies that 
$\mathbf{P}(\mathbf{E}_2)\smallsetminus
\mathbf{P}(\mathbf{E}_2)_0$ has codimension 2 in $\mathbf{P}
(\mathbf{E}_2)$. Therefore, by \eqref{dim U}, $\mathbf{P}
(\mathbf{E}_2)$ is irreducible and contains $\mathbf{P}
(\mathbf{E}_2)_0$ as a dense open subset. 

Finally, remark that, by the description given in display \eqref{open tau(-1,2,4,2)}, the set $\mathcal{T}(-1,2,4,2)$ is a nonempty open subset of 
$\mathbf{P}(\mathbf{E}_2)_0$, hence it is dense in 
$\mathbf{P}(\mathbf{E}_2)$.  
\end{proof}

\section{Description of families with mixed 
singularities}\label{descr of X}

We now proceed to the description of the sets $X(-1,1,1,-1,1
)$ and $X(-1,1,1,0,1)$. Our aim is to construct explicitly 
certain open dense subsets of them, together with a universal 
family of sheaves over these subsets, which will be used in 
our further results. We start with the following lemma.
\begin{Lema}\label{two irred}
Let $G:=G(2,4)$ be the Grassmannian of lines in $\PP$ and $M=\calm(-1,1,1)\simeq\PP$ (see Remark 
\ref{R(-1,1,1)}). Consider $[F]\in M$, and let $\mathcal{E}$ 
and $E$ be the sheaves on $\PP$ fitting in the exact triples
\begin{equation}\label{triple F, calE}
0\to\mathcal{E}\to F\xrightarrow{\varepsilon}\calo_l(-1)\to0,
\end{equation}
\begin{equation}\label{triple calE,E}
0\to E\to\mathcal{E}\xrightarrow{\gamma}\calo_p\to0,
\end{equation}
for some line $l\in G$ and some point $p\in\PP$. Then 
$\mathbf{P}(\mathcal{E})$ and $\mathbf{P}(E)$ are irreducible 
generically smooth schemes of dimension 4.
\end{Lema}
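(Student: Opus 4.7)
The plan is to reduce both claims to the small birational modification argument used in the proof of Theorem \ref{T(-1,2,4,2)}. First, $\mathcal{E}$ and $E$ are torsion-free of generic rank $2$: they are subsheaves of the torsion-free reflexive sheaf $F$, and their quotients $\calo_l(-1)$ and $\calo_p$ are torsion. Dualizing the two given triples and using $\lext^1(\calo_l(-1),\op3)=\lext^1(\calo_p,\op3)=\lext^1(F,\op3)=0$ yields $\mathcal{E}^{\vee\vee}\simeq E^{\vee\vee}\simeq F$. By the explicit resolution \eqref{resR(-1,1,1)}, every $[F]\in\calr(-1,1,1)\simeq\PP$ fits in
\begin{equation*}
0\to\op3(-2)\xrightarrow{\phi_0}3\cdot\op3(-1)\xrightarrow{\phi}F\to0,
\end{equation*}
which will be the starting point of both analyses.

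For $\mathbf{P}(\mathcal{E})$: I would form the pull-back of $\varepsilon$ along $\phi$ exactly as in diagram \eqref{comm diagram0}. Setting $\lambda:=\varepsilon\circ\phi\colon 3\cdot\op3(-1)\twoheadrightarrow\calo_l(-1)$ and $\mathcal{G}:=\ker\lambda$, the snake lemma gives two exact triples
\begin{equation*}
0\to\op3(-2)\to\mathcal{G}\to\mathcal{E}\to0,\qquad 0\to\mathcal{G}\to3\cdot\op3(-1)\xrightarrow{\lambda}\calo_l(-1)\to0.
\end{equation*}
The surjection $\lambda$ embeds $\mathbf{P}(\calo_l(-1))=l$ as a $\mathbb{P}^1$-section of the $\mathbb{P}^2$-bundle $W:=\mathbf{P}(3\cdot\op3(-1))=\PP\times\mathbb{P}^2$, and exactly as in Theorem \ref{T(-1,2,4,2)} this exhibits $\mathbf{P}(\mathcal{G})$ as a small birational modification of $W$, namely the blow-up along this $\mathbb{P}^1$ followed by the contraction of the proper preimage of the $\mathbb{P}^2$-bundle $\pi_W^{-1}(l)$; hence $\mathbf{P}(\mathcal{G})$ is irreducible of dimension $5$. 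The first triple above, whose kernel is the line bundle $\op3(-2)$, then realizes $\mathbf{P}(\mathcal{E})$ as the zero locus of a section of $\calo_{\mathbf{P}(\mathcal{G})}(1)\otimes p_{\mathcal{G}}^{*}\op3(2)$, where $p_{\mathcal{G}}\colon\mathbf{P}(\mathcal{G})\to\PP$ is the composed projection. Thus $\mathbf{P}(\mathcal{E})$ is a Cartier divisor in the irreducible $5$-dimensional $\mathbf{P}(\mathcal{G})$, and every irreducible component has dimension exactly $4$ by Krull's principal ideal theorem. Over the open set $U:=\PP\setminus(l\cup\sing(F))$ the sheaf $\mathcal{E}$ coincides with $F$ and is locally free of rank $2$, so $\mathbf{P}(\mathcal{E})|_U$ is a smooth $\mathbb{P}^1$-bundle of dimension $4$, providing a generically smooth dense irreducible open subset; a fiber dimension count over $l\cup\sing(F)$ (fibers at worst $\mathbb{P}^3$, contributing loci of dimension at most $3$) rules out any extra $4$-dimensional irreducible component.

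For $\mathbf{P}(E)$: the analysis is entirely parallel. Combining the two given triples via the snake lemma yields $0\to\calo_p\to Q\to\calo_l(-1)\to 0$, where $Q:=F/E$. Setting $\lambda':=(\text{projection to }Q)\circ\phi\colon 3\cdot\op3(-1)\twoheadrightarrow Q$ and $\mathcal{G}':=\ker\lambda'$ gives a resolution $0\to\op3(-2)\to\mathcal{G}'\to E\to0$, and the same blow-up and contraction description, applied now to $\mathbf{P}(Q)\subset W$ (a disjoint union of a $\mathbb{P}^1$ and a point when $p\notin l$), exhibits $\mathbf{P}(\mathcal{G}')$ as an irreducible $5$-dimensional scheme. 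Then $\mathbf{P}(E)$ is a Cartier divisor in $\mathbf{P}(\mathcal{G}')$, and the smooth $\mathbb{P}^1$-bundle over $U':=\PP\setminus(l\cup\{p\}\cup\sing(F))$ together with the fiber dimension bound gives an irreducible, generically smooth $4$-dimensional scheme. The main obstacle I expect is the degenerate configurations in which $p\in l$, or either $p$ or $l$ meets $\sing(F)$: in such cases $Q$ and $\mathbf{P}(Q)$ become more intricate, and a direct local computation of the fiber dimensions of $\mathbf{P}(\mathcal{G}')$ and $\mathbf{P}(E)$ at those special points is needed to verify that the blow-up-contraction picture, and thus the $5$- and $4$-dimensional irreducibility statements, persist.
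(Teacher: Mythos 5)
Your overall architecture is the right one and matches the paper's in spirit: realize each projectivization as a Cartier divisor inside an irreducible $5$-dimensional ambient scheme built from a presentation, conclude that every component is $4$-dimensional, and then use a fiber-dimension count over the non-locally-free locus to see that every component meets the irreducible $\mathbb{P}^1$-bundle over the good open set. However, there are two concrete gaps. First, for $\mathbf{P}(\mathcal{E})$ you take $\mathcal{G}=\ker\bigl(3\cdot\op3(-1)\xrightarrow{\varepsilon\circ\phi}\calo_l(-1)\bigr)$ and assert that $\mathbf{P}(\mathcal{G})$ is irreducible ``exactly as in Theorem \ref{T(-1,2,4,2)}.'' That analogy does not transfer: in Theorem \ref{T(-1,2,4,2)} the quotient is a point sheaf, the modification of $W$ is genuinely small, and the locus of enlarged fibers sits over a single point. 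Here $\mathcal{G}$ fails to be locally free along the entire line $l$, its fibers over $l$ are $\mathbb{P}^3$'s, so the exceptional locus of $\mathbf{P}(\mathcal{G})\to W$ is a \emph{divisor}; one must separately rule out an irreducible component of $\mathbf{P}(\mathcal{G})$ supported over $l$ (e.g.\ by noting that locally $\mathcal{G}\cong2\cdot\calo\oplus\cali_{l}$ with $\cali_l$ generated by a regular sequence, so $\mathrm{Sym}(\mathcal{G})$ is the Rees algebra tensored with a polynomial ring, hence a domain). Your parenthetical fiber bound also needs the refinement that the $\mathbb{P}^3$-fibers of $\mathbf{P}(\mathcal{E})$ occur only over the finitely many points of $l\cap\sing(F)$, with $\mathbb{P}^2$-fibers along the rest of $l$; a uniform ``$\mathbb{P}^3$ over $l$'' would give a $4$-dimensional bad locus and the argument would collapse. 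Second, and more seriously, for $\mathbf{P}(E)$ you explicitly defer the degenerate configurations ($p\in l$, or $p$ or $l$ meeting $\sing(F)$). These cases cannot be discarded: the lemma is quantified over all $(l,p)$ admitting the epimorphisms, and it is applied (in Proposition \ref{X(-1,1,1,-1,2)}) to \emph{every} fiber of $\mathbf{P}(\mathbf{E})\to\mathbf{P}(\boldsymbol{\mathcal{E}})$, including the degenerate ones.

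The paper avoids both difficulties by first proving, via a case analysis on whether $\sing(F)\in l$, that $\mathcal{E}$ admits a genuinely locally free resolution $0\to\op3(-3)\to\op3(-2)\oplus2\cdot\op3(-1)\to\mathcal{E}\to0$. This makes $\mathbf{P}(\mathcal{E})$ a Cartier divisor in the smooth $\mathbb{P}^2$-bundle $\mathbf{P}(\op3(-2)\oplus2\cdot\op3(-1))$ directly, with no intermediate sheaf degenerating along a curve; and for $E$ the intermediate kernel is $\ker\bigl(\op3(-2)\oplus2\cdot\op3(-1)\to\calo_p\bigr)$, whose projectivization is a small modification of $W$ at the single point $p$ exactly as in Theorem \ref{T(-1,2,4,2)}, uniformly in the position of $p$ relative to $l$ and $\sing(F)$. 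To complete your proof you would either need to supply the missing local analysis of $\mathrm{Sym}(\mathcal{G}')$ at the degenerate points, or switch to the paper's two-step resolution, whose construction (push-outs using $\ext^1(\op3(-1)\oplus\op3(-2),\op3(-1))=0$, and the observation that a certain composition must be nonzero lest $\mathcal{E}$ acquire singularities along a second line) is the real technical content of the lemma.
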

\begin{proof}
We first show that $\mathcal{E}$ fits in the exact triple:
\begin{equation}\label{resoln calE}
0\to\op3(-3)\to\op3(-2)\oplus2\cdot\op3(-1)\xrightarrow{e}
\mathcal{E}\to0.
\end{equation}
Let $x_0:=\sing(F)$ (see Remark \ref{R(-1,1,1)}). 
Consider the two possible cases (a) $x_0\in l$ and (b) 
$x_0\not\in l$.\\
Case (a): $x_0\in l$. Note that from the definition of the 
sheaf $F$ it follows easily that $F$ fits in the exact triple 
$0\to\op3
(-1)\to F\xrightarrow{\delta}\cali_{l,\PP}\to0$. Since $\cali_
{l,\PP}\otimes\calo_l=N^{\vee}_{l/\PP}\simeq2\cdot\calo_l(-1)
$, it follows that there exists an epimorphism 
$\beta:\cali_{l,\PP}
\twoheadrightarrow\calo_l(-1)$ such that $\beta\circ\delta=
\varepsilon$, where $\varepsilon$ is the epimorphism in 
\eqref{triple F, calE}. Besides,$\ker\beta\simeq
\cali_{C,\PP}$, where $C$ is a nonreduced conic supported on 
$l$, and we obtain exact triples
$$
0\to\op3(-1)\to\mathcal{E}\to\cali_{C,\PP}\to0,\ \ \ \ \ \ 
0\to\op3(-3)\to\op3(-1)\oplus\op3(-2)\to\cali_{C,\PP}\to0.
$$
These two triples yield the resolution \eqref{resoln calE} by 
push-out, since $\ext^1(\op3(-1)\oplus\op3(-2),\op3(-1))=0$.\\
Case(b): $x_0\not\in l$. Note that, by Remark 
\eqref{R(-1,1,1)},
$F$ fits in the exact triple $0\to\op3(-2)\to3\cdot\op3(-1)
\xrightarrow{\alpha}F\to0$. Since clearly $\ker(\varepsilon
\circ\alpha:3\cdot\op3(-1)\twoheadrightarrow\calo_l(-1))\cong
2\cdot\op3(-1)\oplus\cali_{l,\PP}(-1)$, the last triple 
together with the triple \eqref{triple F, calE} yields the 
exact triple
$$
0\to\op3(-3)\xrightarrow{i}2\cdot\op3(-1)\oplus\cali_{l,\PP}
(-1)\to\mathcal{E}\to0.
$$
Let $c:2\cdot\op3(-1)\oplus\cali_{l,\PP}(-1)\twoheadrightarrow
\cali_{l,\PP}(-1)$ be the canonical epimorphism and consider
the composition $c\circ i:\op3(-2)\to\cali_{l,\PP}(-1)$. If 
this composition is the zero map, then $\mathrm{im}(i)\subset2
\cdot\op3(-1)$ and $\coker(i)\subset\mathcal{E}$. Since 
$\mathcal{E}$ is torsion free, it follows that $\coker(i)=
\cali_{m,\PP}$ for some line $m$ distinct from $l$, and 
$\mathcal{E}$ fits in the exact triple $0\to\cali_{m,\PP}
\to\mathcal{E}\to\cali_{l,\PP}(-1)\to0$. This triple implies 
that $m\subset\sing(\mathcal{E})$, contrary to the 
evident equality $\sing(\mathcal{E})=x_0\sqcup l$. 
Hence the composition $c\circ i$ is a nonzero morphism, so 
that $\coker(c\circ i)\cong\calo_{\mathbb{P}^2}(-2)$ for some 
projective plane $\mathbb{P}^2$ in $\PP$. We thus obtain an 
exact triple $0\to2\cdot\op3(-1)\to\mathcal{E}\to
\calo_{\mathbb{P}^2}(-2)\to0$. This triple and and the exact 
triple $0\to\op3(-3)\to\op3(-2)\to\calo_{\mathbb{P}^2}(-2)
\to0$ by push-out yield \eqref{resoln calE}, since 
$\ext^1(\op3(-2),2\cdot\op3(-1))=0$. 

Now from \eqref{resoln calE} it follows that $\mathbf{P}
(\mathcal{E})$ is a Cartier divisor in $W:=\mathbf{P}(\op3(-2)
\oplus2\cdot\op3(-1)$, and the same argument as in the proof 
of Theorem \ref{T(-1,2,4,2)} shows that $\mathbf{P}(\mathcal{E}
)$ is irreducible. Next, the triples \eqref{triple calE,E} 
and \eqref{resoln calE} yield exact triples
$$
0\to\cdot\op3(-3)\to\mathcal{G}\to E\to0,\ \ \ \ \ \ 
 0\to\mathcal{G}\to\op3(-2)\oplus2\cdot\op3(-1)
\xrightarrow{\gamma\circ e}\calo_p\to0.
$$
The second triple here shows that $\mathbf{\mathcal{G}}$ is 
irreducible as a small birational modification of the scheme 
$W$ defined above, hence it is irreducible. On the other 
hand, the first triple shows that $\mathbf{P}(E)$ is a 
Cartier divisor in $\mathbf{\mathcal{G}}$, and again the same 
argument as in the proof of Theorem \ref{T(-1,2,4,2)} yields 
the irreducibility of $\mathbf{P}(E)$. \end{proof}

Now, let  and  $\Gamma=\{(x,l)\in\PP\times G\ |\ x\in l\}$ 
the graph of incidence, and
$\calo_{\PP\times M}$-sheaf (see Remark \ref{R(-1,1,1)}). 
for $l\in G$ denote $A_l:=\mathrm{Aut}(\calo_l(-1)),\ 
A'_l:=\mathrm{Aut}(\calo_l),\ A_l\simeq\mathbf{k}^*\simeq 
A'_l$. Define the sets
\begin{equation}\label{B}
B:=\{(l,[F],\epsilon\ \mathrm{mod}A_l))\ |\ (l,[F])\in 
G\times M,\ \epsilon:F\to\calo_l(-1)\ \mathrm{is\ an\ 
epimorphism}\}.
\end{equation}
\begin{equation}\label{B'}
B':=\{(l,[F],\epsilon'\ \mathrm{mod}A'_l))\ |\ (l,[F])\in G
\times M,\ \epsilon':F\to\calo_l\ \mathrm{is\ an\ 
epimorphism}\}.
\end{equation}
We have the following proposition.

\begin{Prop}\label{descriptn of B,B'} The following claims 
are true.
\begin{itemize}
\item[(i)] $B$, respectively, $B'$ is the set of closed points of an 
irreducible scheme of dimension 7, respectively, of dimension 9.
\item[(ii)] There is an $\calo_{\PP\times B}$-sheaf $\boldsymbol
{\mathcal{E}}$ and an invertible 
$\calo_{\mathbf{\Gamma}}$-sheaf 
$\mathbf{L}$ fitting in the exact triple
$0\to\boldsymbol{\mathcal{E}}\to\mathbf{F}_B
\xrightarrow{\varepsilon}\mathbf{L}\to0$, where
$\mathbf{F}_B=\mathbf{F}\underset{\calo_M}{\otimes}\calo_B$ 
and $\mathbf{\Gamma}=\Gamma\times_MB$.
Respectively, there is an $\calo_{\PP\times B'}$-sheaf 
$\boldsymbol{\mathcal{E'}}$ and an invertible $\calo_{\mathbf
{\Gamma'}}$-sheaf $\mathbf{L'}$ fitting in the exact triple
$0\to\boldsymbol{\mathcal{E'}}\to\mathbf{F}_{B'}
\xrightarrow{\varepsilon}\mathbf{L'}\to0$, where
$\mathbf{F}_{B'}=\mathbf{F}\underset{\calo_M}{\otimes}\calo_{B
'}$ and $\mathbf{\Gamma'}=\Gamma\times_M{B'}$. These triples, 
being restricted onto $\PP\times\{b\}$, respectively, onto 
$\PP\times\{b'\}$ for any points $b=(l,[F],\epsilon\ 
\mathrm{mod}A_l)\in B$, $b'=(l,[F],\epsilon'\ 
\mathrm{mod}A'_l)\in B$, yield: 
\begin{equation}\label{triple with cal E}
0\to\mathcal{E}_b\xrightarrow{\iota}F\xrightarrow{\epsilon}
\calo_l(-1)\to0,\ \ \ \ \ \ 
\mathcal{E}_b\cong\boldsymbol{\mathcal{E}}|_{\PP\times\{b\}}.
\end{equation}
\begin{equation}\label{triple with cal E'}
0\to\mathcal{E'}_{b'}\xrightarrow{\iota'}F\xrightarrow
{\epsilon'}\calo_l\to0,\ \ \ \ \ \ 
\mathcal{E'}_{b'}\cong\boldsymbol
{\mathcal{E'}}|_{\PP\times\{b'\}}.
\end{equation}
\item[(iii)] $\mathbf{P}(\boldsymbol{\mathcal{E}})$, respectively, 
$\mathbf{P}(\boldsymbol{\mathcal{E'}})$ is an irreducible 
generically smooth scheme of dimension 11, respectively, of 
dimension 13.
\end{itemize}
\end{Prop}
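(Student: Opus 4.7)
\emph{Proof plan.} For item (i), the plan is to realize $B$ and $B'$ as open subschemes of a relative Quot scheme over $G \times M$ and compute fiber dimensions of the natural projection $B \to G \times M$ via adjunction and the splitting \eqref{splitingF}. Setting $x_0 := \sing(F)$, adjunction along $l \hookrightarrow \PP$ gives $\Hom(F,\calo_l(-1)) \simeq \Hom_l(F|_l,\calo_l(-1))$: over the dense open $U := \{(l,[F]) \in G \times M : x_0 \notin l\}$ (of dimension $4+3=7$), $F|_l \simeq \calo_l(-1) \oplus \calo_l$ yields $\Hom(F,\calo_l(-1)) = \mathbf{k}$, the unique nonzero map being automatically surjective; over the codimension-$2$ closed stratum $\{x_0 \in l\}$, $F|_l \simeq \calo_{x_0} \oplus 2\cdot\calo_l(-1)$ gives $\Hom(F,\calo_l(-1)) = \mathbf{k}^2$, with epimorphisms forming an open subset. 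Thus $B \to G \times M$ is set-theoretically bijective onto $U$ (yielding an irreducible dense stratum of dimension $7$), while the fiber over the closed stratum is an open in $\mathbb{P}^1$ (contributing a locus of dimension at most $5+1=6$). Together with the Quot scheme structure this forces $B$ irreducible of dimension $7$. An entirely parallel analysis for $B'$ yields $\Hom(F,\calo_l) = \mathbf{k}^3$ over $U$ (generic fiber an open in $\mathbb{P}^2$) and $\mathbf{k}^4$ on the complement, whence $\dim B' = 7+2=9$.

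For item (ii), the Quot scheme interpretation of $B$ carries a universal quotient $\boldsymbol{\varepsilon} : \mathbf{F}_B \twoheadrightarrow \mathbf{L}$, with $\mathbf{L}$ an invertible $\calo_{\mathbf{\Gamma}}$-sheaf on the universal incidence $\mathbf{\Gamma} = \Gamma \times_M B$; setting $\boldsymbol{\mathcal{E}} := \ker(\boldsymbol{\varepsilon})$ produces the desired universal family on $\PP \times B$, and the fiberwise triple \eqref{triple with cal E} is obtained by base change to $\PP \times \{b\}$. The analogous construction on $B'$ yields $\boldsymbol{\mathcal{E}}'$, $\mathbf{L}'$, and the triple \eqref{triple with cal E'}.

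For item (iii), the projection $p : \mathbf{P}(\boldsymbol{\mathcal{E}}) \to B$ is proper with fiber $p^{-1}(b) = \mathbf{P}(\mathcal{E}_b)$, which by Lemma \ref{two irred} is an irreducible scheme of dimension $4$ for every $b \in B$. Since $B$ is irreducible of dimension $7$ and all fibers are irreducible and equidimensional, $\mathbf{P}(\boldsymbol{\mathcal{E}})$ is irreducible of dimension $11$; generic smoothness follows from the fact that on the dense open locus of $\PP \times B$ where $\boldsymbol{\mathcal{E}}$ is locally free of rank $2$, $\mathbf{P}(\boldsymbol{\mathcal{E}})$ is a $\mathbb{P}^1$-bundle. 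For $\mathbf{P}(\boldsymbol{\mathcal{E}}')$ one needs an analog of Lemma \ref{two irred} for the sheaves $\mathcal{E}'_{b'}$ from \eqref{triple with cal E'}: combining \eqref{triple with cal E'} with the resolution \eqref{resR(-1,1,1)} of $F$ yields a three-term locally free resolution of $\mathcal{E}'_{b'}$ parallel to \eqref{resoln calE}, and the small birational modification argument of Lemma \ref{two irred} then gives $\mathbf{P}(\mathcal{E}'_{b'})$ irreducible of dimension $4$; thus $\mathbf{P}(\boldsymbol{\mathcal{E}}')$ is irreducible of dimension $13$. The main obstacle will be twofold: the closure argument in item (i), verifying that the codimension-$2$ stratum of $B$ (resp. $B'$) does not split off as a separate irreducible component but lies in the closure of the generic stratum, and carrying through the fiberwise analog of Lemma \ref{two irred} for $\mathcal{E}'_{b'}$ needed in item (iii).
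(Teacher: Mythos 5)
Your dimension counts are correct, and items (ii)--(iii) of your plan are essentially sound; for (iii) you are in fact more careful than the paper, which merely asserts that the analogue of Lemma \ref{two irred} for $\mathcal{E}'_{b'}=\ker(F\to\calo_l)$ is ``completely similar'', whereas your proposal to combine \eqref{triple with cal E'} with \eqref{resR(-1,1,1)} into a three-term resolution parallel to \eqref{resoln calE} is the right way to make that precise. The genuine gap is the irreducibility of $B$ and $B'$ in item (i), which you correctly flag as the main obstacle but do not resolve. Your stratification of $G\times M$ by whether $\sing(F)\in l$ produces a $7$-dimensional stratum of $B$ and a $6$-dimensional one over the codimension-$2$ locus (resp.\ $9$ and $8$ for $B'$), and nothing in ``the Quot scheme structure'' prevents the smaller stratum from being a separate irreducible component: relative Quot schemes are routinely reducible, and openness of $B$ inside one gives no lower bound on the dimension of its components. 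Since irreducibility of $B$ and $B'$ is exactly the input needed for item (iii) (irreducible base with irreducible equidimensional fibers), the plan as written does not yet prove the proposition.

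The paper closes this gap by a different device. Working fiberwise over $M$ with $y=\sing(F)$ and using the resolution $0\to\op3(-2)\to3\cdot\op3(-1)\to F\to0$, it realizes $\mathbf{P}(F)$ as a divisor of class $\op3(1)\boxtimes\calo_{\mathbb{P}^2}(1)$ in $\PP\times\mathbb{P}^2$ and identifies $B_y$ with the zero scheme of the induced section $\tilde{s}$ of the rank-$2$ bundle $\calo_{\mathbb{P}^2}(1)\boxtimes\mathcal{Q}$ on $\mathbb{P}^2\times G$, which it then recognizes as the blow-up of $G$ along $Z_y=\{l\in G\mid y\in l\}\simeq\mathbb{P}^2$; similarly $B'_y$ is the zero scheme of a regular section of a rank-$3$ bundle. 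This is precisely what delivers irreducibility: every component of such a zero scheme has dimension at least $\dim(\mathbb{P}^2\times G)-2=4$, so the $3$-dimensional locus sitting over $Z_y$ cannot be a component and must lie in the closure of the locus over $G\smallsetminus Z_y$, which is irreducible because it maps bijectively onto it. If you wish to keep your stratification approach, you must supply an equivalent specialization argument --- for instance, exhibit for each epimorphism $F\twoheadrightarrow\calo_{l_0}(-1)$ with $\sing(F)\in l_0$ a one-parameter family of lines $l_t\not\ni\sing(F)$ and epimorphisms degenerating to it --- since this is not automatic from the fiber dimensions alone.
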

\begin{proof}
It is enough to argue fiberwise over $M$, i.e. for a fixed sheaf
$[F]\in M$. Let $y=\sing(F)$ and consider the sets
$B_y=B\times_M\{y\}$ and $B'_y=B'\times_M\{y\}$.
Any points $b=(l,[F],\epsilon)\in B_y$ and $b'=(l,[F],\epsilon')
\in B'_y$ define the exact triples \eqref{triple with cal E} 
and 
\eqref{triple with cal E'} with $\mathcal{E}_b=\ker(\epsilon)$
and $\mathcal{E'}_{b'}=\ker(\epsilon')$, respectively. These 
triples, together with the exact triple $0\to\op3(-2)\to3\cdot
\op3(-1)\xrightarrow{\delta} F\to0$ from Remark 
\ref{R(-1,1,1)}, 
yield commutative diagrams with $\mathcal{G}=\ker(\epsilon\circ
\delta)$ and $\mathcal{G'}=\ker(\epsilon'\circ\delta)$, 
respectively:
\begin{equation}\label{comm diagram1}
\xymatrix{\op3(-2)\ar@{=}[d]\ar@{>->}[r] & \mathcal{G}
\ar@{>->}[d]\ar@{->>}[r] & \mathcal{E}_b\ar@{>->}[d] \\
\op3(-2) \ar@{>->}[r][r] & 3\cdot\op3(-1)\ar@{->>}[r]^-{\delta} 
\ar@{->>}[d]^{\epsilon\circ\delta} & F \ar@{->>}[d]^{\epsilon}\\
&  \calo_l(-1) \ar@{=}[r] & \mathcal{O}_l(-1),  &   }
\xymatrix{\op3(-2)\ar@{=}[d]\ar@{>->}[r] & \mathcal{G'}
\ar@{>->}[d]\ar@{->>}[r] & \mathcal{E'}_{b'}\ar@{>->}[d] \\
\op3(-2) \ar@{>->}[r][r] & 3\cdot\op3(-1)\ar@{->>}[r]^-{\delta} 
\ar@{->>}[d]^{\epsilon'\circ\delta} & 
F\ar@{->>}[d]^{\epsilon'}\\
&  \calo_l\ar@{=}[r] & \mathcal{O}_l.  &   }
\end{equation}
Consider the scheme $\Pi:=\mathbf{P}(3\cdot\op3(-1))\cong\PP
\times \mathbb{P}^2$. To the epimorphism $\epsilon\circ\delta$ 
in the left diagram \eqref{comm diagram1} there corresponds an 
injective morphism $i:\mathbf{P}(\calo_l(-1))\hookrightarrow 
\Pi$ which defines a point $x\in\mathbb{P}^2$ such that 
$\mathrm{im}(i)=l_x:=\{x\}\times l$.
Respectively, to the epimorphism $\epsilon'\circ\delta$ in the 
right diagram \eqref{comm diagram1} there corresponds an 
injective morphism $i':\mathbf{P}(\calo_l)\hookrightarrow \Pi$ 
which defines a point $x'\in(\mathbb{P}_l)_e$, where 
$(\mathbb{P}_l)_e$ is the set of epimorphisms $3\cdot\op3(-1)
\twoheadrightarrow\calo_l\ \mathrm{mod}A'_l$
considered as a dense open subset of the projective  5-space 
$\mathbb{P}(\Hom(3\cdot\op3(-1),\calo_l))$. For this point 
$x'$, we denote $l_{x'}:=\mathrm{im}(i')$.
Besides, to the epimorphism $\delta$ in both diagrams there 
corresponds an injective morphism $i_{\delta}:\mathbf{P}(F)
\hookrightarrow\Pi$. From now on we will identify 
$\mathbf{P}(F)$ with its image under $i_{\delta}$. Now by
\eqref{comm diagram1} the condition $b\in B_y$ and the 
condition $b'\in B'_y$, yield the inclusions
\begin{equation}\label{condn on x}
l_x\subset\mathbf{P}(F),\ \ \ {\mathrm{respectively,}}\ \ \ 
l_{x'}\subset\mathbf{P}(F).
\end{equation} 
Next, by the middle horizontal triple in diagrams \eqref{comm 
diagram1}, $\mathbf{P}(F)$ is a Cartier divisor on $\Pi$ such 
that $\calo_{\Pi}(\mathbf{P}(F))\cong\op3(2)\boxtimes
\calo_{\mathbb{P}^2}\otimes\calo_{\Pi}(1)\cong\op3(1)\boxtimes
\calo_{\mathbb{P}^2}(1))$. Hence
\begin{equation}\label{section s}
\mathbf{P}(F)=(s)_0,\ \ \ \ 0\ne s\in\h^0(\op3(1)\boxtimes
\calo_{\mathbb{P}^2}(1)),
\end{equation}
and the conditons \eqref{condn on x} mean that $s|_{l_x}=0$, 
respectively, $s|_{l_{x'}}=0$. 

Consider the first of these conditions  $s|_{l_x}=0$.
Let $\Pi=\PP\times\mathbb{P}^2\xleftarrow{p}\Gamma\times\mathbb
{P}^2\xrightarrow{q}G\times\mathbb{P}^2$ be the projections. 
Then by construction the sheaf $q_*p^*(\op3(1)\boxtimes
\calo_{\mathbb{P}^2}(1))$ is isomorphic to the sheaf 
$\mathcal{A}=\calo_{\mathbb{P}^2}(1)\boxtimes\mathcal{Q}$, 
where 
$\mathcal{Q}$ is the universal quotient rank 2 bundle on $G$.
In addition, under the natural isomorphism of spaces of 
sections 
$\h^0(\calo_{\op3(1)\boxtimes\mathbb{P}^2}(1))\cong\h^0
(\mathcal{A})$, the section $s$ from \eqref{section s} 
corresponds to the section $\tilde{s}\in\h^0(\mathcal{A})$.
The above condition $s|_{l_x}=0$ then means that the section 
$\tilde{s}$ vanishes at the point $(l,x)\in 
G\times\mathbb{P}^2$.
On the other hand, by the universal property of $\mathbf{P}(F)$ 
(see \cite[Ch. II, Prop. 7.12]{H}) it follows that to give an 
epimorphism $\epsilon:F\twoheadrightarrow\calo_l(-1))\ 
\mathrm{mod}A_l$ is equivalent to give an embedding 
$l_x\hookrightarrow\mathbf{P}(F)$ in \eqref{condn on x}. 
This together with 
the condition $(l,x)\in(\tilde{s})_0$ yields a natural 
isomorphism of schemes 
\begin{equation}\label{By=}
B_y\simeq(\tilde{s})_0.
\end{equation}
Under this isomorphism the fiber of the projection 
$B_y\simeq(\tilde{s})_0\to G,\ (l,x)\mapsto l$ is naturally 
identified with $\mathbb{P}(\Hom(F,\calo_l(-1)))$. By
\eqref{splitingF} this projective space is a point if
$l\not\in Z_y:=\{l\in G\ |\ y\in l\}$, respectively, is
$\mathbb{P}^1$ if $l\in Z_y$. This together with the universal
property of blowing ups \cite[Ch. II, Prop. 7.14]{H} implies
that $B_y$ is isomorphic to the blow-up of $G$ along the smooth
center $Z_y\simeq\mathbb{P}^2$. In particular, $B_y$ is 
irreducible, of dimension 4. Hence $B$ is irreducible of 
dimension 7. 

Now proceed to the second  condition $s|_{l_{x'}}=0$. For this,
consider the scheme $G'=\{(l,x')|\ l\in G,\ x'\in(\mathbb{P}_l)
_e\}$ with the projection $\psi:G'\to G,\ (l,x')\mapsto l$, 
and the graph of incidence $\Gamma'=\{(z,l,x')\in\Pi\times G'|\ 
z\in l_{x'}\}$ with the projections $\Pi\xleftarrow{p'}\Gamma'
\xrightarrow{q'}G'$. One checks that $\calo_{\Pi}(\mathbf{P}(F))
|_{l_x}\cong\calo_{\mathbb{P}^1}(2)$. This implies that, 
applying
the functor $q'_*{p'}^*$ to the section $s$ from \eqref{section 
s} we obtain the section $\tilde{s}'\in\h^0(\psi^*S^2\mathcal{Q}
\otimes\mathcal{D})$ for some invertible $\calo_{G'}$-sheaf 
$\mathcal{D}$ such that the condition $s|_{l_{x'}}=0$ is 
equivalent to the condition  $(l,x')\in(\tilde{s}')_0$. This 
similarly to \eqref{By=} yields $B'_y\simeq(\tilde{s}')_0.$ 

Under this isomorphism the fiber of the projection 
$\psi|_{B'_y}:B'_y\simeq(\tilde{s}')_0\to G,\ (l,x')\mapsto l$ 
is naturally identified with $\mathbb{P}(\Hom(F,\calo_l))$. By
\eqref{splitingF} this projective space is $\mathbb{P}^2$ if
$l\not\in Z_y$, respectively, is $\PP$ if $l\in Z_y$. This 
implies that $\tilde{s}'$ as a section of a rank 3 vector 
bundle 
is regular, and its zero locus $B'_y$ is irreducible. Hence 
$B'$ is irreducible of dimension 9. We thus have proved the 
statement (i) of Lemma. 

The statement (ii) is clear. To prove the statement (iii), it is
also enough to argue fiberwise over $M$. For the above point
$b\in B_y$, we have to prove the irreducibility and generic 
smoothness of the scheme $\mathbf{P}(\mathcal{E}_b)$. This is 
just the statement of Lemma \eqref{two irred} in which we set 
$\mathcal{E}=\mathcal{E}_b$. The irreducibility and generic 
smoothness of $\mathbf{P}(\mathcal{E}'_{b'})$ for $b'\in B'_y$
is completetly similar.
\end{proof}

Let $\rho:\mathbf{P}(\boldsymbol{\mathcal{E}})\to\PP\times B$ 
be 
the structure morphism, and consider the compositions $\theta=
pr_1\circ\rho:\mathbf{P}(\boldsymbol{\mathcal{E}})\to\PP$ and
$\tau=pr_2\circ\rho:\mathbf{P}(\boldsymbol{\mathcal{E}})\to B$. 
Set $\boldsymbol{\mathcal{\tilde{E}}}:=(\mathrm{id}_{\PP}\times
\tau)^*\boldsymbol{\mathcal{E}}=\boldsymbol{\mathcal{E}}
\otimes_{\calo_B}\calo_{\mathbf{P}(\boldsymbol{\mathcal{E}})}$ 
and consider the "diagonal" embedding 
$j:\mathbf{P}(\boldsymbol{\mathcal{E}})\hookrightarrow\PP\times
\mathbf{P}(\boldsymbol{\mathcal{E}}),\ z\mapsto(\theta(z),z)$. 
By construction, $j^*\boldsymbol{\mathcal{\tilde{E}}}=\rho^*
\boldsymbol{\mathcal{E}}$ and we obtain the composition of 
surjections
$\mathbf{e}:\ \boldsymbol{\mathcal{\tilde{E}}}\twoheadrightarrow
j_*j^*\boldsymbol{\mathcal{\tilde{E}}}=j_*\rho^*
\boldsymbol{\mathcal{E}}\twoheadrightarrow j_*
\calo_{\mathbf{P}(\boldsymbol{\mathcal{E}})}(1)$ which yields an
exact $\calo_{\PP\times\mathbf{P}(\boldsymbol{\mathcal{E}})}$-
triple, where $\mathbf{E}:=\ker\mathbf{e}$:
\begin{equation}\label{bold E}
0\to\mathbf{E}\to\boldsymbol{\mathcal{\tilde{E}}}
\xrightarrow{\mathbf{e}}j_*\calo_{\mathbf{P}(\boldsymbol
{\mathcal{E}})}(1)\to0.
\end{equation}
In a similar way we define the morphisms $\rho':\mathbf{P}
(\boldsymbol{\mathcal{E'}})\to\PP\times B'$, $\theta'=
pr_1\circ\rho':\mathbf{P}(\boldsymbol{\mathcal{E}'})\to\PP$,
$j':\mathbf{P}(\boldsymbol{\mathcal{E'}})\hookrightarrow\PP
\times
\mathbf{P}(\boldsymbol{\mathcal{E'}}),\ z\mapsto(\theta'(z),z)$,
the sheaf $\boldsymbol{\mathcal{\tilde{E}'}}:=\boldsymbol
{\mathcal{E}'}\otimes_{\calo_{B'}}\calo_{\mathbf{P}(\boldsymbol
{\mathcal{E}'})}$, and the surjection $\mathbf{e'}:\ \boldsymbol
{\mathcal{\tilde{E}'}}\twoheadrightarrow j'_*\calo_{\mathbf{P}
(\boldsymbol{\mathcal{E}'})}(1)$ which yields an
exact $\calo_{\PP\times\mathbf{P}(\boldsymbol{\mathcal{E}'})}$-
triple, where $\mathbf{E'}:=\ker\mathbf{e'}$:
\begin{equation}\label{bold E'}
0\to\mathbf{E'}\to\boldsymbol{\mathcal{\tilde{E}'}}\xrightarrow
{\mathbf{e'}}j'_*\calo_{\mathbf{P}(\boldsymbol{\mathcal{E}'})}
(1)\to0.
\end{equation}

Below we will also consider extensions of $\op3$-sheaves of the 
form
\begin{equation}\label{extn Q}
0\to\calo_q\to Q\xrightarrow{\gamma}i_*\calo_l(-1)\to0,\ \ \ \ 
\ \end{equation}
\begin{equation}\label{2extn Q}
0\to\calo_q\to Q\xrightarrow{\gamma}i_*\calo_l\to0,\ \ \ \ \ 
\end{equation}
where $(q,l)\in\PP\times G$ and $i:l\hookrightarrow\PP$ is the 
embedding. Below we also set $A_Q:=\mathrm{Aut}(Q)$ for $Q$ in 
\eqref{extn Q} and \eqref{2extn Q}.
\begin{Prop}\label{P(E)=X} The following are true.
\begin{itemize}
\item[(i)] There are isomorphisms of schemes $\Phi:\mathbf{P}
(\boldsymbol{\mathcal{E}})\xrightarrow{\simeq}X$ and $\Phi':
\mathbf{P}(\boldsymbol{\mathcal{E}'})\xrightarrow{\simeq}X'$, 
where
\begin{equation}\label{X}
X=\{([F],Q,\delta\ \mathrm{mod}A_Q)|\ [F]\in M,\ Q\ 
\mathrm{fits\ in}\ \eqref{extn Q},\ \delta:F\to Q\ \mathrm{is\ 
surjective}\}.
\end{equation}
\begin{equation}\label{'}
X'=\{([F],Q,\delta\ \mathrm{mod}A_Q)|\ [F]\in M,\ Q\ 
\mathrm{fits\ in}\ \eqref{2extn Q},\ \delta:F\to Q\ \mathrm{is\ 
surjective}\}.
\end{equation}
\item[(ii)] There are inclusions of dense open subschemes
$$ \mathcal{X}(-1,1,1,-1,1)\hookrightarrow\mathbf{P}(\boldsymbol
{\mathcal{E}}) ~~ {\it and} ~~ \mathcal{X}(-1,1,1,,1)\hookrightarrow
\mathbf{P}(\boldsymbol{\mathcal{E'}}). $$
The modular morphisms
$$ f:\mathbf{P}(\boldsymbol{\mathcal{E}})\to
\calm(-1,2,2),\ z\mapsto[\mathbf{E}|_{\PP\times\{z\}}]  ~~ {\it and} ~~
f':\mathbf{P}(\boldsymbol{\mathcal{E'}})\to
\calm(-1,2,0),\ z\mapsto[\mathbf{E'}|_{\PP\times\{z\}}] $$
are injective, and the closures of their images are $\mathrm{X}
(-1,1,1,-1,1)$ and $\mathrm{X}(-1,1,1,0,1)$,
respectively.
\end{itemize}
\end{Prop}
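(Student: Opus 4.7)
The overall approach for part (i) is to realize the isomorphism $\Phi$ via a pushout construction, and for part (ii) to translate the defining conditions of $\mathcal{X}(-1,1,1,-1,1)$ into open conditions on $\mathbf{P}(\boldsymbol{\mathcal{E}})$, after which the injectivity of $f$ and the closure description will follow by extracting $F$ and $Q$ from $E$ via double dualization. Throughout I will treat only the unprimed case; the primed case is strictly parallel after replacing $\calo_l(-1)$ by $\calo_l$, $B$ by $B'$, and the parameter $r=-1$ by $r=0$.

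First I would construct $\Phi$ on closed points. By the functorial property of projective spectra \cite[Ch.~II, Prop.~7.12]{H}, a closed point of $\mathbf{P}(\boldsymbol{\mathcal{E}})$ is a triple $(q, b, [\varphi])$ with $b = (l, [F], [\epsilon]) \in B$ and $\varphi:\mathcal{E}_b \twoheadrightarrow \calo_q$ an epimorphism modulo $A_q$, where $\mathcal{E}_b$ fits in \eqref{triple with cal E}. Pushing out \eqref{triple with cal E} along $\varphi$ produces an extension of shape \eqref{extn Q} together with a surjection $\delta : F \twoheadrightarrow Q$, and I set $\Phi(q, b, [\varphi]) := ([F], Q, [\delta\ \mathrm{mod}\ A_Q])$. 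The inverse is obtained by composing $\delta$ with the canonical projection $Q \twoheadrightarrow i_*\calo_l(-1)$ to recover $\epsilon$, taking the kernel of $\epsilon$ to recover $\mathcal{E}_b$, and restricting $\delta$ to $\mathcal{E}_b$ to recover $\varphi$. These are mutually inverse on closed points, and they can be promoted to a scheme isomorphism by globalizing the pushout: applying it to the universal triple of Proposition \ref{descriptn of B,B'}(ii) pulled back along $\mathbf{P}(\boldsymbol{\mathcal{E}}) \to B$, together with the universal surjection \eqref{bold E}, yields a universal pair $(\mathbf{Q}, \boldsymbol{\delta})$ on $\PP \times \mathbf{P}(\boldsymbol{\mathcal{E}})$ that represents the functor implicit in the definition of $X$.

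For part (ii), the plan is to identify $\mathcal{X}(-1,1,1,-1,1)$ under $\Phi$ with the open subscheme $U \subset \mathbf{P}(\boldsymbol{\mathcal{E}})$ cut out by the open conditions (a) $q \notin l$; (b) $(l \cup \{q\}) \cap \sing(F) = \emptyset$; (c) $F|_l \cong \calo_l(-1) \oplus \calo_l$. The crucial observation is that when $q \notin l$, the sheaves $\calo_q$ and $i_*\calo_l(-1)$ have disjoint support, so $\ext^1(i_*\calo_l(-1), \calo_q) = 0$ and the pushout $Q$ is forced to split as $\calo_q \oplus i_*\calo_l(-1) = Q_{(l,\{q\}),-1}$. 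Comparison with the definition \eqref{familyX} of $\tilde{\mathcal{X}}(-1,1,1,-1,1)$ then gives the desired identification, and density of $U$ follows from the irreducibility of $\mathbf{P}(\boldsymbol{\mathcal{E}})$ established in Proposition \ref{descriptn of B,B'}(iii) together with the obvious non-emptiness of $U$.

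Finally I would prove injectivity of $f$ and compute its closed image. For $z \in \mathbf{P}(\boldsymbol{\mathcal{E}})$ the restriction $E := \mathbf{E}|_{\{z\} \times \PP}$ sits in $0 \to E \to F \to Q \to 0$ with $F$ reflexive and $\dim Q \leq 1$; a standard double-dual argument using that $F/E = Q$ has codimension at least two then yields $E^{\vee\vee} \simeq F$ and $E^{\vee\vee}/E \simeq Q$. The data $([F], Q, [\delta])$ is thereby recovered from $[E]$: take $F = E^{\vee\vee}$ and $Q = E^{\vee\vee}/E$, extract $q$ as the support of the maximal 0-dimensional subsheaf of $Q$, extract $l$ as the support of the pure 1-dimensional quotient $Q/\calo_q$, and let $[\delta]$ be the canonical quotient map modulo $A_Q$. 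Since $f$ agrees on the dense open $\mathcal{X}(-1,1,1,-1,1)$ with the modular embedding $\Psi$ of Section \ref{New Irreducible Components}, the closure of $f(\mathbf{P}(\boldsymbol{\mathcal{E}}))$ in $\calm(-1,2,2)$ equals $\overline{\Psi(\mathcal{X}(-1,1,1,-1,1))} = \mathrm{X}(-1,1,1,-1,1)$ by the definition of the latter. I expect the main obstacle to be the rigorous globalization upgrading the pointwise $\Phi$ to a scheme isomorphism, and the uniform identification $E^{\vee\vee} \simeq F$ at all points of $\mathbf{P}(\boldsymbol{\mathcal{E}})$ rather than only on the generic locus $\mathcal{X}$, since at non-generic points---where $q \in l$ or $q \in \sing F$---the vanishings underpinning the double-dual argument require a more delicate check.
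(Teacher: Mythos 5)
Your proposal matches the paper's proof in all essentials: part (i) is established by the same correspondence between epimorphisms $\mathcal{E}_b\twoheadrightarrow\calo_q$ and quotients $F\twoheadrightarrow Q$ (your pushout is exactly the paper's cokernel of $E_z\hookrightarrow F=E_z^{\vee\vee}$), with the same explicit inverse obtained by factoring $Q$ through its maximal artinian subsheaf, and part (ii) follows as in the paper by identifying $\mathcal{X}(-1,1,1,-1,1)$ with the open locus where $q\notin l$ and $(l\cup\{q\})\cap\sing(F)=\emptyset$ and invoking the irreducibility of $\mathbf{P}(\boldsymbol{\mathcal{E}})$ for density. Your additional remarks (the splitting of $Q$ when the supports are disjoint, and $E^{\vee\vee}\simeq F$ at every point because $F$ is reflexive and $Q$ is supported in codimension at least $2$) are correct and merely make explicit what the paper leaves implicit.
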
 
\begin{proof}
(i) It is enough to consider $\mathbf{P}(\boldsymbol{\mathcal
{E}})$, since the argument with $\mathbf{P}(\boldsymbol
{\mathcal{E'}})$ is similar.
For any point $z\in\mathbf{P}(\boldsymbol{\mathcal{E}})$ let
$(q,b)=\rho(z)$. By definition the triple \eqref{bold E}
resricted onto $\PP\times\{z\}$ is the triple 
\begin{equation}
0\to E_z\to\mathcal{E}_b\xrightarrow{e_z}\calo_q\to0.
\end{equation}
On the other hand, by Proposition \ref{descriptn of B,B'}.(ii), 
$b=(l,[F],\epsilon\ \mathrm{mod}A_l)$ and
$\mathcal{E}_b$ fits in the triple \eqref{triple with cal E} in
which $F=\mathcal{E}_b^{\vee\vee}$, $\iota:\mathcal{E}_b\to
\mathcal{E}_b^{\vee\vee}$ is the canonical morphism and 
$\epsilon:\mathcal{E}_b^{\vee\vee}\twoheadrightarrow\calo_l(-1)$
is the quotient morphism. Since $\mathcal{E}_b^{\vee\vee}=
E_z^{\vee\vee}$, the composition $\tau:E_z\to\mathcal{E}_b
\xrightarrow{\iota}E_z^{\vee\vee}$ is the canonical morphism
of the sheaf $E_z$ into its reflexive hull, and $Q:=\coker(i)$ 
fits in the triple \eqref{extn Q}. We thus have an exact triple
\begin{equation}
0\to E_z\xrightarrow{\tau}E_z^{\vee\vee}\xrightarrow{\delta}Q
\to0,
\end{equation} 
where $\delta$ is the quotient morphism. This defines a morphism
$$ \Phi:\mathbf{P}(\boldsymbol{\mathcal{E}})\xrightarrow{\simeq}X,
\ z\mapsto([E_z^{\vee\vee}],Q=E_z^{\vee\vee}/E_z,\delta\ 
\mathrm{mod}A_Q).$$ 
To construct the inverse morphism 
$\Phi^{-1}$, take a point $x=([F],Q,\delta\ \mathrm{mod}A_Q)$ and set
$\mathcal{E}=\ker(\gamma\circ\delta:F\twoheadrightarrow
\calo_l(-1))$, where $\gamma$ in \eqref{extn Q} is the morphism 
of factorization of $Q$ by its maximal artinian subsheaf 
$\calo_q$. 
We thus obtain the induced epimorphism $e:\mathcal{E}
\twoheadrightarrow\ker(\gamma)=\calo_q$, hence a
point $[e]=e\ \mathrm{mod}A_q\in\mathbf{P}(\boldsymbol
{\mathcal{E}})$. This yields the desired morphism $\Phi^{-1}:
\ X\xrightarrow{\simeq}\mathbf{P}(\boldsymbol{\mathcal{E}}),
\ x\mapsto[e]$.\\
(ii) The injectivity of the modular morphism $f$ is clear from 
the above. In addition, under the description \eqref{X}, the 
scheme $\mathcal{X}(-1,1,1,-1,1)$ is the set of those points
$z=([F],Q,\delta\ \mathrm{mod}A_Q)\in\mathbf{P}(\boldsymbol
{\mathcal{E}})$, with $(q,l)=\rho(z)$, for which $q\not\in l$,
$q\ne\sing(F)\not\in l$. This is clearly a nonempty 
open subset of the scheme 
$\mathbf{P}(\boldsymbol{\mathcal{E}})$ 
which is dense since $\mathbf{P}(\boldsymbol{\mathcal{E}})$ 
is irreducible by Proposition \ref{descriptn of B,B'}.(ii). 
\end{proof}

Consider the sheaf $\mathbf{E}$ defined in \eqref{bold E}. Let
$\mathbf{r}:\mathbf{P}(\mathbf{E})\to\PP\times\mathbf{P}
(\boldsymbol{\mathcal{E}})$ be the structure morphism, and 
consider the composition $\mathbf{t}=pr_1\circ\mathbf{r}:
\mathbf{P}(\mathbf{E})\to\PP$ and the "diagonal" embedding 
$\mathbf{j}:\mathbf{P}(\mathbf{E})\hookrightarrow\PP\times
\mathbf{P}(\mathbf{E}),\ w\mapsto(\mathbf{t}(w),w)$. 
Set $\mathbf{P}(\mathbf{\tilde{E}}):=\mathbf{E}
\otimes_{\calo_{\mathbf{P}(\boldsymbol{\mathcal{E}})}}
\calo_{\mathbf{P}(\mathbf{E})}$. By construction, $\mathbf{j}^*
\mathbf{\tilde{E}}=\mathbf{r}^*\mathbf{E}$ and we obtain the 
composition of surjections
$\mathbf{\tilde{e}}:\ \mathbf{\tilde{E}}\twoheadrightarrow
\mathbf{j}_*\mathbf{j}^*\mathbf{\tilde{E}}=\mathbf{j}_*
\mathbf{r}^*\mathbf{E}\twoheadrightarrow \mathbf{j}_*
\calo_{\mathbf{P}(\mathbf{E})}(1)$ which yields an exact
$\calo_{\PP\times\mathbf{P}(\mathbf{E})}$-triple, where 
$\mathbf{\hat{E}}:=\ker\mathbf{\tilde{e}}$:
\begin{equation}\label{hat bold E}
0\to\mathbf{\hat{E}}\to\mathbf{\tilde{E}}\xrightarrow{\mathbf
{\tilde{e}}}\mathbf{j}_*\calo_{\mathbf{P}(\mathbf{E})}(1)\to0.
\end{equation}
We will also consider the exact triples of the form
\begin{equation}\label{new extn Q}
0\to Z\to Q\to i_*\calo_l(-1)\to0,\ \ \ \ \ 
\dim Z=0,\ \ \ \mathrm{length}(Z)=2,
\end{equation}
where $i:l\hookrightarrow\PP$ is the embedding of a line $l\in 
G$.
\begin{Prop}\label{X(-1,1,1,-1,2)}
The sheaf $\mathbf{\hat{E}}$ defined in \eqref{hat bold E} 
determines the modular morphism
$$ \hat{\Phi}:\mathbf{P}
(\mathbf{E})\to\mathcal{M}(-1,2,0),\ w\mapsto\left[\mathbf
{\hat{E}}|_{\PP\times\{w\}}\right] , $$
and the closure $\overline
{\hat{\Phi}(\mathbf{P}(\mathbf{E}))}$ of its image in 
$\mathcal{M}(-1,2,0)$ coincides with the scheme $\mathrm{X}
(-1,1,1,-1,2)$. In particular, $\mathrm{X}(-1,1,1,-1,2)$ 
contains all the points $[E]$ such that $Q=E^{\vee\vee}/E$ 
fits in the triple of the form \eqref{new extn Q}.
\end{Prop}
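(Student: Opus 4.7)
The plan is to construct the modular morphism $\hat{\Phi}$ from the family \eqref{hat bold E}, prove that $\mathbf{P}(\mathbf{E})$ is irreducible of dimension $15$ (matching that of $\mathrm{X}(-1,1,1,-1,2)$ as given by Theorem \ref{NewComponentsmixed}), and identify the image with $\mathrm{X}(-1,1,1,-1,2)$ via an explicit analysis on an open dense subset. The final ``in particular'' assertion will then follow by an explicit construction of $w\in\mathbf{P}(\mathbf{E})$ from any given $[E]$ satisfying the stated condition.

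First, for $w\in\mathbf{P}(\mathbf{E})$ with $(p,z)=\mathbf{r}(w)$, the restriction of \eqref{hat bold E} to $\PP\times\{w\}$ is the exact triple $0\to\hat{E}_w\to E_z\to\calo_p\to 0$, where $E_z\in\calm(-1,2,2)$ by Proposition \ref{P(E)=X}. Proposition \ref{ChernClasses}(iii) yields $(c_1,c_2,c_3)(\hat{E}_w)=(-1,2,0)$, and stability of $\hat{E}_w$ follows from stability of $E_z$ since any destabilizing rank-one subsheaf of $\hat{E}_w$ injects into $E_z$. Flatness of $\mathbf{\hat{E}}$ over $\mathbf{P}(\mathbf{E})$ is immediate from the triple \eqref{hat bold E}, so $\hat{\Phi}$ is a well-defined morphism into $\calm(-1,2,0)$. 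For the irreducibility and dimension of $\mathbf{P}(\mathbf{E})$, the base $\mathbf{P}(\boldsymbol{\mathcal{E}})$ is irreducible of dimension $11$ by Proposition \ref{descriptn of B,B'}(iii), and each fibre of $p_{\mathbf{E}}:\mathbf{P}(\mathbf{E})\to\mathbf{P}(\boldsymbol{\mathcal{E}})$ equals $\mathbf{P}(E_z)$; since $E_z$ fits in the two-step filtration $0\to E_z\to\mathcal{E}\to\calo_q\to 0$, $0\to\mathcal{E}\to F\to\calo_l(-1)\to 0$, Lemma \ref{two irred} guarantees that $\mathbf{P}(E_z)$ is irreducible of dimension $4$. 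Following the argument of Theorem \ref{T(-1,2,4,2)}, realising $\mathbf{P}(\mathbf{E})$ as a Cartier divisor in a suitable irreducible projective bundle over $\mathbf{P}(\boldsymbol{\mathcal{E}})$ and combining with the constant fibre dimension then forces $\mathbf{P}(\mathbf{E})$ to be irreducible of dimension $15$.

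To identify the image, let $\mathcal{U}\subset\mathbf{P}(\mathbf{E})$ consist of those $w$ for which $p\notin l\cup\{q\}\cup\sing(F)$, where $([F],Q',\delta')$ represents $z$ with $Q'$ supported on $\{q\}\cup l$. On $\mathcal{U}$ the quotient $F/\hat{E}_w$ sits in $0\to\calo_p\oplus\calo_q\to F/\hat{E}_w\to i_*\calo_l(-1)\to 0$; since $p,q\notin l$, one has $\ext^1(i_*\calo_l(-1),\calo_p\oplus\calo_q)=0$, so this extension splits, giving $F/\hat{E}_w\simeq\calo_S\oplus i_*\calo_l(-1)$ with $S=\{p,q\}$. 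Hence $\hat{\Phi}(\mathcal{U})\subset\mathcal{X}(-1,1,1,-1,2)$, and by continuity $\overline{\hat{\Phi}(\mathbf{P}(\mathbf{E}))}\subset\mathrm{X}(-1,1,1,-1,2)$. For the reverse inclusion, given any $[E]\in\mathcal{X}(-1,1,1,-1,2)$ with quotient map $\phi:F\twoheadrightarrow\calo_S\oplus i_*\calo_l(-1)$, pick either point of $S$ to serve as $q$, set $Q':=\calo_q\oplus i_*\calo_l(-1)$ and let $\delta'$ be the induced surjection $F\to Q'$ (omitting the other point $p$); then the residual surjection $\ker(\delta')\twoheadrightarrow\calo_p$ yields a point $w\in\mathbf{P}(\mathbf{E})$ with $\hat{\Phi}(w)=[E]$, so $\mathcal{X}(-1,1,1,-1,2)\subset\hat{\Phi}(\mathbf{P}(\mathbf{E}))$ and consequently $\overline{\hat{\Phi}(\mathbf{P}(\mathbf{E}))}=\mathrm{X}(-1,1,1,-1,2)$. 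The last assertion follows by the same construction: for any $[E]\in\calm(-1,2,0)$ with $Q_E$ fitting in \eqref{new extn Q}, Proposition \ref{ChernClasses} together with \eqref{chi(Q_E(t))} forces $E^{\vee\vee}\in\calr(-1,1,1)$, and choosing any length-one artinian quotient $Z\twoheadrightarrow\calo_p$ and setting $Q':=Q_E/\calo_p$, $\tilde{E}_z:=\ker(E^{\vee\vee}\twoheadrightarrow Q')$ exhibits $[E]$ as $\hat{\Phi}(w)$ for a suitable $w\in\mathbf{P}(\mathbf{E})$, so $[E]\in\mathrm{X}(-1,1,1,-1,2)$.

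The main technical obstacle is ruling out spurious higher-dimensional components of $\mathbf{P}(\mathbf{E})$ over the locus in $\mathbf{P}(\boldsymbol{\mathcal{E}})$ where $E_z$ degenerates (e.g.\ where $q\in l$ or $q=\sing(F)$, so that $\mathbf{P}(E_z)$ may have fibres over $\PP$ of jumping dimension); as in the proof of Theorem \ref{T(-1,2,4,2)}, this is handled by a small-birational-modification analysis combined with the Cartier divisor description of $\mathbf{P}(\mathbf{E})$.
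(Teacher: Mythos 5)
Your argument is correct and follows essentially the same route as the paper: irreducibility and the dimension count for $\mathbf{P}(\mathbf{E})$ come from the fibration over the irreducible $11$-dimensional scheme $\mathbf{P}(\boldsymbol{\mathcal{E}})$ with fibres $\mathbf{P}(E_z)$, which Lemma \ref{two irred} shows are irreducible of dimension $4$ for \emph{every} $z$ with no genericity assumption (so the extra Cartier-divisor analysis you invoke to rule out jumping fibres is not actually needed here, unlike in Theorem \ref{T(-1,2,4,2)}), and the identification of $\overline{\hat{\Phi}(\mathbf{P}(\mathbf{E}))}$ with $\mathrm{X}(-1,1,1,-1,2)$ is the same density argument via the universal property of projective spectra, which you spell out more explicitly than the paper does. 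The only slips are minor: in the last step you need a length-one \emph{subsheaf} $\calo_p\subset Z$ (which always exists for an artinian sheaf of length $2$), not a quotient $Z\twoheadrightarrow\calo_p$, in order for $Q':=Q_E/\calo_p$ and $E_z:=\ker(E^{\vee\vee}\to Q')$ to make sense; and the open set $\mathcal{U}$ should also impose $q\notin l$ and $(l\cup\{q\})\cap\sing(F)=\emptyset$ so that the image lands in $\mathcal{X}(-1,1,1,-1,2)$ as defined in \eqref{X_F} and \eqref{familyX}.
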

\begin{proof}
First note that, by the definition of the sheaf $\mathbf{E}$, 
the scheme $\mathbf{P}(\mathbf{E})$ is fibered over the scheme
$\mathbf{P}(\boldsymbol{\mathcal{E}})$ with fiberes of the form
$\mathbf{P}(E)$ described in Lemma \ref{two irred}. Hence by 
that Lemma, these fibers are irreducible of dimension 4. 
Besides, the 
scheme $\mathbf{P}(\boldsymbol{\mathcal{E}})$ is also 
irreducible of dimension 11 by Proposition \ref{descriptn of 
B,B'}.(iii). Hence the scheme $\mathbf{P}(\mathbf{E})$ is 
irreducible of dimension 15. The fact that it contains the 
scheme $\mathcal{X}(-1,1,1,-1,2)$ as a dense open subset is
proved in the same way as the statement of item (ii) of Proposition 
\ref{P(E)=X}, based on the universal property of 
$\mathbf{P}(\mathbf{E})$ from \cite[Ch. II, Prop. 7.12]{H}.
\end{proof}

\begin{Prop}\label{X in T} The following claims hold.
\begin{itemize}
\item[(i)] The scheme $\mathrm{X}(-1,1,1,-1,1)$ is contained in 
$\mathrm{T}(-1,2,4,1)$.
\item[(ii)] The scheme $\mathrm{X}(-1,1,1,-1,2)$ is contained 
in $\mathrm{T}(-1,2,4,2)$.
\item[(iii)] The scheme $\mathrm{X}(-1,1,1,0,1)$ is contained 
in $\mathrm{T}(-1,2,2,1)$.
\end{itemize}
\end{Prop}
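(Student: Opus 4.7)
The plan is to exhibit, for a generic $[E]\in\mathcal{X}(-1,1,1,r,s)$ with $(r,s)\in\{(-1,1),(-1,2),(0,1)\}$, an explicit one-parameter flat deformation landing in the dense open $\calt$ of the target $\mathrm{T}$-component; since $\mathcal{X}$ is dense in $\mathrm{X}$, the inclusions follow by taking closures. The first move is to split off the line-part of the quotient: by Proposition \ref{P(E)=X} (together with Proposition \ref{X(-1,1,1,-1,2)} for (ii)), a generic $E$ fits in an extension $0\to E\to F\xrightarrow{\varphi}\calo_S\oplus i_*\calo_l(r)\to 0$ with $F\in\calr(-1,1,1)$, $|S|=s$, $l\in G(2,4)$, and $(S\cup l)\cap\sing(F)=\emptyset$. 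Letting $\pi$ denote the projection onto the second summand and setting $\mathcal{E}:=\ker(\pi\circ\varphi)$ produces two exact sequences
\[
0\to\mathcal{E}\to F\to i_*\calo_l(r)\to 0,\qquad 0\to E\to\mathcal{E}\to\calo_S\to 0,
\]
with $\mathcal{E}$ stable, $\mathcal{E}^{\vee\vee}=F$, and $[\mathcal{E}]\in\calm(-1,2,m)$ where $m=4$ in (i), (ii) and $m=2$ in (iii) (a routine Chern-class computation).

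The crucial point is to prove $[\mathcal{E}]\in\overline{\calr(-1,2,m)}$. For (i) and (ii) this is immediate from Theorem \ref{M(-1,2,4)}, which gives $\calm(-1,2,4)=\overline{\calr(-1,2,4)}$. For (iii) one combines Lemma \ref{Extisomixed} with Proposition \ref{important}(iii) (applied with $T=i_*\calo_l$ and $Z=0$, the hypothesis $\ext^2(F,F)=0$ holding by Remark \ref{R(-1,1,1)}) and the vanishing $\h^0(\lhom(\mathcal{E},i_*\calo_l(-4)))=0$ (carried out exactly as in the proof of Theorem \ref{NewComponentsmixed}(ii) in the boundary case $0\ge r\ge e$) to deduce $\ext^2(\mathcal{E},\mathcal{E})=0$; by Theorem \ref{dimext1} this forces $\dim\ext^1(\mathcal{E},\mathcal{E})=11$. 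Thus $[\mathcal{E}]$ is a smooth point of a unique $11$-dimensional irreducible component of $\calm(-1,2,2)$, and since $\calr(-1,2,2)$ is nonempty of expected dimension $11$ (cf.\ Theorem \ref{T(-1,2,2i,1)} for $i=1$), one identifies this component with (a component of) $\overline{\calr(-1,2,2)}$ by exhibiting an explicit deformation of $\mathcal{E}$ into a reflexive sheaf, for instance through a generic perturbation of the two-term resolution of $\mathcal{E}$ extracted from the proof of Proposition \ref{descriptn of B,B'}.

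Once $[\mathcal{E}]\in\overline{\calr(-1,2,m)}$ is established, pick a smooth pointed curve $(C,0)$ and, after \'etale base change if needed (using the fine-moduli property~(II)), a flat family $\mathbf{F}$ on $C\times\PP$ with $\mathbf{F}_0=\mathcal{E}$ and $\mathbf{F}_t\in\calr(-1,2,m)$ for $t\neq 0$. Since both $\mathcal{E}$ and $\mathbf{F}_t$ are locally free on a neighborhood of $S$, one can choose disjoint sections $\sigma_1,\dots,\sigma_s:C\to C\times\PP$ passing through the points of $S$ at $t=0$ and avoiding $\sing(\mathbf{F}_t)$ for all $t$, and build from them a flat family of length-$s$ quotients $\boldsymbol{\varepsilon}:\mathbf{F}\twoheadrightarrow\calo_{\mathbf{S}}$ extending $\mathcal{E}\twoheadrightarrow\calo_S$; the kernel $\mathbf{E}:=\ker\boldsymbol{\varepsilon}$ is then a flat deformation of $E$ whose generic member lies in $\calt(-1,2,m,s)\subset\mathrm{T}(-1,2,m,s)$, so $[E]\in\mathrm{T}(-1,2,m,s)$ and taking closures gives the desired inclusion. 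The main obstacle is the step just described for (iii): absent a global irreducibility statement for $\calm(-1,2,2)$ analogous to Theorem \ref{M(-1,2,4)}, identifying the component through $[\mathcal{E}]$ with $\overline{\calr(-1,2,2)}$ relies entirely on the explicit reflexive deformation of $\mathcal{E}$ indicated above.
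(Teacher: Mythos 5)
Your route is genuinely different from the paper's. The paper proves each inclusion by exhibiting a single explicit one-parameter degeneration \emph{inside} the target $\mathrm{T}$-family (a Serre-type extension built from a pencil of conics degenerating to two lines with one embedded point for (i), two embedded points for (ii), and a degenerating extension class over a fixed pair of disjoint lines for (iii)); the limit sheaf is checked to lie in $\mathcal{X}\cap\mathrm{T}$ and to be a \emph{smooth} point of the moduli space, so the irreducible set $\mathrm{X}$, containing a smooth point whose unique component is $\mathrm{T}$, must be contained in $\mathrm{T}$. You instead take an arbitrary point of $\mathcal{X}$, factor the elementary transformation through the intermediate sheaf $\mathcal{E}=\ker(F\onto i_*\calo_l(r))$, and push the residual transformation at $S$ along a deformation of $\mathcal{E}$ to reflexive sheaves. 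For (i) and (ii) this is sound: $[\mathcal{E}]\in\calm(-1,2,4)=\overline{\calr(-1,2,4)}$ by Theorem \ref{M(-1,2,4)} (proved earlier and independent of the present proposition, so there is no circularity), $\calr(-1,2,4)$ coincides with $\calr^*(-1,2,4)$ because it is smooth of the expected dimension $11$, and the family of length-$s$ quotients does extend along the deformation since the relative singular locus is closed and misses $S$ over $t=0$. Your approach buys uniformity (every point of $\mathcal{X}$ is treated at once, with no need to verify smoothness of a specially constructed limit); the paper's buys independence from knowing that the intermediate sheaf is itself a limit of reflexive sheaves.

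That last point is exactly where your part (iii) has a genuine gap. Everything hinges on $[\mathcal{E}']\in\overline{\calr(-1,2,2)}$ for $\mathcal{E}'=\ker(F\onto i_*\calo_l)$, and this is asserted rather than proved: the computation $\ext^2(\mathcal{E}',\mathcal{E}')=0$ only shows that $[\mathcal{E}']$ is a smooth point of a unique $11$-dimensional component, not which one, and the ``generic perturbation of the two-term resolution'' you invoke is not available as stated --- the resolution from the proof of Proposition \ref{descriptn of B,B'} with middle term $\op3(-2)\oplus2\cdot\op3(-1)$ is for $\ker(F\to\calo_l(-1))$ (the case (i)/(ii) sheaf), whereas for $\ker(F\to\calo_l)$ the relevant middle term is $\mathcal{G}'=\ker(3\cdot\op3(-1)\to\calo_l)$, which is not a direct sum of line bundles, so ``perturbing the matrix'' has no immediate meaning there. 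The gap is repairable without new geometry: once your part (i) is established, Theorem \ref{M(-1,2,2)} becomes available (its proof uses only part (i) of the present proposition), and it says the only components of $\calm(-1,2,2)$ are $\overline{\calr(-1,2,2)}$ of dimension $11$ and $\mathrm{T}(-1,2,4,1)$ of dimension $15$; since the unique component through the smooth point $[\mathcal{E}']$ has dimension $\dim\ext^1(\mathcal{E}',\mathcal{E}')=11$, it must be $\overline{\calr(-1,2,2)}$. As written, however, the decisive step of (iii) is missing.
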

\begin{proof}
(i) We will construct a flat family $\mathbb{E}=\{E_t\}_{t\in
\mathbb{P}^1}$ of sheaves from $\mathrm{T}(-1,2,4,1)$ such 
that, 
for a certain point $t_0\in\mathbb{P}^1$, $E_{t_0}$ is a smooth 
point of $\calm(-1,2,2)$ lying in $\mathrm{X}(-1,1,1,-1,1)\cap 
\mathrm{T}(-1,2,4,1)$. From this the statement (i) will follow. 
Fix a plane $\mathbb{P}^2$ in $\PP$ and choose a pencil of 
conics in $\mathbb{P}^2$ considered as a divisor $D$ in 
$\mathbb{P}^2\times\mathbb{P}^1$, with the projection 
$D\xrightarrow{p}\mathbb{P}^1$, and, for $t\in\mathbb{P}^1$, 
denote $C_t=p^{-1}(t)$. We choose the pencil $D$ in such a way 
that, for two distinct marked points $t_0,t_1\in\mathbb{P}^1$, 
$C_{t_0}=l_1\cup l_2$ is a union of two distinct lines and
$C_{t_1}$ is a smooth conic intersecting $C_{t_0}$ at 4 distinct
points. Fix a point $q\in\PP\smallsetminus\mathbb{P}^2$, and 
on $\Sigma=\PP\times\mathbb{P}^1$ consider the line $L=\{q\}
\times\mathbb{P}^1$ and the extension of sheaves, where
$\mathcal{A}=\op3(-1)\boxtimes\calo_{\mathbb{P}^1}(-1)$:
     \begin{equation}\label{bbE}
0\to\mathcal{A}\otimes\cali_{L,\Sigma}\to\mathbb{E}\to
\cali_{D,\Sigma}\to0.
\end{equation} 
The extension group corresponding to \eqref{bbE} is
$V:=\ext^1(\cali_{D,\Sigma},\mathcal{A}\otimes\cali_{L,\Sigma})
\simeq\ext^1(\cali_{D,\Sigma},\mathcal{A})\simeq\\
\h^0(\lext^1(\cali_{D,\Sigma},\mathcal{A}))\simeq
\h^0(\lext^2(\calo_{D},\mathcal{A}))\simeq\h^0(\op3(2)\boxtimes
\calo_{\mathbb{P}^1}|_D)\simeq\h^0(\calo_{\mathbb{P}^2}(2))$. 
Thus the element of $V$ defining the extension \eqref{bbE} is 
understood as a section $0\ne s\in\h^0(\calo_{\mathbb{P}^2}
(2))$. Now pick the section $s$ such that it vanishes on the 
line $l_1$ and  doesn't vanish on the line $l_2$; hence it also 
doesn't vanish on the conic $C_{t_1}$. Then by the Serre 
construction we obtain the following properties of sheaves 
$E_{t}=\mathbb{E}|_{\PP\times\{t\}}$.\\ 
(i.a) Under the generic choice of the conic $C_{t_1}$, for 
generic 
$t\in\mathbb{P}^1$, the sheaf $[E_{t}]$ is a generic sheaf from
$\mathrm{T}(-1,2,4,1)$. In other words, $\mathbb{P}^1\subset 
\mathrm{T}(-1,2,4,1)$.
In particular, $[E_{t_0}]\in \mathrm{T}(-1,2,4,1)$.\\
(i.b) The sheaf $E_{t_0}$ fits in the exact triple
$0\to E_{t_0}\xrightarrow{\mathrm{can}}E_{t_0}^{\vee\vee}\to
\calo_l(-1)\oplus\calo_q\to0$, where $[E_{t_0}^{\vee\vee}]\in 
M$ and, by the construction of $E_{t_0}$, $q\not\in
\sing(E_{t_0}^{\vee\vee})\cup l$. This means that 
$[E_{t_0}]\in\mathcal{X}(-1,1,1,-1,1)$, and
Theorem \ref{NewComponentsmixed}.(iii) implies that 
$\dim\ext^1(E_{t_0},E_{t_0})=15$. Hence, since
$\dim \mathrm{T}(-1,2,4,1)=15$ (see Theorem \ref{0dcomp}), it 
follows that $E_{t_0}$ is a smooth point of $\mathrm{T}
(-1,2,4,1)$ and of $\calm(-1,2,2)$ as well. \\
(ii) This is completely similar to the statement (i) above.
The only difference is that, instead of fixing a point 
$q\in\PP\smallsetminus\mathbb{P}^2$, we fix two distinct points 
$q_1,q_2\in\PP\smallsetminus\mathbb{P}^2$, and on $\Sigma=\PP
\times\mathbb{P}^1$ consider the two corresponding lines 
$L_i=\{q_i\}\times\mathbb{P}^1$ and the extension of sheaves 
similar to \eqref{bbE}: $0\to\mathcal{A}\otimes\cali_{L_1\sqcup 
L_2,\Sigma}\to\mathbb{E}\to\cali_{D,\Sigma}\to0$. Respectively, 
for $V$ we take the group $\ext^1(\cali_{D,\Sigma},\mathcal{A}
\otimes\cali_{L_1\sqcup L_2,\Sigma})\simeq\h^0
(\calo_{\mathbb{P}^2}(2))$. The rest of the argument is 
literally the same as in (i).

(iii) Similar to the above we construct a flat family $\mathbb{E}
=\{E_t\}_{t\in\mathbb{A}^1}$ of sheaves from $\mathrm{T}(-1,2,2,1)$ 
such that, for a 
certain point $t_0\in\mathbb{A}^1$, $E_{t_0}$ is a smooth point 
of $\calm(-1,2,0)$ lying in $\mathrm{X}(-1,1,1,0,1)\cap 
\mathrm{T}(-1,2,2,1)$. 
From this the statement (ii) will follow. We will use the 
description of sheaves from $\calm(-1,2,2)$ given in 
\cite[Lemma 2.4]{Chang}. Thus, instead of the above family of 
conics $D=\{C_t
\}_{t\in\mathbb{P}^1}$ we take for $C_t,\ t\in\mathbb{A}^1$, a 
fixed union $Y=l_1\sqcup l_2$ of two disjoint lines in $\PP$, 
fix a point $q\in\PP\smallsetminus Y$, set $D=Y\times\mathbb
{A}^1$, $L=\{q\}\times\mathbb{A}^1$. For these data consider 
the extension \eqref{bbE}, where we set 
$\mathcal{A}=\op3(-1)\boxtimes\calo_{\mathbb{A}^1}$, and then d 
the extension group 
$V=\ext^1(\cali_{D,\Sigma},\mathcal{A}\otimes
\cali_{L,\Sigma})$ as above. One easily see that $V\cong 
\h^0(\calo_{l_1})\oplus\h^0(\calo_{l_2})$. For $i=1,2$ pick a 
nonzero vector $v_i\in\h^0(\calo_{l_i})$ and identify the base
$\mathbb{A}^1$ of the family $\{C_t\},\ t\in\mathbb{A}^1$, with
the subset $\{(v_1,tv_2)|t\in\mathbf{k}\}$. By the Serre 
construction we obtain the following properties of sheaves 
$E_{t}=\mathbb{E}|_{\PP\times\{t\}}$.\\ 
(a) For $t\in\mathbb{A}^1\smallsetminus\{0\}$, the sheaf 
$[E_{t}]$ by definition belongs to $\mathcal{T}(-1,2,2,1)$. It
it follows that $\mathbb{A}^1\subset\mathrm{T}(-1,2,2,1)$.
In particular, $[E_0]\in\mathrm{T}(-1,2,2,1)$.\\
(b) The sheaf $E_0$ fits in the exact triple
$0\to E_0\xrightarrow{\mathrm{can}}E_0^{\vee\vee}\to
\calo_{_2}\oplus\calo_q\to0$, where $[E_0^{\vee\vee}]\in M$ 
and, by the construction of $E_0$, $q\not\in\sing
(E_0^{\vee\vee})\cup l$. This means that $[E_0]\in\mathcal{X}
(-1,1,1,0,1)$, and Theorem \ref{NewComponentsmixed}.(iii) 
implies that $\dim\ext^1(E_0,E_0)=15$. Hence, since $\dim\mathrm
{T}(-1,2,2,1)=15$ (see Theorem \ref{0dcomp}), it follows that 
$E_0$ is a smooth point of $\mathrm{T}(-1,2,2,1)$ and of 
$\calm(-1,2,0)$ as well. \end{proof}

\section{Irreducible components of $\calm(-1,2,2)$}
\label{irreducible of M(-1,2,2)}

Now, we are in position to prove the next main result of 
this paper.

\begin{Teo}\label{M(-1,2,2)}
The moduli space $\calm(-1,2,2)$ of rank 2 stable sheaves on 
$\PP$ with Chern classes $c_1 = -1, c_2 = 2, c_3 = 2$, has 
exactly $2$ irreducible rational components, namely:
\begin{itemize}
\item[(i)] the closure $\overline{\calr(-1,2,2)}$ of the family of 
reflexive sheaves $\calr(-1,2,2)$, of dimension $11$;
\item[(ii)] the irreducible component $\mathrm{T}(-1,2,4,1)$ given  by 
Theorem \ref{0dcomp}, of dimension $15$, whose generic element 
is a torsion free sheaf $E$ such that $E^{\vee \vee} \in 
\calr(-1,2,4)$ and $Q_E$ is a sheaf of length 1;
\item[(iii)] in addition, $\calm(-1,2,2)\smallsetminus\calr(-1,2,2)=
\mathrm{T}(-1,2,4,1)$.
\end{itemize}
\end{Teo}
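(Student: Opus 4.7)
The plan follows closely the strategy of Theorem~\ref{M(-1,2,4)}: identify the two asserted components, verify their rationality, and then show every non-reflexive sheaf in $\calm(-1,2,2)$ belongs to $\mathrm{T}(-1,2,4,1)$ via a Chern-class case analysis on $Q_E = E^{\vee\vee}/E$. For items (i) and (ii), $\overline{\calr(-1,2,2)}$ is a rational irreducible component of dimension $11$ by the results of Hartshorne \cite{Harshorne-Reflexive} and Chang \cite{Chang}, while $\mathrm{T}(-1,2,4,1)$ is a $15$-dimensional irreducible component by Theorem~\ref{T(-1,2,2i,1)} (applied with $i=2$), rational by Theorem~\ref{Tmain2}(i) applied to $R = \calr(-1,2,4) \in \Sigma_{-1}$ with $s=1$. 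Being of distinct dimensions, these are certainly different components.

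For (iii), let $[E] \in \calm(-1,2,2)$ with $Q_E \ne 0$. Proposition~\ref{ChernClasses} combined with Hartshorne's bound $c_3(E^{\vee\vee}) \le c_2(E^{\vee\vee})^2$ \cite[Thm~8.2]{Harshorne-Reflexive}, exactly as in the proof of Theorem~\ref{M(-1,2,4)}, narrows the possibilities to two cases. If $\dim Q_E = 0$, then $\mathrm{length}(Q_E) = 1$ is forced and $[E^{\vee\vee}] \in \calr(-1,2,4)$; accordingly $[E]$ belongs to the open subset $\mathbf{P}(\mathbf{F}_2)$ of $\mathrm{T}(-1,2,4,1)$ of Theorem~\ref{T(-1,2,2i,1)}. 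If $\dim Q_E = 1$, the case $\mult Q_E = 2$ is excluded by \cite[Cor~3.5]{H1978}, so $\mult Q_E = 1$ and $[E^{\vee\vee}] \in \calr(-1,1,1)$; writing $0 \to Z_E \to Q_E \to i_*\mathcal{O}_l(r) \to 0$ with $\mathrm{length}(Z_E) = s$, formula~\eqref{chi(Q_E(t))} together with $c_3(E^{\vee\vee}) = 1$ gives $r + s = 0$, while the splitting behaviour~\eqref{splitingF} restricts $r \in \{-1, 0\}$, leaving $(r, s) \in \{(-1, 1), (0, 0)\}$. The first subcase places $[E] \in \mathrm{X}(-1,1,1,-1,1) \subset \mathrm{T}(-1,2,4,1)$ directly by Proposition~\ref{X in T}(i).

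The remaining subcase $(r,s) = (0,0)$ is the main technical obstacle: $[E] \in \mathrm{X}(-1,1,1,0,0)$, a locus of dimension only $9$ by~\eqref{dimfamilymixed}, so it cannot itself be a component and must be absorbed into $\mathrm{T}(-1,2,4,1)$. The plan is to prove $\mathrm{X}(-1,1,1,0,0) \subset \mathrm{T}(-1,2,4,1)$ via a Serre-construction degeneration modelled on the arguments in the proof of Proposition~\ref{X in T}(i) and~(iii): for each prescribed $[E_0] \in \mathrm{X}(-1,1,1,0,0)$, construct a flat family $\{\mathbb{E}_t\}_{t \in \mathbb{A}^1}$ of stable sheaves in $\calm(-1,2,2)$ via an extension of the form~\eqref{bbE} on $\Sigma = \PP \times \mathbb{A}^1$, with $D$ a suitably chosen pencil of conics and a variable zero-scheme $L$ collapsing into the line $l = \mathrm{Supp}(Q_{E_0})$ as $t \to 0$, arranged so that $[\mathbb{E}_t]$ is a generic point of $\mathrm{T}(-1,2,4,1)$ for $t \ne 0$ and $\mathbb{E}_0 \simeq E_0$. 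Closedness of $\mathrm{T}(-1,2,4,1)$ then forces $[E_0]$ into it, completing (iii). The delicate points will be choosing $D$ and $L$ so that (a) $\mathbb{E}_t^{\vee\vee}$ jumps correctly from $\calr(-1,1,1)$ at $t = 0$ to $\calr(-1,2,4)$ for $t \ne 0$, and (b) the family is indeed flat across the central fibre; both are controlled by the Chern-class computations of Proposition~\ref{ChernClasses} together with the Ext-estimates of Theorems~\ref{dimext1} and~\ref{NewComponentsmixed}.
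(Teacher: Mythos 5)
Your case analysis coincides with the paper's for items (i)--(ii), for the $\dim Q_E=0$ case, for the exclusion of $\mult Q_E=2$, and for the subcase $(r,s)=(-1,1)$ (where, like the paper, you route through $\mathrm{X}(-1,1,1,-1,1)$ and Proposition \ref{X in T}(i)). The divergence — and the gap — is in the subcase $(r,s)=(0,0)$. First, a small oversight: not every such sheaf lies in $\mathrm{X}(-1,1,1,0,0)$; when $\sing(E^{\vee\vee})\in l$ the sheaf lies a priori only in the image of the family $Y(0)$ of Lemma \ref{nonemptyfamily} (of dimension $8$), since the open dense part $\mathcal{X}(-1,1,1,0,0)$ defining $\mathrm{X}(-1,1,1,0,0)$ requires $l\cap\sing(E^{\vee\vee})=\emptyset$, and you give no argument that the closure absorbs the special position.

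Second, and more seriously, your plan for this subcase replaces an easy argument with a hard, unexecuted one. The paper does not prove $\mathrm{X}(-1,1,1,0,0)\subset\mathrm{T}(-1,2,4,1)$ here at all: it only observes that the two strata in question have dimensions $9$ and $8$, both strictly below the bound $8c_2-3+2e=11$ of Corollary \ref{dim calm}, so neither can contain the generic point of an irreducible component; combined with the exhaustive stratification this already pins down the two components. Your alternative — constructing, for each $[E_0]$ in this stratum, a flat family over $\mathbb{A}^1$ with generic member in $\mathcal{T}(-1,2,4,1)$ and central fibre $E_0$ — is genuinely nontrivial: the double dual must jump from $\calr(-1,1,1)$ to $\calr(-1,2,4)$ while $Q_E$ degenerates from $\calo_p$ to $i_*\calo_l$ (untwisted, with no residual embedded point), and this is not a routine variant of the extensions \eqref{bbE} used in Proposition \ref{X in T}, where the limiting quotient always acquires either a twist $\calo_l(-1)$ or an extra point $\calo_q$. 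You acknowledge the "delicate points" but supply no construction, so the step on which your proof of the component count rests is missing. You should either carry out that degeneration in detail or, much more simply, invoke the dimension bound as the paper does.
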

\begin{proof} By \cite[Thm 2.5]{Chang}, $\calr(-1,2,2)$ is 
irreducible, nonsingular of dimension $11$, and its closure 
$\overline{\calr(-1,2,2)}$ in $\calm(-1,2,2)$ is an 
irreducible component of $\calm(-1,2,2)$ of dimension $11$. 
Consider $E \in \calm(-1,2,2) \setminus \calr(-1,2,2)$. By 
Proposition \ref{ChernClasses}, either $\dim Q_E=1$ and 
$1\leq\mult Q_E\leq 2$, or $\dim Q_E=0$. Consider all the 
possibilities for $\dim Q_E$ and $\mult Q_E$.\\
i) If $\dim Q_E=1$ and $\mult Q_E = 2$, then $c_2(E^{\vee 
\vee}) = 0$, and, as in the case i) of the proof of Theorem
\ref{M(-1,2,4)}, we are led to a contradiction.\\
ii) If $\dim Q_E=1$ and $\mult Q_E = 1$, then $c_2(E^{\vee 
\vee})=1$ and $c_3(E^{\vee\vee})=1$ and $Q_E$ is supported 
on a line. Then $Q_E$ fit in an exact sequence of the form 
(\ref{extensionQ_E}) where $Z_E$ is the maximal artinian 
subsheaf of $Q_E$. Then the Euler characteristic of $Q_E(t)$ 
is given by formula (\ref{chi(Q_E(t))}) which together with 
(\ref{fundamentalcomponents}) yields:
$-1=\chi(E)=\chi(E^{\vee\vee})-\chi(Q_E)=-1-r-s$. Hence 
$-r-s=0$ and, since we have an epimorphism 
$\delta:E^{\vee\vee}
\to Q_E$, from equation (\ref{splitingF}) it 
follows that $r\geq -1$. This implies that the possible 
values for $r$ and $s$ are $r=s=0$ or $r=-1$, $s=1$.\\
Case ii.1) Assume that $r=s=0$. In this case, 
$Q_E \simeq i_{*}\mathcal{O}_{l}$, for some line $l$, where 
$i:l\into\PP$ is the embedding. If $l\cap\sing 
E^{\vee\vee}=\emptyset$, then $E$ is a sheaf in 
$\mathcal{X}(-1,1,1,0,0)$, i. e. a generic sheaf in 
$\mathrm{X}(-1,1,1,0,0)$, where 
$\dim\mathrm{X}(-1,1,1,0,0)=9$ 
by \eqref{dimfamilymixed}. However, this dimension is too 
small for $\mathrm{X}(-1,1,1,0,0)$ to be an irreducible 
component of $\calm(-1,2,2)$. Next, if $l\cap\sing
~E^{\vee\vee}\neq\emptyset$, then $E \in Y(0)$ and  by 
Lemma \ref{nonemptyfamily} also $Y(0)$ does not fill an 
irreducible component of $\calm(-1,2,2)$. \\
Case ii.2) Assume that $r=-1$ and $s=1$. In this case, $Q_E$ 
fits into the exact triple \eqref{extn Q} for some pair
$(q,l)\in\PP\times G$, where $i:l \into\PP$ is the embedding 
and $Q=Q_E$. Since $[E^{\vee\vee}]\in M=\calm(-1,1,1)$, 
Proposition \ref{P(E)=X}.(i),(ii) yields that 
$[\delta:E^{\vee\vee}\twoheadrightarrow Q]=\delta\ \mathrm
{mod}A_Q$ is the point in
$\mathbf{P}(\boldsymbol{\mathcal{E}})$, i. e., $[E]\in
\mathrm{X}(-1,1,1,-1,1)$. This together with Proposition \ref{X 
in T} implies that $[E]\in\mathrm{T}(-1,2,4,1)$. \\
iii) If $\dim Q_E=0$, then $s=\mathrm{length}(Q_E)>0$. By 
Proposition \ref{ChernClasses}, $c_2(E)=c_2(E^{\vee\vee})=2$ 
and $c_3(E^{\vee\vee})=c_3(E)+2s=2+2s\ge4$. On the other hand,
$c_3(E^{\vee\vee})\le c_2(E^{\vee\vee})^2=4$ by \eqref{c3 le} 
since $E^{\vee\vee}$ is stable. Hence $s = 1$, 
$c_3(E^{\vee\vee})=4$, i. e. $Q_E\simeq\mathcal{O}_p$ for 
some point $p\in\PP$. Then by Theorem \ref{T(-1,2,2i,1)}.(ii) 
$[E]$ belongs to the irreducible component $\mathrm{T}
(-1,2,4,1)$ of $\calm(-1,2,2)$.

In conclusion, we have proved that $\calm(-1,2,2) = 
\overline{\calr(-1,2,2)}\cup\mathrm{T}(-1,2,4,1)$.
Finally, remark that the rationality of $\overline{\calr(-1,2,2)}$ is known from \cite{Chang2}, and the rationality of $\mathrm{T}(-1,2,4,1)$ follows from Main Theorem \ref{main2}.
\end{proof}

\section{Irreducible components of $\calm(-1,2,0)$} 
\label{irreducible of M(-1,2,0)}

We are now ready to describe all the irreducible components of 
$\calm(-1,2,0)$.
\begin{Teo}\label{M(-1,2,0)} 
The moduli space $\calm(-1,2,0)$ of rank 2 stable sheaves on 
$\PP$ with Chern classes $c_1 = -1, c_2 = 2, c_3 = 0$, has 
exactly 4 irreducible rational components, namely:
\begin{itemize}
\item[(i)] The closure of the family of stable rank $2$ locally free 
sheaves $\calb(-1,2)$, of dimension $11$;
\item[(ii)] The irreducible component $\mathrm{X}(-1,1,1,1,0)$ of 
dimension $11$, described by Theorem \ref{NewComponentsmixed}, 
whose generic element is a torsion free sheaf $E$ such that 
$E^{\vee \vee} \in \calr(-1,1,1)$ and $Q_E=i_*\calo_l(1)$ for 
some line $i:l\hookrightarrow\PP$.
\item[(iii)] The irreducible component $\mathrm{T}(-1,2,2,1)$ of 
dimension $15$ described  in Theorem \ref{0dcomp}, whose 
generic sheaf is a torsion free sheaf $E$ such that $E^{\vee 
\vee}\in\calr(-1,2,2)$ and $Q_E$ is a sheaf of length 1.
\item[(iv)] The irreducible component $\mathrm{T}(-1,2,4,2)$ of 
dimension $19$ described by the Theorem \ref{0dcomp}, whose 
generic sheaf is a torsion free sheaf $E$ such that $E^{\vee 
\vee}\in\calr(-1,2,4)$ and $Q_E$ is a sheaf of length 2, 
supported at two distinct points.
\end{itemize}
\end{Teo}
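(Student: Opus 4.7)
The plan is to follow the template used for Theorems \ref{M(-1,2,4)} and \ref{M(-1,2,2)}: first verify that the four listed schemes are indeed irreducible, rational components of $\calm(-1,2,0)$, and then establish exhaustiveness by a case analysis on the singularity type of $Q_E=E^{\vee\vee}/E$. That the four schemes are irreducible components follows by assembling earlier results: $\overline{\calb(-1,2)}$ is classical (\cite{Hart2,Banica}); $\mathrm{X}(-1,1,1,1,0)$ is an irreducible component of the correct dimension by Theorem \ref{NewComponentsmixed}(i) combined with Remark \ref{R(-1,1,1)}, which ensures $\calr^{*}(-1,1,1)=\calr(-1,1,1)$ is irreducible of the expected dimension $3$; and $\mathrm{T}(-1,2,2,1)$, $\mathrm{T}(-1,2,4,2)$ are covered by Theorem \ref{0dcomp} and Theorem \ref{T(-1,2,4,2)} respectively. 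Rationality of the last three follows from Main Theorem \ref{main2}, while rationality of $\overline{\calb(-1,2)}$ is well-known. Their pairwise distinctness is immediate from the dimensions $(11,11,15,19)$ together with the contrasting behaviour of their generic sheaves (locally free versus non-locally free for the two $11$-dimensional components).

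For exhaustiveness, fix $[E]\in\calm(-1,2,0)\smallsetminus\calb(-1,2)$, so $Q_E\ne 0$. By Proposition \ref{ChernClasses}, either $\dim Q_E=0$ or $\dim Q_E=1$ with $1\le\mult Q_E\le 2$. If $\dim Q_E=0$ of length $s\ge1$, then $c_2(E^{\vee\vee})=2$ and $c_3(E^{\vee\vee})=2s$; Hartshorne's inequality $c_3\le c_2^2=4$ \cite[Thm. 8.2]{Harshorne-Reflexive} forces $s\in\{1,2\}$, and Theorems \ref{T(-1,2,2i,1)} and \ref{T(-1,2,4,2)} place $[E]$ in $\mathrm{T}(-1,2,2,1)$ or $\mathrm{T}(-1,2,4,2)$. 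The case $\dim Q_E=1$, $\mult Q_E=2$ is excluded verbatim as in case i) of the proof of Theorem \ref{M(-1,2,4)}: it forces $E^{\vee\vee}$ to be a stable locally free sheaf with $(c_1,c_2)=(-1,0)$, contradicting \cite[Cor. 3.5]{H1978}.

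The core case is $\dim Q_E=1$, $\mult Q_E=1$. Writing $Q_E$ as an extension $0\to Z_E\to Q_E\to i_{*}\calo_l(r)\to 0$ with $Z_E$ artinian of length $s\ge0$ and repeating the Euler-characteristic computation of case ii) of Theorem \ref{M(-1,2,4)}, one obtains $c_3(E^{\vee\vee})=2r+2s-1$. The bounds $0\le c_3\le c_2^2=1$ from \cite[Prop. 2.6, Thm. 8.2]{Harshorne-Reflexive} force $c_3=1$, hence $r+s=1$, and together with $r\ge -1$ from \eqref{splitingF} this leaves three subcases $(r,s)\in\{(1,0),(0,1),(-1,2)\}$, all with $[E^{\vee\vee}]\in\calr(-1,1,1)$. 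The subcase $(1,0)$ is the generic description of $\mathrm{X}(-1,1,1,1,0)$. For $(0,1)$, Proposition \ref{P(E)=X} identifies $[E]\in\mathrm{X}(-1,1,1,0,1)$, which by Proposition \ref{X in T}(iii) is contained in $\mathrm{T}(-1,2,2,1)$; for $(-1,2)$, Proposition \ref{X(-1,1,1,-1,2)} identifies $[E]\in\mathrm{X}(-1,1,1,-1,2)$, which by Proposition \ref{X in T}(ii) is contained in $\mathrm{T}(-1,2,4,2)$.

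The main obstacle will be handling the degenerate configurations within each subcase — namely those where the support of $Q_E$ meets $\sing(E^{\vee\vee})$. In the $(1,0)$ subcase with $l\cap\sing(E^{\vee\vee})\neq\emptyset$, such sheaves form a locus of the same type as the family $Y(1)$ of Lemma \ref{nonemptyfamily}, whose closure there is shown to have dimension too small to fill an irreducible component of $\calm(-1,2,0)$; an analogous argument, via the same dimension count as in case ii.1) of Theorem \ref{M(-1,2,2)}, takes care of the parallel degenerations in the $(0,1)$ and $(-1,2)$ subcases, while the original Proposition \ref{X in T} handles the generic strata. Verifying that each degenerate locus either lies in the closure of one of the four components already listed or has dimension strictly smaller than $11$ is the only delicate step, and I expect to carry it out exactly as in the proofs of Theorems \ref{M(-1,2,4)} and \ref{M(-1,2,2)}.
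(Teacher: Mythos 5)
Your proposal follows the same route as the paper's proof: the same assembly of earlier results to certify the four components, the same exclusion of $\mult Q_E=2$, the same Euler-characteristic computation forcing $(r,s)\in\{(1,0),(0,1),(-1,2)\}$ in the $\mult Q_E=1$ case, and the same appeals to Propositions \ref{P(E)=X}, \ref{X(-1,1,1,-1,2)} and \ref{X in T} plus Lemma \ref{nonemptyfamily} to absorb each stratum into one of the four components or dismiss it as too small. The "delicate step" you flag is handled in the paper exactly as you anticipate, so the proposal is correct and essentially identical in approach.
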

\begin{proof}
By \cite[Proposition 4.1]{H1978}, $\overline{\calb(-1,2)}$ is 
an irreducible component of $\calm(-1,2,0)$ of dimension $11$. 
Consider $[E]\in \calm(-1,2,0) \setminus \calb(-1,2)$; again,  
By Proposition \ref{ChernClasses}, either $\dim Q_E=1$ and 
$\leq\mult Q_E \leq 2$, or $\dim Q_E=0$. We will study the 
possibilities for $\dim Q_E$ and $\mult Q_E$. \\
i) If $\dim Q_E=1$, $\mult Q_E = 2$, then 
$c_2(E^{\vee\vee})=0$, 
and by \eqref{c3 le} $c_3(E^{\vee\vee})=0$. Thus $E^{\vee 
\vee}$ 
is a stable rank 2 vector bundle with $c_1(E^{\vee \vee})=-1,\ 
c_2(E^{\vee \vee})=0$, contrary to \cite[Cor. 3.5]{H1978}.\\
ii) If $\dim Q_E=\mult Q_E = 1$, then $c_2(E^{\vee \vee})=1$. 
Hence $Q_E$ is supported on a line $i:l\into \PP$ and, 
possibly, isolated points and fits in the exact sequence 
\ref{extensionQ_E}, 
where $\dim Z_E\le0,\ \mathrm{length}Z_E=s$. Then the second 
formula \eqref{chi(Q_E(t))} and Proposition 
\ref{ChernClasses}.b) 
yield $c_3(E^{\vee \vee})=2(r+s)-1$. This
together with \eqref{c3 le} implies that $0\le c_3(E^{\vee\vee})
=2(r+s)-1\le1$. Hence, $c_3(E^{\vee\vee})=r+s=1$, i. e. 
$[E^{\vee\vee}]\in\calm(-1,1,1)$. Since we have an epimorphism 
$E^{\vee \vee} \overset{\delta}{\twoheadrightarrow}Q_E$, it 
follows from \eqref{splitingF} that $r\ge-1$. This together 
with the inequality $s\ge0$ implies that the  possible values 
for $r$ 
and $s$ are: $r=1$ and $s=0$, or $r=0$ and $s=1$, or $r=-1$ and 
$s=2$. Consider these three cases.\\
Case ii.1): $r = 1$ and $s = 0$. In this case, $Q_E \simeq 
i_{*}\mathcal{O}_{l}(1)$. If $l\cap\sing(E^{\vee\vee}) = 
\emptyset$, then by definition $[E]\in\mathcal{X}(-1,1,1,1,0)$, 
that is $E$ is a generic sheaf in $\mathrm{X}(-1,1,1,1,0)$. 
Here by Theorem \ref{NewComponentsmixed}.(i) 
$\mathrm{X}(-1,1,1,1,0)$ 
is the irreducible component of dimension $11$ in 
$\calm(-1,2,0)$. If $l \cap \sing(E) \neq  \emptyset$, 
then  by the Lemma \ref{nonemptyfamily} the family of all such 
$E$ cannot constitute an irreducible component of $\calm
(-1,2,0)$.\\  
Case ii.2): $r=0$ and $s=1$. In this case, the triple
\eqref{extensionQ_E} is:
$0\to\mathcal{O}_p\to Q_E\xrightarrow{\mathrm{can}}
i_{*}\mathcal{O}_{l}\to0$, where $p$ is some point in $\PP$.
This together with the Snake Lemma implies that the sheaf 
$\mathcal{E}$ defined as the kernel of the composition 
$E^{\vee\vee}\overset{\mathrm{can}}{\twoheadrightarrow}Q_E
\overset{\delta}{\twoheadrightarrow}i_{*}\mathcal{O}_{l}$ fits 
in the exact triple $0\to E\to\mathcal{E}\to\calo_p\to0$.
This triple and the stability of $E$ implies the stability of
$\mathcal{E}$, hence $[\mathcal{E}]\in\calm(-1,2,2)$. Since
$\dim\sing(E)=1$, we have by definition that $[E]\in
X(-1,1,1,0,1)$. From Theorem \ref{X in T}.(iii) it follows now 
that $[E]\in\mathrm{T}(-1,2,2,1)$.\\
Case ii.3): $r=-1$ and $s=2$. In this case, $Q_E$ 
fits into an exact sequence of the form:
$0 \to Z_E \to Q_E \to i_{*}\mathcal{O}_{l}(-1) \to 0$,
where $Z_E$ has length $2$ and where  $i:l\into\PP$ is the 
embedding of some line $l$. Therefore, by Proposition 
\ref{X(-1,1,1,-1,2)}, $[E]\in\mathrm{X}(-1,1,1,-1,2)$. Then 
from Proposition \ref{X in T}(ii) it follows that $[E]\in
\mathrm{T}(-1,2,4,2)$.

iii) If $\dim Q_E = 0$, let $s = \mathrm{length}(Q_E)$, since 
we are assuming that $E$ is properly torsion free, it follows 
that $s>0$. By Proposition \ref{ChernClasses}, $c_2(E) = 
c_2(E^{\vee \vee})$ and and  $c_3(E^{\vee \vee}) = 
c_3(E)+2s$. Therefore, 
either $s=1$, $c_3(E^{\vee \vee})=2$, or $s=2$, and 
$c_3(E^{\vee \vee})=4$. Consider both these cases.\\
Case 1: $s= 1$, then $c_3(E^{\vee \vee})=2$. In this case $Q_E 
\simeq\mathcal{O}_p$ for some point $p\in\PP$, so that  
$[E]\in\mathrm{T}(-1,2,2,1)$ by Theorem Theorem \ref{T(-1,2,2i,1)}.(i).\\ 
Case 2: $s = 2$, then $Q_E$ has length $2$, and $[E]\in 
\mathrm{T}(-1,2,4,2)$ by Theorem \ref{T(-1,2,4,2)}.

In conclusion, we have proved that $\calm(-1,2,0)= 
\overline{\calb(-1,2)}\cup\mathrm{T}(-1,2,2,1)\cup 
\mathrm{T}(-1,2,4,2)\cup\mathrm{X}(-1,1,1,1,0)$.
Remark also that the rationality of of $\overline{\calb(-1,2)}$ 
is proved in \cite{Hart2}, the rationalty of 
$\mathrm{T}(-1,2,2,1)$ and $\mathrm{T}(-1,2,4,2)$ follows from
Main Theorem \ref{main2}, and the rationality of 
$\mathrm{X}(-1,1,1,1,0)$ also follows from Main Theorem 
\ref{main2} with small additional argument due to the 
elementary transformations of sheaves along the line $l$.
\end{proof}

\begin{Remark}
Meseguer, Sols and Str\o mme proved in \cite{MSS} that 
$\calm(-1,2,0)$ contains, besides $\calb(-1,2)$, at least two 
families of non locally free sheaves  containing sheaves that 
are not limits of locally free sheaves. Zavodchikov then proved 
in \cite{Z} that these families of sheaves form irreducible 
components of dimension 15 and 19; they coincide with the 
components we denoted by $\mathrm{T}(-1,2,2,1)$ and 
$\mathrm{T}(-1,2,4,2)$, respectively. Later, Zavodchikov proved 
in \cite{Za} that $\calm(-1,2,0)$ consists of exactly 4 
irreducible components; this article, however, is only 
available in russian.

We emphasize that our proof of Theorem \ref{M(-1,2,0)} is 
completely independent from the results in \cite{MSS,Z,Za}, 
treating $\calm(-1,2,c_3)$ in a uniform manner for all 3 
possible values of $c_3$. Additionally, it also provides 
further information on the generic element of each component. 
\end{Remark}

\section{Connectedness of the spaces $\calm(-1,2,c_3)$}\label
{Connectedness of M(2)}

Since the space $\calm(-1,2,4)$ is irreducible, it is 
obviously connected. In this section we will prove that the 
spaces $\calm(-1,2,2)$ and $\calm(-1,2,0)$ are also connected.
\begin{Teo}\label{connected1}
The moduli space $\calm(-1,2,2)$ is connected. 
\end{Teo}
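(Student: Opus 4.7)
By Theorem \ref{M(-1,2,2)} the scheme $\calm(-1,2,2)$ is the union of the two irreducible components $\overline{\calr(-1,2,2)}$ and $\mathrm{T}(-1,2,4,1)$, so proving connectedness is equivalent to producing a single closed point lying in their intersection. My plan is to exhibit such a point by constructing a flat one-parameter family $\mathbb{E}$ on $\Sigma = \PP \times \mathbb{P}^1$ of stable torsion free sheaves with Chern classes $(-1,2,2)$ whose general fibre is reflexive, and therefore belongs to $\calr(-1,2,2)$, while the fibre over some distinguished $t_0 \in \mathbb{P}^1$ belongs to $\mathrm{T}(-1,2,4,1)$.

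I would model the construction on the deformation arguments used in Proposition \ref{X in T}(i). Fixing a plane $\mathbb{P}^2 \subset \PP$, a point $q \in \PP \setminus \mathbb{P}^2$, the auxiliary line $L = \{q\} \times \mathbb{P}^1 \subset \Sigma$, and a suitable pencil of conics $D = \{C_t\}_{t \in \mathbb{P}^1}$ in $\mathbb{P}^2$, I would consider an extension of the form
\begin{equation*}
0 \to \mathcal{A} \otimes \cali_{L, \Sigma} \to \mathbb{E} \to \cali_{D, \Sigma} \to 0 ,
\end{equation*}
with $\mathcal{A}$ a suitable line bundle, whose extension class, viewed as a section of the pushforward $\ext^1(\cali_{D,\Sigma}, \mathcal{A}\otimes\cali_{L,\Sigma})$, is selected so that the fibre $E_t = \mathbb{E}|_{\PP \times \{t\}}$ has the desired behaviour. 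For generic $t$ the pencil is chosen so that $E_t$ is torsion free with one non-locally-free singularity at $q$ and the Chern invariants of $\calr(-1,2,2)$, placing it in that component. At $t = t_0$ the conic $C_{t_0}$ is chosen to degenerate (for instance, to split as a union $l_1 \cup l_2$, with the section $s$ arranged so as to vanish on $l_1$ but not on $l_2$, in the style of the proof of Proposition \ref{X in T}(i)); the resulting $E_{t_0}$ then acquires a length-one artinian quotient $Q_{E_{t_0}} \simeq \calo_p$, and its reflexive hull $E_{t_0}^{\vee\vee}$ picks up an additional isolated singularity, yielding $E_{t_0}^{\vee\vee} \in \calr(-1,2,4)$ and hence $[E_{t_0}] \in \mathrm{T}(-1,2,4,1)$ by the description of this component in Theorem \ref{T(-1,2,2i,1)}.

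The main obstacle is the simultaneous Chern-class and singularity-type control at the special fibre. One must arrange the degeneration precisely so that $Q_{E_{t_0}}$ is zero-dimensional of length exactly one, so that $c_3(E_{t_0}) = 2$ and $E_{t_0}$ stays in $\calm(-1,2,2)$ rather than drifting to $\calm(-1,2,0)$, and so that the reflexive hull $E_{t_0}^{\vee\vee}$ acquires exactly one new isolated singular point beyond the one already present for the generic fibre, so that $c_3(E_{t_0}^{\vee\vee})$ jumps from $2$ to $4$, placing $E_{t_0}^{\vee\vee}$ into $\calr(-1,2,4)$ rather than back into $\calr(-1,2,2)$ or into a locally free component. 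Verifying these two equalities requires a careful local computation near the degenerate point of $C_{t_0}$, analogous to but slightly more stringent than the one carried out in Proposition \ref{X in T}, since here the generic fibre is required to be reflexive rather than of mixed type; pinning down the correct local model for the degeneration of $C_{t_0}$ and the extension class is the crux of the argument.
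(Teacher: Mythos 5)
Your overall strategy is the same as the paper's: by Theorem \ref{M(-1,2,2)} it suffices to exhibit one point in $\overline{\calr(-1,2,2)}\cap\mathrm{T}(-1,2,4,1)$, and you propose to do so with a one-parameter flat family whose general fibre is reflexive and whose special fibre lies in $\mathrm{T}(-1,2,4,1)$. However, the concrete family you build does not do this, and the gap is fatal rather than cosmetic. By tensoring the sub-line-bundle with $\cali_{L,\Sigma}$, $L=\{q\}\times\mathbb{P}^1$, you force \emph{every} fibre $E_t$ to sit in an extension $0\to\cali_q\otimes\op3(-1)\to E_t\to\cali_{C_t}\to0$. Since $q\notin\mathbb{P}^2$, the sheaf $\cali_{C_t}$ is locally free near $q$ and the extension splits locally there, so $E_t\simeq\cali_{q,U}(-1)\oplus\calo_U$ on a neighbourhood $U$ of $q$; hence $E_t^{\vee\vee}/E_t\supseteq\calo_q\ne0$ and $E_t$ is \emph{not} reflexive for any $t$. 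Your phrase ``torsion free with one non-locally-free singularity at $q$ \dots placing it in that component'' conflates two different things: a sheaf of $\calr(-1,2,2)$ is reflexive but not locally free at its singular point, whereas the singularity your construction creates at $q$ is a failure of reflexivity. By Theorem \ref{M(-1,2,2)}(iii) every non-reflexive point of $\calm(-1,2,2)$ already lies in $\mathrm{T}(-1,2,4,1)$, so your family never leaves that component and cannot witness the intersection. (Showing that such an elementary transformation at a point is nevertheless a limit of reflexive sheaves is exactly the smoothability-type statement one would still have to prove.) A secondary problem: the degeneration you borrow from Proposition \ref{X in T}(i) --- the conic splitting as $l_1\cup l_2$ with the extension class vanishing on $l_1$ --- produces a quotient $Q_{E_{t_0}}$ containing the one-dimensional piece $\calo_{l_1}(-1)$, so $c_2(E_{t_0}^{\vee\vee})=1$ and $E_{t_0}^{\vee\vee}\in\calm(-1,1,1)$, not the artinian $Q_{E_{t_0}}\simeq\calo_p$ with $E_{t_0}^{\vee\vee}\in\calr(-1,2,4)$ that you assert.

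The fix, which is what the paper does, is to discard the auxiliary point $q$ altogether and move the degeneration into the curve. Take a flat family of curves $Z_t$ where $Z_t=l_{1t}\sqcup l_{2t}$ is a pair of disjoint lines for $t\ne0$ and $Z_0$ is a pair of incident lines whose flat limit carries an embedded point at the node $p$, i.e. $0\to\calo_p\to\calo_{Z_0}\to\calo_{(Z_0)_{red}}\to0$. The relative Serre extension $0\to\op3(-1)\to E_t\to\cali_{Z_t,\PP}\to0$ then has $[E_t]\in\calr(-1,2,2)$ for $t\ne0$ by \cite[Lemma 2.4]{Chang}, so $[E_0]\in\overline{\calr(-1,2,2)}$, while the embedded point yields $0\to E_0\to E_0^{\vee\vee}\to\calo_p\to0$ with $E_0^{\vee\vee}$ stable (via the induced extension by $\cali_{(Z_0)_{red}}$), hence $[E_0]\in\mathrm{T}(-1,2,4,1)$ by Theorem \ref{T(-1,2,2i,1)}. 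This makes the general fibre genuinely reflexive and produces the length-one artinian quotient only at $t=0$, which is precisely the control your version fails to achieve.
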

\begin{proof}
First, remark that one easily constructs a flat family of 
curves $\mathcal{Z}$ in $\PP$ with base $\mathbb{A}^1$, i. e.,
a subscheme $\mathcal{Z}$ of $\PP\times\mathbb{A}^1$ with
the projection $\pi:\ \mathcal{Z}\to\PP\times\mathbb{A}^1 
\xrightarrow{pr_2}\mathbb{A}^1$ satisfying the properties:\\  
a) for $t\in\mathbb{A}^1\smallsetminus\{0\}$, the fiber $Z_t 
:=\pi^{-1}(t)$ is a disjoint union $l_{1t}\sqcup l_{2t}$ of two lines;\\
b) the zero fiber $Z_0:=\pi^{-1}(0)$ as a set is the union of 
two distinct lines $l_{10}$ and $l_{20}$ meeting at a point, 
say, $p$ such that $(Z_0)_{red}=l_{10}\cup l_{20}$ and $Z_0$ as a 
scheme has $p$ as an embedded point; more precisely, there is 
an exact triple:
\begin{equation}\label{pointp} 
0\to\calo_p\to\calo_{Z_0}\to\calo_{(Z_0)_{red}} \to 0. 
\end{equation}
Indeed, to construct the family $\mathcal{Z}$, consider the
projective space $\mathbb{P}^4$ with coordinates $(u:x:y:z:w)$
and the affine line $\mathbb{A}^1$ with coordinate $t$. In
$\mathbb{P}^4$ consider a reduced subscheme $W$ given by the 
equations $\{xz=xw=yz=yw=0\}$ (This $W$ is just a union of 
two projective planes in $\mathbb{P}^4$ intersecting at the 
point $\tilde{p}=(1:0:0:0:0)$.) Next, in $\mathbb{P}^4\times
\mathbb{A}^1$ take a divizor $D=\{tu=x+y+z+w\}$, and let 
$\tilde{\mathcal{Z}}=D\cap W\times\mathbb{A}^1$. Furthermore, 
in $\mathbb{P}^4$ take a hyperplane $\PP_0=\{x+y+z+w=0\}$, 
fix some isomorphism $f:\ D\xrightarrow{\simeq}\PP_0\times
\mathbb{A}^1\simeq\PP\times\mathbb{A}^1$ and set $p=f(\tilde
{p})$. (For instance, one can take for $f$ a morphism 
$((u:x:y:z:w),t)\mapsto((u:x:y:z:-(x+y+z)),t)$. Then
the subscheme $\mathcal{Z}=f(\tilde{\mathcal{Z}})$ satisfies 
the above properties a) and b).

Let $p_2:\PP\times\mathbb{A}^1\to\mathbb{A}^1$ be the 
projection. One checks that, for $t\in\mathbb{A}^1$, 
$\ext^1(\cali_{Z_t},\op3(-1))$ has fixed dimension 4 while the
higher  Ext-groups of this pair vanish, hence the base change
for relative Ext-sheaves (see, e. g., \cite[Thm. 1.4]{L}) shows
that the sheaf $\mathcal{A}=\lext^1_{p_2}(\cali_{\mathcal{Z},\PP
\times{\mathbb{A}^1}},\mathcal{O}_{\PP}(-1)\boxtimes\calo_
{\mathbb{A}^1})$ is a locally free $\calo_
{\mathbb{A}^1}$-sheaf and there exists a nowhere vanishing 
section $s\in\h^0(\mathcal{A})$. Furthermore, by the spectral
sequence of global-to-relative Ext we may consider $s$ as an
element of the group $\ext^1_{p_2}(\cali_{\mathcal{Z},\PP
\times{\mathbb{A}^1}},\mathcal{O}_{\PP}(-1)\boxtimes\calo_
{\mathbb{A}^1})$. This element defines an extension of $\calo_
{\PP\times\mathbb{A}^1}$-sheaves 
\begin{equation}\label{flat over A1}
0\to\mathcal{O}_{\PP}(-1)\boxtimes\calo_{\mathbb{A}^1}\to
\mathbf{E}\to \cali_{\mathcal{Z},\PP\times{\mathbb{A}^1}}\to0.
\end{equation}
The sheaf $\mathbf{E}$ is flat over $\mathbb{A}^1$and, by 
construction, for $t\in \mathbb{A}^1$, the restriction of
\eqref{flat over A1} is a nonsplitting extension of 
$\op3$-sheaves $0\to\mathcal{O}_{\PP}(-1)\to E_t\to\cali_{Z_t,
\PP}\to0$, where $E_t:=\mathbf{E}|_{\mathbb{P}^3\times\{t\}}$.
Hence, $[E_t]\in\calm(-1,2,2)$, i. e., we obtain a modular
morphism $\Phi:\ \mathbb{A}^1\to\calm(-1,2,2),\ \ t\mapsto
[E_t]$.

Note that, for $t\ne0$, $[E_t]\in\calr(-1,2,2)$ by \cite
[Lemma 2.4]{Chang}, i. e. $\Phi(\mathbb{A}^1\setminus\{0\})
\subset\calr(-1,2,2)$. It follows that $[E_0]\in\overline
{\calr(-1,2,2)}$. Besides, $E_0$ fits the exact sequence
\begin{equation}\label{E0}
0\to\mathcal{O}_{\PP}(-1)\xrightarrow{r}E_0\to\cali_{Z_0,\PP}
\to0.
\end{equation}
From \eqref{pointp} and \eqref{E0} we deduce the following 
exact sequences:
\begin{equation}\label{E0p}
0\to E_0\to E_0^{\vee\vee}\to\mathcal{O}_p\to 0,
\end{equation}
\begin{equation}\label{stability}
0\to \mathcal{O}(-1) \stackrel{s}{\to} E_0^{\vee\vee}
\to \cali_{(Z_0)_{red},\PP}\to 0.
\end{equation}
where $s$ is the composition morphism $r$ in the sequece 
\eqref{E0} with the canonical monomorphism $E_0\to E_0^{\vee
\vee}$. From the sequence \eqref{stability} and \cite
[Proposition 4.2]{Harshorne-Reflexive} we conclude that 
$E_0^{\vee \vee}$ is stable. Moreover, by \eqref{E0p} and 
Theorem \ref{T(-1,2,2i,1)}.(ii), $[E_0]\in\mathrm{T}(-1,2,4,
1)$. This yields the proof since, by Theorem \ref{M(-1,2,2)}, 
$\calm(-1,2,2)=\overline{\calr(-1,2,2)}\cup\mathrm{T}
(-1,2,4,1)$.
\end{proof}

\begin{Teo}\label{connected2}
The moduli space $\calm(-1,2,0)$ is connected.
\end{Teo}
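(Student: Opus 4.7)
The strategy follows that of Theorem \ref{connected1}: I would construct explicit flat 1-parameter families of torsion free sheaves in $\calm(-1,2,0)$ whose general and special fibers lie in distinct irreducible components, so as to produce non-empty intersections between those components. By Theorem \ref{M(-1,2,0)}, there are exactly four irreducible components of $\calm(-1,2,0)$, namely $\overline{\calb(-1,2)}$, $\mathrm{X}(-1,1,1,1,0)$, $\mathrm{T}(-1,2,2,1)$, and $\mathrm{T}(-1,2,4,2)$; it suffices to link these four into a connected graph of pairwise intersections, since each component is itself irreducible (hence connected).

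The main engine is a Serre-type construction over an affine or projective base $B$. For a flat family $\mathcal{Z} \subset \PP \times B$ of 1-dimensional subschemes of degree $2$ and Euler characteristic $\chi(\calo_{\mathcal{Z}_t}) = 3$ (the value forced by $c_3(E) = 0$), one builds an extension
$$ 0 \to \calo_\PP(-1) \boxtimes \calo_B \to \mathbf{E} \to \cali_{\mathcal{Z}, \PP \times B} \to 0 $$
from a nowhere-vanishing section of the corresponding relative Ext-sheaf, exactly as in the proof of Theorem \ref{connected1}. Varying the degeneration type of $\mathcal{Z}_t$ (two disjoint lines plus an isolated point colliding with one line; a pair of skew conics specializing to reducible ones with embedded structure; isolated points approaching the curve or each other) the fiber $E_t = \mathbf{E}|_{\PP \times \{t\}}$ moves between different components. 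At each special fiber, the double dual $E_0^{\vee\vee}$ and the quotient $Q_{E_0} = E_0^{\vee\vee}/E_0$ are computed directly from exact sequences in the spirit of \eqref{pointp}, \eqref{E0p}, and \eqref{stability}, and the resulting $[E_0]$ is placed in the appropriate component via Theorems \ref{T(-1,2,2i,1)} and \ref{T(-1,2,4,2)} and the description of $\mathrm{X}(-1,1,1,1,0)$ from Section \ref{descr of X}.

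I would chain such families so as to connect each of $\mathrm{T}(-1,2,2,1)$, $\mathrm{T}(-1,2,4,2)$, and $\mathrm{X}(-1,1,1,1,0)$ to either $\overline{\calb(-1,2)}$ or to one of the other ``large'' components: creating an isolated point in the limit of a smooth Serre curve connects $\overline{\calb(-1,2)}$ to $\mathrm{T}(-1,2,2,1)$; merging two such limit points produces an intersection of $\mathrm{T}(-1,2,2,1)$ with $\mathrm{T}(-1,2,4,2)$; and degenerating a Serre curve into a reducible one with a line component, while simultaneously letting the singularity of the reflexive hull $E^{\vee\vee}$ merge with that line, links $\overline{\calb(-1,2)}$ or $\mathrm{T}(-1,2,2,1)$ to $\mathrm{X}(-1,1,1,1,0)$. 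As a by-product, these families exhibit smoothability of generic sheaves in $\mathrm{T}(-1,2,2,1)$ and $\mathrm{T}(-1,2,4,2)$, which is the basis for the smoothability remarks the authors mention after the proof.

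The hardest step will be the intersection involving $\mathrm{X}(-1,1,1,1,0)$. Unlike the other three components, whose generic double duals live in $\calr(-1,2,m)$ for $m \in \{0,2,4\}$, the reflexive hull of a generic sheaf in $\mathrm{X}(-1,1,1,1,0)$ is in $\calr(-1,1,1)$, i.e.\ has strictly smaller $c_2$. A connecting specialization must therefore be accompanied by a jump of $c_2(E_t^{\vee\vee})$ from $1$ to $2$, accomplished by absorbing a piece of the 1-dimensional quotient $i_*\calo_l(1)$ into the reflexive hull; ensuring flatness of the resulting family, stability of every member, and correct identification of the limit sheaf in the partner component is the most delicate portion of the argument.
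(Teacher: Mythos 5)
Your overall plan --- reduce to Theorem \ref{M(-1,2,0)} and exhibit explicit flat one-parameter families linking the four components pairwise --- is exactly the paper's, and your first two links are essentially the ones used there: a Serre-type extension $0\to\op3(-2)\to E\to\cali_Z(1)\to0$ over a pencil in which a disjoint pair of conics degenerates to a pair meeting in one (resp.\ two) points with one (resp.\ two) embedded points shows that $\overline{\calb(-1,2)}$ meets $\mathrm{T}(-1,2,2,1)$ (resp.\ $\mathrm{T}(-1,2,4,2)$). (Your side remark that ``merging two such limit points'' would give $\mathrm{T}(-1,2,2,1)\cap\mathrm{T}(-1,2,4,2)\ne\emptyset$ is not needed for connectedness and is also suspect: a length-two quotient forces $c_3(E^{\vee\vee})=4$, so such a sheaf is not visibly in $\mathrm{T}(-1,2,2,1)$.)

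The genuine gap is the step you yourself single out as hardest: attaching $\mathrm{X}(-1,1,1,1,0)$ to the rest. You propose to do this by letting $c_2(E_t^{\vee\vee})$ jump from $1$ to $2$ inside the connecting family, i.e.\ by absorbing part of $i_*\calo_l(1)$ into the reflexive hull, and you leave the flatness and limit-identification issues open. The paper does the opposite and avoids any jump of the double dual: fix $[F]\in\calr(-1,1,1)$ and a line $l$ with $l\cap\sing(F)=\emptyset$, pick two points $p_1,p_2\in l$, and perform the elementary transformation of $F\boxtimes\calo_{\mathbb{A}^1}$ along $\cali_{\tilde p_1\sqcup\tilde p_2,\,l\times\mathbb{A}^1}\otimes\calo_l(1)\boxtimes\calo_{\mathbb{A}^1}$. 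The hull stays equal to $F$ for every $t$; only the quotient degenerates, from $i_*\calo_l(1)$ for $t\ne0$ to $i_*\calo_l(-1)\oplus\calo_{p_1}\oplus\calo_{p_2}$ at $t=0$, so that $[E_0]\in\mathcal{X}(-1,1,1,-1,2)$, and Proposition \ref{X in T}(ii) (established earlier by a separate degeneration argument) places $\mathrm{X}(-1,1,1,-1,2)$ inside $\mathrm{T}(-1,2,4,2)$. Without this idea --- degenerate the quotient along the line rather than the reflexive hull, and target $\mathrm{T}(-1,2,4,2)$ through the inclusion $\mathrm{X}(-1,1,1,-1,2)\subset\mathrm{T}(-1,2,4,2)$ --- your outline does not yet produce the decisive fourth intersection, so the connectedness claim remains unproved as written.
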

\begin{proof}
By Theorem \ref{M(-1,2,0)}, 
$$
\calm(-1,2,0)=\overline{\calb(-1,2)}\cup\mathrm{T}(-1,2,2,
1)\cup\mathrm{T}(-1,2,4,2)\cup\mathrm{X}(-1,1,1,1,0).
$$
We are going to prove that: (i) the component $\overline{
\calb(-1,2)}$ intersects the components $\mathrm{T}(-1,2,2,
1)$ and $\mathrm{T}(-1,2,4,2)$; (ii) the component $\mathrm{X}
(-1,1,1,1,0)$ intersects the component $\mathrm{T}(-1,2,4,2
)$. This will imply the connectedness of $\calm(-1,2,0)$. 

(i) By \cite[Example 3.1.2]
{Harshorne-Reflexive}, the generic sheaf $[E]\in\calb(-1,2)$ 
fits into an exact triple of the form
\begin{equation}\label{conicextension}
0 \to \mathcal{O}_{\PP}(-2) \to E \to \cali_{Z}(1) \to 0 ~~,
\end{equation}
where $I_Z$ is the ideal sheaf of a disjoint union of two 
conics in $\PP$. The proof is similar to the proof of Theorem 
\ref{connected1}, with minor changes, that we will include 
here for completeness. 

Consider the following two $1$-dimensional flat families of 
curves $\mathcal{Z}^1$ and $\mathcal{Z}^2$ in $\PP$, with 
base $U$ open subset in $\mathbb{A}^1$ containing the point 
$0$, and with projections $\pi_i: \mathcal{Z}^i\hookrightarrow
\PP\times U\xrightarrow{pr_2}U$, $i=1,2,$ such that $\mathcal
{Z}^i$ satisfies the conditions ($a$) and ($b_i$), $i=1,2$, 
where:\\
($a$) for $t\ne0$, the fiber $Z_t^i:=\pi_i^{-1}(t)$ 
is a disjoint union of  two conics;\\
($b_1$) the fiber $Z^1_0$ at $0$ as a set is the union of two 
smooth conics, $C_1$ and $C_2$ meeting in a unique point, say 
$p$, i. e., $(Z_0^1)_{red}=C_1\cup C_2$ and $p=C_1\cap C_2$, 
and as a scheme $Z_0^1$ has an embedded point $p$ such that
the following triple is exact:
\begin{equation}\label{pointpnovo} 
0\to \calo_p\to\calo_{Z_0^1}\to\calo_{(Z_0^1)_{red}}\to0. 
\end{equation}
($b_2$) the fiber $Z^2_0$ at $0$ is a union of two conics, 
$C_1$ and $C_2$ meeting in two distinct fat points of 
multiplicity $2$. That is, $(Z_0^2)_{red} =C_1\cup C_2 $, 
and $\{p_1,p_2\}=C_1\cap C_2$, and as scheme $Z_0^2$ 
has two embedded points $p_1$ and $p_2$:
\begin{equation}\label{2pointp} 
0\to\calo_{p_1}\oplus\calo_{p_2}\to\calo_{Z_0^2}\to
\calo_{(Z_0^2)_{red}}\to0. 
\end{equation}
Now similarly to \eqref{flat over A1} we obtain the exact 
triples:
\begin{equation}\label{flat over U}
0\to\mathcal{O}_{\PP}(-2)\boxtimes\calo_U\to
\mathbf{E}^i\to \cali_{\mathcal{Z}^i,\PP\times U}\otimes
\mathcal{O}_{\PP}(1)\boxtimes\calo_U\to0,\ \ \ \ 
i=1,2,
\end{equation}
such that, for $t\in U$, the restriction of
\eqref{flat over U} is a nonsplitting extension of 
$\op3$-sheaves $0\to\mathcal{O}_{\PP}(-2)\to E^i_t\to 
\cali_{Z^i_t,\PP}(1)\to0$, where 
$E^i_t:=\mathbf{E}^i|_{\mathbb{P}^3\times\{t\}}$.
Hence, $[E^i_t]\in\calm(-1,2,0)$, i. e., we obtain modular
morphisms $\Phi_i:\ U\to\calm(-1,2,0),\ \ t\mapsto[E^i_t]$.
Note that, for $t\ne0$,  each $[E_t^i]\in\calb(-1,2)$ by 
\cite[Example 3.1.2]{H1978}. Hence, also  $[E_0^i]\in\overline
{\calb(-1,2)}$, $i=1,2$. Besides, $E_0^i$ fit in the following 
exact triples:
\begin{equation}\label{E0i}
0 \to \mathcal{O}_{\PP}(-2) \stackrel{r_i}{\to} E_0^i\to 
\cali_{Z_0^i,\PP}(1)\to0,\ \ \ i=1,2.
\end{equation}
The triples \eqref{pointpnovo},\eqref{2pointp} and 
\eqref{E0i}, yield the following exact sequences:
\begin{equation}\label{E0p1}
0\to E_0^1\to (E_0^1)^{\vee\vee}\to\mathcal{O}_p\to 0,
\end{equation}
\begin{equation}\label{E0p2}
0\to E_0^2\to (E_0^2)^{\vee\vee}\to \mathcal{O}_{p_1} 
\oplus \mathcal{O}_{p_2} \to 0,
\end{equation}
\begin{equation}\label{stabilityi}
0\to \mathcal{O}(-2) \stackrel{s_i}{\to} (E_0^i)^{\vee\vee}
\to \cali_{(Z_0^i)_{red},\PP}(1)\to 0,\ \ \ \ \ i=1,2,
\end{equation}
where $s_i$ is the composition morphism $r_i$ from 
\eqref{E0i} with the canonical monomorphism $E_0^i\to(E_0^i)^
{\vee\vee}$. From sequence 
\eqref{stabilityi} and \cite[Proposition 4.2]
{Harshorne-Reflexive} we conclude that $(E_0^i)^{\vee\vee}$ 
is stable, i. e. $[(E_0^i)^{\vee\vee}]\in\calm(-1,2,2i)$, 
$i=1,2$. Thus, \eqref{E0p1} and Theorem \ref{M(-1,2,0)}.(c) 
yield $[E_0^1]\in\mathrm{T}(-1,2,2,1)$; respectively, 
\eqref{E0p2} and Theorem \ref{M(-1,2,0)}.(d) yield $[E_0^2] 
\in\mathrm{T}(-1,2,4,2)$. Since, by the above, $[E_0^1],
[E_0^2]\in\overline{\calb(-1,2)}$, $i=1,2$, it follows, that 
$\overline{\calb(-1,2)}\cap\mathrm{T}(-1,2,2,1)\ne
\emptyset$ and $\overline{\calb(-1,2)}\cap\mathrm{T}
(-1,2,4,2)\ne\emptyset$, as stated. 

(ii) Fix a sheaf $[F]\in\calr^*(-1,1,1)$, a line $l$ in
$\PP$ such that $l\cap\sing(F)=\emptyset$, and two distinct 
points $p_1,p_2\in l$. Consider the surface $S=l\times\mathbb
{A}^1$ with the projection $pr _2:S\to\mathbb{A}^1$. The points 
$p_1,p_2$ define two points $\tilde{p}_i=(p_i,0)\in 
pr_2^{-1}(0),\ i=1,2$. Since $F|_l\cong\calo_l\oplus
\calo_l(-1)$ (see \eqref{splitingF}), it follows that there
exists an epimorphism $\mathbf{e}:F\boxtimes\calo_{\mathbb
{A}^1}\twoheadrightarrow\calb:=\cali_{\tilde{p}_1\sqcup
\tilde{p}_2,S}\otimes\calo_l(1)\boxtimes\calo_{\mathbb{A}^1}$.
Consider an $\calo_{\PP\times\mathbb{A}^1}$-sheaf $\mathbf{E}=
\ker\mathbf{e}$, flat over $\mathbb{A}^1$ and, for $t\in
\mathbb{A}^1$, set $E_t:=\mathbf{E}|_{\PP\times\{t\}}$. By 
construction, the restriction of the exact triple $0\to
\mathbf{E}\to F\boxtimes\calo_{\mathbb{A}^1}\to\calb\to0$ 
onto $\PP\times\{t\}$ yields the exact sequences
\begin{equation}\label{seq 1}
0\to E_t\to F\to\calo_l(1)\to0,\ \ \ \ \ \ t\in\mathbb{A}^1
\smallsetminus\{0\},
\end{equation}
\begin{equation}\label{seq 2}
0\to E_0\to F\to\calo_l(-1)\oplus\calo_{p_1}\oplus\calo_{p_2}
\to0,
\end{equation}
and there is a modular morphism $\Psi:\mathbb{A}^1\to
\calm(-1,2,0),\ t\mapsto[E_t]$. By the definition of 
$\mathcal{X}(-1,1,1,1,0)$ and $\mathrm{X}(-1,1,1,1,0)$ 
(see \eqref{familyX} and \eqref{def calX}), it follows from 
\eqref{seq 1} that $[E_t]\in\mathcal{X}(-1,1,1,1,0)$ for 
$t\in\mathbb{A}^1\smallsetminus\{0\}$, i. e. $\Psi(\mathbb
{A}^1\smallsetminus\{0\})\subset \mathcal{X}(-1,1,1,1,0)
\subset\mathrm{X}(-1,1,1,1,0)$. Hence, $[E_0]\in\mathrm{X}
(-1,1,1,1,0)$. On the other hand, by the definition of 
$\mathcal{X}(-1,1,1,-1,2)$, $[E_0]\in\mathcal{X}(-1,1,1,-1,2)
\subset\mathrm{X}(-1,1,1,-1,2)$. Since by Theorem \ref{X in 
T}.(ii) $\mathrm{X}(-1,1,1,-1,2)$ lies in $\mathrm{T}
(-1,2,4,2)$, it follows that $\mathrm{X}(-1,1,1,1,0)\cap
\mathrm{T}(-1,2,4,2)\ne\emptyset$. 
\end{proof}

The first part of the previous proof can also be regarded as a 
proof of the following claim. Let $E$ be a stable torsion free 
sheaf with $(c_1(E),c_2(E),c_3(E))=(-1,2,0)$ such that
\begin{itemize}
\item[(i)] there exists a nontrivial section in 
$H^0(E^{\vee\vee}(2))$ that vanishes along the union of two 
conics intersecting in a point $p$, and
\item[(ii)] $E^{\vee\vee}/E=\mathcal{O}_p$.
\end{itemize}
Then $E$ is smoothable. Indeed, these two hypotheses imply that 
$E$ fits into the following exact sequence
$$ 0 \to \op3(-2) \to E \to I_Z(1) \to 0, $$
where $Z$ coinciding the scheme $Z^1_0$ described in item 
($b_1$) above. Deforming $Z$ into a union of disjoint conics, 
we obtain a deformation of $E$ into a locally free sheaf $F$ 
with $[F]\in\calb(-1,2)$.

Similarly, if $E$ satisfies the following two hypotheses
\begin{itemize}
\item[(i')] there exists a nontrivial section in 
$H^0(E^{\vee\vee}(2))$ that vanishes along the union of two 
conics intersecting in two points $p$ and $q$;
\item[(ii')] $E^{\vee\vee}/E=\mathcal{O}_p\oplus\mathcal{O}_q$,
\end{itemize}
then $E$ is smoothable. 

\end{document}